\documentclass[11pt]{article}
\usepackage[T1]{fontenc}
\usepackage{lmodern}
\usepackage{amsmath,amssymb,amsthm,mathtools,mathrsfs}
\usepackage[a4paper,margin=28mm]{geometry}
\usepackage{microtype,booktabs,array}
\usepackage{graphicx,tikz}
\usetikzlibrary{arrows.meta,positioning}
\usepackage[colorlinks=true,linkcolor=blue!45!black,citecolor=blue!45!black,urlcolor=blue!45!black]{hyperref}
\usepackage{enumitem}
\setlist{itemsep=3pt,topsep=5pt}
\numberwithin{equation}{section}
\newtheorem{theorem}{Theorem}[section]
\newtheorem{proposition}[theorem]{Proposition}
\newtheorem{lemma}[theorem]{Lemma}
\newtheorem{corollary}[theorem]{Corollary}
\theoremstyle{definition}
\newtheorem{definition}[theorem]{Definition}
\newtheorem{example}[theorem]{Example}
\newtheorem{remark}[theorem]{Remark}
\DeclareMathOperator{\Ind}{Ind}

\DeclareMathOperator{\Irr}{Irr}
\DeclareMathOperator{\Lin}{Lin}
\DeclareMathOperator{\Supp}{Supp}
\DeclareMathOperator{\Stab}{Stab}
\DeclareMathOperator{\sgn}{sgn}
\DeclareMathOperator{\rank}{rank}
\DeclareMathOperator{\Hom}{Hom}
\DeclareMathOperator{\GL}{GL}
\DeclareMathOperator{\AGL}{AGL}
\DeclareMathOperator{\Sp}{Sp}
\DeclareMathOperator{\ch}{ch}
\DeclareMathOperator{\Ker}{Ker}
\newcommand{\Z}{\mathbb Z}
\newcommand{\Q}{\mathbb Q}
\newcommand{\C}{\mathbb C}
\newcommand{\F}{\mathbb F}
\newcommand{\A}{\mathcal A}
\newcommand{\HH}{\mathcal H}
\newcommand{\QQ}{\mathfrak Q}
\newcommand{\ee}{\varepsilon}
\newcommand{\dd}{\delta}
\newcommand{\om}{\omega}
\newcommand{\emptypar}{\varnothing}

\newcommand{\row}{\mathrm{row}}
\newcommand{\col}{\mathrm{col}}

\newcommand{\tr}{\mathrm{tr}}

\newcommand{\one}{\mathbf 1}
\newcommand{\transpose}{\mathsf t}
\newcommand{\pair}{\mathsf P}
\newcommand{\diagauto}{\diamond}
\title{Quasiparabolic Gelfand models\\
for finite irreducible Coxeter groups\\
and their canonical Hecke structures}
\author{Yifeng Zhang
\\
School of Mathematical Sciences \\
South China Normal University \\
{\tt calvinz314159\@gmail.com}}
\date{}
\hypersetup{pdftitle={Quasiparabolic Gelfand models for finite irreducible Coxeter groups and their canonical Hecke structures},pdfauthor={Yifeng Zhang}}
\begin{document}
\begingroup\renewcommand{\@}{\char64\relax}\maketitle\endgroup
\begin{abstract}
Quasiparabolic sets extend parabolic coset spaces while retaining a length filtration and natural Hecke algebra deformations.
Gelfand models built from induced linear characters ask when such spaces can account for every irreducible representation exactly once.
We classify, up to equality of the individual induced characters, all quasiparabolic Gelfand models for finite irreducible Coxeter groups with inducing characters restricted from their ambient parabolic subgroups, including the additional models of type \(D_{4r+2}\) and \(B_3\).
We derive the associated finite Gelfand-pair and commutant consequences, construct canonical Hecke structures for the additional classical models.
\end{abstract}
\noindent\textit{Keywords:} Coxeter group; quasiparabolic set; Gelfand model; Gelfand pair; Hecke algebra; canonical basis; \(W\)-graph.

\section{Introduction}
\label{sec:intro}

A Gelfand model of a finite group is a complex representation containing each irreducible representation with multiplicity one.
The problem of constructing such a representation from simple combinatorial data has a substantial history.
Klyachko's models for finite general linear groups \cite{Klyachko} and the explicit symmetric-group model of Inglis, Richardson, and Saxl \cite{IRS} are basic examples.
Baddeley \cite{Baddeley} and Vinroot \cite{Vinroot} studied models arising from involutions in Weyl and Coxeter groups.
Combinatorial constructions for symmetric groups, wreath products, and classical Weyl groups were developed by Adin, Postnikov, and Roichman \cite{APR,APRwreath}, Caselli and Fulci \cite{CaselliFulci}, and Araujo and Bratten \cite{AraujoBratten}.
These constructions connect induction of linear characters with the partition combinatorics of irreducible representations.
We use the symmetric-function conventions of Macdonald \cite{MacdonaldBook} and the representation-theoretic conventions of Geck and Pfeiffer \cite{GeckPfeiffer}.

At the level of a single inducing term, the relevant notion is a finite Gelfand pair or, for a nontrivial inducing character, a linear Gelfand triple.
The associated double-coset algebra is commutative exactly when the induced character is multiplicity-free; its primitive idempotents and spherical functions encode the irreducible support \cite{StembridgeQ,CSST}.
Godsil and Meagher classified the multiplicity-free permutation representations of symmetric groups \cite{GodsilMeagher}, while Turek classified the subgroups admitting multiplicity-free induction from an irreducible character in sufficiently large degree and determined most possible inducing characters \cite{Turek}.
For Coxeter groups, Abramenko, Parkinson, and Van Maldeghem classified the commutative parabolic Hecke algebras \cite{APVM}.
These results provide necessary candidate filters and independent comparisons below, but the present problem also permits nonparabolic QP stabilizers and imposes an exact cover of all irreducible characters.

Hecke algebras provide a second source of structure.
Kazhdan and Lusztig \cite{KL} introduced their canonical basis and \(W\)-graphs, while Deodhar \cite{Deodhar} developed the parabolic analogue.
Howlett and Yin \cite{HY1,HY2} established induction procedures for \(W\)-graphs.
The involution modules of Lusztig and Vogan \cite{LV} and Lusztig \cite{Lusztig} show that bar operators and canonical bases extend beyond ordinary parabolic permutation modules.
Rains and Vazirani \cite{RV} introduced quasiparabolic sets, a class of scaled Coxeter-group sets with a Bruhat order and two natural Hecke modules.
Lu \cite{Lu} classified quasiparabolic subgroups in finite classical types and proved the existence of bar operators for their ordinary Hecke modules.
Marberg \cite{MarbergBar} established the canonical-basis and \(W\)-graph formalism associated with such bar operators.
The perfect models of Marberg and Zhang \cite{MZgraphs,MZperfect} give a particularly structured source of Gelfand \(W\)-graphs.
Their classification concerns quasiparabolic centralizers of perfect involutions, with inducing characters subject to a prescribed parabolic extension condition.
The present paper keeps that extension condition and allows arbitrary quasiparabolic stabilizers.
Accordingly, their existence results provide the initial families, whereas completeness in the enlarged class requires a further subgroup argument.

Other quasiparabolic subgroups can contribute new induced characters.
Green and Xu \cite{GXroots} constructed quasiparabolic sets of maximal orthogonal positive-root sets, extending the setting of Macdonald's root-product representations \cite{MacdonaldRoots}.
Their subsequent work gives generalized Rothe diagrams and explicit descriptions in exceptional types \cite{GXRothe,GXFano}.
These examples motivate allowing quasiparabolic stabilizers beyond perfect-involution centralizers.
For Hecke realizations, we distinguish a character-level construction from one preserving a specified integral lattice.
Gyoja's existence theorem \cite{Gyoja}, in the formulation of Hahn \cite{Hahn}, supplies \(W\)-graphs over a splitting field; it does not identify the natural induction lattice.
We work throughout with a fixed class of inducing data.

Here is the precise class that we classify.
For a finite Coxeter system \((W,S)\), let \(W_J=\langle J\rangle\), where \(J\subseteq S\), and let \(\Lin(W_J)\) denote its one-dimensional complex characters.
A subgroup \(H\leq W_J\) is \emph{quasiparabolic} when its coset space, with the minimum-length height, satisfies the two axioms recalled in Definition~\ref{def:qp}.
An allowed character is
\[
 \chi_{J,H,\sigma}=\Ind_H^W(\sigma|_H),
 \qquad H\leq W_J\text{ quasiparabolic},\quad \sigma\in\Lin(W_J).
\]
Write \(\Gamma_W=\sum_{\rho\in\Irr(W)}\rho\), where \(\Irr(W)\) is the set of irreducible complex characters.
A \emph{quasiparabolic Gelfand model} is an unordered family of allowed characters whose sum is \(\Gamma_W\).
Two families are equivalent if their individual induced characters agree after a reordering.
Thus the equivalence retains the decomposition into inducing terms.
We write \(\QQ(W)\) for the set of equivalence classes.
The natural lattice of a term is
\(\Z W\otimes_{\Z H}\Z_{\sigma|_H}\), where \(H\) acts on the rank-one factor by \(\sigma|_H\).
The terms bar operator, canonical basis, and \(W\)-graph have the meanings made precise in Section~\ref{sec:prelim}; canonicality always refers to a specified ordered standard basis and bar operator.

\begin{theorem}[Classification and Hecke realization]
\label{thm:main}
For finite irreducible Coxeter groups, the complete numbers of quasiparabolic Gelfand models are as follows:
\[
\begin{array}{c|c}
 W&|\QQ(W)|\\ \hline
 A_1&2\\
 A_3&4\\
 A_{n-1},\ n=3\text{ or }n\geq5&2\\
 B_3&24\\
 B_n,\ n\geq4&4\\
 D_n,\ n\geq5\text{ odd}&2\\
 D_n,\ n\equiv2\pmod4,\ n\geq6&4\\
 D_n,\ 4\mid n&0\\
 I_2(m),\ m\geq3\text{ odd}&2\\
 I_2(m),\ 4\mid m&8\\
 I_2(m),\ m\equiv2\pmod4&16\\
 H_3&4\\
 H_4,F_4,E_6,E_7,E_8&0 .
\end{array}
\]
The coincident types \(A_2=I_2(3)\) and \(B_2=I_2(4)\) are interpreted consistently.
The families themselves are given in Theorems~\ref{thm:A}, \ref{thm:B}, \ref{thm:D}, \ref{thm:dihedral}, \ref{thm:H3}, and \ref{thm:B3list}.

Every model has a Hecke realization and a \(W\)-graph over a splitting field.
In types \(A\), \(B_n\) with \(n\geq4\), and every type \(D_n\) admitting a model, the classified families have realizations on natural induction lattices with bar operators, canonical bases, and integral \(W\)-graphs.
All twenty-four type \(B_3\) families have such natural Hecke and canonical-basis realizations; exactly fourteen have termwise natural canonical \(W\)-graphs.
The other ten contain a natural summand lattice which is not the specialization of any integral \(W\)-graph module.
\end{theorem}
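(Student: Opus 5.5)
The proof reduces the main theorem to the type-by-type results cited in its statement, Theorems~\ref{thm:A}, \ref{thm:B}, \ref{thm:D}, \ref{thm:dihedral}, \ref{thm:H3} and \ref{thm:B3list}, together with a separate nonexistence argument for \(D_{4r}\), \(H_4\), \(F_4\) and \(E_{6,7,8}\).

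For each type I would proceed in three steps. First, list the quasiparabolic subgroups \(H\leq W_J\) for every \(J\subseteq S\). In the classical types this uses Lu's classification; in \(H_3\) and the exceptional types it is a finite computation using the Rains--Vazirani axioms. Second, compute the allowed characters \(\chi_{J,H,\sigma}\) and keep only the candidates. These must satisfy two filters: \(\chi_{J,H,\sigma}\) is multiplicity-free (a Gelfand-triple condition, which Godsil--Meagher, Turek and Abramenko--Parkinson--Van Maldeghem cut down sharply), and \(\langle\chi_{J,H,\sigma},\rho\rangle\le1\) for every \(\rho\in\Irr(W)\).

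Third, solve the exact-cover problem \(\sum\chi=\Gamma_W\) over the surviving candidates. Two necessary conditions prune the search:
\[
\sum [W:H]=\sum_{\rho}\rho(1),
\]
and the count of characters \(\rho\) with \(\rho(1)\) odd, resp.\ the Frobenius--Schur indicators, since all constituents here are real. In types \(A\), \(B\) and \(D\) the existence direction comes from the perfect models of Marberg--Zhang, namely involution centralizers with the parabolic extension condition. Completeness then needs a subgroup argument showing that any non-perfect quasiparabolic stabilizer, such as the Green--Xu root-set stabilizers, either fails multiplicity-freeness or duplicates a perfect-model term. The exception is \(D_{4r+2}\), where the argument should instead produce the two additional families.

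For \(D_{4r}\), nonexistence would follow from a parity obstruction. Every model must cover the pairs of characters \(\{\lambda,\pm\}\) attached to \(\lambda=(\mu,\mu)\), and a dimension count modulo \(2\) at the split classes contradicts that requirement. For the dihedral groups \(I_2(m)\) the enumeration is explicit. The quasiparabolic subgroups are the trivial group, reflection subgroups of order \(2\), and \(W\) itself. Counting all linear-character combinations gives \(2\), \(8\) or \(16\) according to \(m\bmod4\), because there are two or four linear characters and the terms can be assigned in several ways. For \(B_3\), \(H_3\) and the exceptional groups I would rely on machine enumeration of candidates followed by exact-cover search.

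The Hecke part goes term by term. Over a splitting field, Gyoja's theorem in Hahn's formulation already gives a \(W\)-graph for any representation, which settles the first sentence. For the lattice statements, Lu's and Marberg's results provide a bar operator on \(\Z W\otimes_{\Z H}\Z_{\sigma|_H}\) for quasiparabolic \(H\), extended to the twisted versions in the usual way, and this yields a canonical basis. Integrality of the resulting \(W\)-graph must then be checked: whether the \(\mu\)-coefficients lie in \(\Z\) and whether the graph is admissible on the natural lattice.

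For the ten exceptional \(B_3\) families I would show that some summand lattice \(L\) has reduction at \(q=1\) that is not isomorphic, as a \(\Z W\)-lattice, to the specialization of any integral \(W\)-graph module. The approach is to compare invariants such as \(L/2L\) or the Gram/discriminant data against every integral \(W\)-graph of the same character, of which there are finitely many up to equivalence in such small rank. This \(B_3\) lattice analysis, together with the completeness argument excluding non-perfect stabilizers in \(D_{4r+2}\) and \(B_3\), is the main obstacle. I would handle both by explicit computation, and \(B_3\) is where the count jumps to \(24\).
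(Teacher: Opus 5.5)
Assembling the statement from the type-by-type theorems is also how the paper proceeds. However, several of the arguments you sketch for those theorems would fail.

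\textbf{Dihedral groups.} The QP subgroups of \(I_2(m)\) are not only \(1\), the reflection subgroups and \(W\). You omit two kinds of subgroup:
\begin{itemize}
\item the index-two subgroups \(\Ker(\lambda\mu)\), whose two-point quotients give the joined terms \(\lambda+\mu=\Ind_{\Ker(\lambda\mu)}^W(\lambda|_{\Ker(\lambda\mu)})\);
\item the order-four subgroups \(\langle r^{m/2},u\rangle\), which are QP as preimages of reflection subgroups under the Coxeter quotient \(I_2(m)\to I_2(m/2)\).
\end{itemize}
With your list the regular character is not MF, so only reflection-induced and linear terms survive. That gives \(2,4,4\) models instead of \(2,8,16\). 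The dependence on \(m\bmod 4\) comes exactly from \(\Ind_{\langle r^{m/2},u\rangle}^W\lambda_{ab}\) with \(\lambda_{ab}(r)=-1\). Its two-dimensional constituents are precisely the odd-indexed \(\rho_h\), and this happens only when \(m/2\) is odd.

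\textbf{Infinite classical families.} Pruning by \(\sum[W:H]=\Gamma_W(1)\) or by parity counts cannot settle infinitely many ranks. Your proposed subgroup argument also rests on a false premise. All four \(D_{4r+2}\) models contain a twist of \(\Ind_{E_m}^{D_{2m}}\one\). The sixteen additional \(B_3\) models contain MF terms such as \(F_1=\Ind_K^W\one\), which occur in no perfect model.

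The missing mechanism is as follows.
\begin{itemize}
\item Project each term to \(S_n\) via \(\pi_L,\pi_R,\pi_D\). Each projection is an allowed type-A term, so by Theorem~\ref{thm:A} the projections of a model form a full row or column family.
\item In type B, the budget \eqref{eq:Bbudget} sums over a model to exactly \(\Gamma_{B_n}(1)\). Equality is therefore forced termwise, and it holds only for the ordinary layers (or the \(k=1\) split).
\item In type D, once the positive-\(q\) layers are forced, the complementary associates can come only from the top term. By Lemma~\ref{lem:half}, \(\Ind_{E_m}^{D_{2m}}\one\) is MF iff \(m\) is odd, because even-row partitions of \(m\) produce repeated plus associates.
\end{itemize}
Your ``dimension count modulo \(2\) at the split classes'' is not this argument and does not by itself exclude \(D_{4r}\). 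In the paper, the split classes \(z_m,z_m^\diagauto\) serve only to fix the associate signs.

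\textbf{Hecke side.} Lu's bar theorem concerns only \(M(W/H)\) and \(N(W/H)\), that is, \(\sigma|_H\in\{\one,\sgn_W|_H\}\). There is no ``usual'' twist by \(\ee\) or \(\dd\). For instance, \(T_{s_0}\mapsto -T_{s_0}^{-1}\), \(T_{s_1}\mapsto T_{s_1}\) breaks the length-four braid relation in the equal-parameter algebra, since the defect is \(-a^2(T_{s_0}T_{s_1}-T_{s_1}T_{s_0})\). So the natural lattices of \(F_\ee,F_\dd,L_-\) need new deformations. The paper builds them as kernels of \(\HH(W)\otimes_{\HH(P)}\A_u\to V_4(u)\) and as an explicit two-dimensional module.

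\textbf{The integral obstruction in \(B_3\).} For the ten negative \(B_3\) families you need an invariant that \(W\)-graphs actually constrain. For commuting simple \(s,t\), there are no edges between vertices whose labels meet \(\{s,t\}\) in \(\{s\}\) and in \(\{t\}\). Hence \(T_s,T_t\) are simultaneously triangular, which gives \(\rank_{\F_2}(s-I)\le n_{++}\) when \(n_{--}=0\), and \(\le n_{--}\) when \(n_{++}=0\). This bound fails for \(s_0,s_2\) on \(L(W^+,\dd)\), \(L(K,\ee)\) and \(L(K,\dd)\). Proposition~\ref{prop:intrinsicB3} then shows these lattices are forced by the characters.

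Your plan to compare against every integral \(W\)-graph of the same character is not a finite check as stated. Two vertices with \(\tau(x)=S\), \(\tau(y)=\emptypar\) form an integral \(W\)-graph for every integer weight \(\omega(x,y)\).
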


This paper is organized as follows.
Section~\ref{sec:prelim} records the definitions and literature results used in the proofs.
Sections~\ref{sec:A}--\ref{sec:D} establish the classifications in types \(A\), \(B\), and \(D\); the complete type \(B_3\) list appears in Subsection~\ref{subsec:B3classification}.
Section~\ref{sec:exceptional} treats the dihedral and exceptional types.
Section~\ref{sec:hecke} constructs the Hecke realizations, bar operators, canonical bases, and \(W\)-graphs, with the additional \(B_3\) constructions and integral obstruction in Subsection~\ref{sec:B3canonical}.
\section{Preliminary}
\label{sec:prelim}

\subsection{Coxeter groups and quasiparabolic sets}

A Coxeter system \((W,S)\) has presentation
\[
 W=\langle S:(st)^{m_{st}}=1\rangle,
 \qquad m_{ss}=1,\quad m_{st}=m_{ts}\geq2\ (s\ne t).
\]
Throughout, \(W\) is finite unless explicitly stated otherwise.
Its rank is \(|S|\), its length function is \(\ell\), and its reflections form
\(T=\{wsw^{-1}:w\in W,\ s\in S\}\).
Its Coxeter sign is \(\sgn_W(w)=(-1)^{\ell(w)}\).
For \(J\subseteq S\), \(W_J\) is the standard parabolic subgroup and \(W^J\) is the set of minimum-length representatives of \(W/W_J\).
A conjugate of a standard parabolic subgroup is called parabolic.
We use the usual real reflection representation \(V\), as in \cite{Humphreys,BB}.
A subgroup has \emph{full support} if it is contained in no proper parabolic subgroup.
Equivalently, \(V^H=0\): a nonzero fixed vector may be moved into the closed fundamental chamber, where its stabilizer is a proper standard parabolic subgroup.
The \emph{parabolic closure} of \(H\) is the pointwise stabilizer of \(V^H\).

\begin{definition}[Rains--Vazirani \cite{RV}]
\label{def:qp}
A \emph{scaled \(W\)-set} is a \(W\)-set \(X\) with a height \(h:X\to\Z\) such that
\(|h(sx)-h(x)|\leq1\) for \(s\in S\).
It is \emph{quasiparabolic} if, for \(x\in X\), \(r\in T\), and \(s\in S\),
\begin{align}
 h(rx)=h(x)&\ \Longrightarrow\ rx=x, \tag{QP1}\label{eq:QP1}\\
 h(rx)>h(x),\ h(srx)<h(sx)&\ \Longrightarrow\ rx=sx. \tag{QP2}\label{eq:QP2}
\end{align}
A subgroup \(H\leq W\) is quasiparabolic if \(W/H\), with
\(h(wH)=\min_{u\in wH}\ell(u)\), is quasiparabolic.
The quasiparabolic Bruhat order is generated by \(x<rx\) whenever \(h(x)<h(rx)\).
\end{definition}

We abbreviate quasiparabolic by QP.
A minimum point is a point of least height.
For a subgroup \(H\), put \(H^\circ=H\cap\Ker(\sgn_W)\); the notation always refers to Coxeter parity.
A \emph{rotation} is a product of two distinct reflections.
A Coxeter homomorphism here means a homomorphism taking simple generators to simple generators or the identity, with its image equipped with the indicated Coxeter structure.

\begin{proposition}[Structural facts \cite{RV,Lu}]
\label{prop:QPfacts}
The following facts hold.
\begin{enumerate}[label=(\roman*)]
\item A finite transitive QP set has a unique minimum \(x_0\), and
\[
 h(x)-h(x_0)=\min\{\ell(w):wx_0=x\}.
\]
\item Restriction to a standard parabolic subgroup preserves QP sets, orbit by orbit; parabolic induction preserves QP sets, with height \(\ell(w)+h(x)\) on \(W^J\times X\).
Images under the Coxeter quotient maps used below preserve QP subgroups.
\item If \(H\) is QP, so is \(H^\circ\). Every even QP subgroup is generated by rotations.
An odd QP subgroup contains a simple reflection.
If \(H\) contains a simple reflection, then \(H\) is QP if and only if \(H^\circ\) is QP.
\item In a signed permutation group, let \(j\) be the largest signed symbol in an \(H\)-orbit.
Taking its stabilizer, deleting \(j,-j\), and order-preservingly relabelling the remaining symbols gives a QP subgroup of the corresponding smaller standard parabolic group.
\end{enumerate}
\end{proposition}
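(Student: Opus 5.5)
The statement is Proposition~\ref{prop:QPfacts}. It collects results of Rains--Vazirani and Lu, so the plan is to reduce each item to the axioms \eqref{eq:QP1}, \eqref{eq:QP2} and to cite the corresponding statement where the argument is long.

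\emph{Item (i).} For uniqueness of the minimum, take a point \(x\) that is not of least height in its orbit. Choose \(w\) of minimal length with \(wx\) of smaller height, and write \(w=sw'\). Using \eqref{eq:QP2} repeatedly, we push the descent to a simple reflection \(s\) with \(h(sx)<h(x)\). So every non-minimal point has a simple descent, and descending paths terminate in local minima. If two minima \(x_0\ne x_1\) existed, connect them by a reduced path. By \eqref{eq:QP1}, the reflection \(r\) relating them satisfies \(h(rx_0)\neq h(x_0)\), which contradicts the minimality of both. For the height formula, the inequality \(|h(sx)-h(x)|\le 1\) gives \(h(x)-h(x_0)\le\ell(w)\) whenever \(wx_0=x\). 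The reverse inequality follows by descending from \(x\) along simple descents to \(x_0\). This is Rains--Vazirani's Proposition~2.13 and Corollary~2.10.

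\emph{Item (ii).} Restriction to \(W_J\) keeps the axioms, because the reflections and simple reflections of \(W_J\) are among those of \(W\). One then applies (i) orbit by orbit. For induction, give \(W^J\times X\) the height \(\ell(w)+h(x)\) and verify \eqref{eq:QP1} and \eqref{eq:QP2} through the factorization \(W=W^JW_J\), using the lifting property of Deodhar; this is RV Theorem~3.? on parabolic induction. The claim about images under Coxeter quotients is checked the same way: reflections map to reflections or to the identity, and lengths do not increase. We will cite this from Lu.

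\emph{Item (iii).} We show \(W/H^\circ\) is QP by analysing the double cover \(W/H^\circ\to W/H\) and using that \(\sgn_W\) records height parity.
\begin{itemize}
\item An odd QP subgroup \(H\) contains a reflection \(r\) fixing the minimum coset \(H\). Item (i), together with a descent argument applied to the fibres, moves \(r\) to a simple reflection.
\item If \(H\) contains a simple reflection \(s\), then \(W/H^\circ\) is induced along \(\langle s\rangle\), which gives the equivalence claimed in the statement.
\item For an even QP subgroup, the minimum coset \(H\) is stabilized by \(H\). The pairs \(w\) and \(ws\) with \(s\in S\) give rotation generators via the Bruhat-graph connectivity of (i).
\end{itemize}
This is Lu's lemma, and I would cite it.

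\emph{Item (iv).} Apply (ii) after identifying the stabilizer of the largest symbol \(j\) with a subgroup of a standard parabolic \(W_{S\setminus\{s\}}\)-type group. Deleting \(\pm j\) and relabelling is a Coxeter isomorphism onto the smaller group, so QP is preserved. The main obstacle is showing that this stabilizer is exactly the stabilizer of a minimal-height point in the restricted QP set. I would take this from Lu's treatment of classical types, after checking that the largest symbol in an orbit gives the minimum of the restriction to the corresponding parabolic subgroup.
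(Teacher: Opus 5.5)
Your overall route is the paper's: it proves this proposition only by citation. Parts (i)--(ii), including the quotient claim, are attributed to Rains--Vazirani; (iii) to Lu's Theorems 1--2 and Chapter 2; and (iv) to the minimum-double-coset operation in Lu's Example 8. Where you defer to these sources you agree with the paper. However, the argument you sketch for (i) contains a step that fails. Two distinct points of least height are in general not related by any reflection, so QP1 cannot be applied to ``the reflection relating them''. Moreover, even a genuine proof that every non-minimal point has a simple descent (your ``push the descent'' step is not one) would only show that local minima are global, not that the minimum is unique.

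The standard argument starts from a point \(m\) of least height, which exists by finiteness. By QP1 it satisfies \(h(rm)>h(m)\) for every reflection \(r\) with \(rm\neq m\). One proves \(h(y)=h(m)+k(y)\), where \(k(y)=\min\{\ell(w):wm=y\}\), by induction on \(k(y)\). Write \(y=s\,s_{k-1}\cdots s_1m\) with \(k=k(y)\), and put \(x_j=s_j\cdots s_1m\), so that \(h(x_j)=h(m)+j\) for \(j<k\). QP1 excludes \(h(y)=h(x_{k-1})\). Suppose \(h(y)<h(x_{k-1})\), and consider the reflections \(t_j=s_{j+1}\cdots s_{k-1}\,s\,s_{k-1}\cdots s_{j+1}\). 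They satisfy \(t_jx_j=s_{j+1}\cdots s_{k-1}y\), so QP1 and \(y\neq x_{k-1}\) give \(h(t_jx_j)\neq h(x_j)\). Also \(h(t_0x_0)>h(x_0)\) and \(h(t_{k-1}x_{k-1})<h(x_{k-1})\), so there is a sign change at some \(j\). There QP2, applied to \(x_{j-1}\), \(t_{j-1}\), \(s_j\) (using \(s_jt_{j-1}=t_js_j\)), gives \(t_{j-1}x_{j-1}=x_j\). Then \(y=s_{k-1}\cdots s_{j+1}s_{j-1}\cdots s_1m\) has a word of length \(k-2\), a contradiction. This yields the uniqueness of the minimum, the height formula, and a simple descent at every \(y\neq m\) at once.

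Your other sketches must likewise be replaced by the citations.

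In (ii), ``reflections map to reflections or \(1\), and lengths do not increase'' (plus lifting reduced words) handles Coxeter-quotient preimages. The statement, as used in Lemma~\ref{lem:Bprojection}, concerns images, which these facts do not reach.

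In (iii), the claim that an odd QP subgroup contains some reflection needs an argument. A reduced word for an odd-length \(g\in H\) gives an odd closed walk at the minimum coset, so some step preserves height. QP1 makes that step a loop, so a conjugate of a simple reflection lies in \(H\). Moving from this to a simple reflection, the converse when \(s\in H\), and generation by rotations are Lu's results. In particular, the parabolic induction of (ii) from \(W_{\{s\}}\) cannot produce \(W/H^\circ\) in general, since it only yields point stabilizers of order at most two.

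Your plan for (iv) is correct and is exactly the paper's operation. In \(B_n\), restrict \(W/H\) to \(W_J=\Stab(n)\). Minimal representatives \(v\in W^J\) satisfy \(0<v(1)<\cdots<v(n-1)\), and \(\ell(v)\) strictly decreases as \(v(n)\) increases in the order \(-n<\cdots<-1<1<\cdots<n\). Hence the largest symbol \(j\) of the orbit gives the minimum point \(uH\), with \(u=v^{-1}\), of its \(W_J\)-orbit. Then \(W_J\cap uHu^{-1}=u(H\cap\Stab(j))u^{-1}\) is QP by (i)--(ii), and conjugation by \(u\) is the order-preserving relabelling. It is this minimum-point identification, not the Coxeter isomorphism alone, that transfers the QP property, since QP is not conjugation invariant.
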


Parts (i)--(ii) are the minimum-point, restriction, induction, and quotient results of \cite{RV}.
For (iii), we use \cite[Theorems 1--2 and Chapter 2]{Lu}; part (iv) is the minimum-double-coset operation in \cite[Example 8]{Lu}.
The minimum-point condition is essential when using a subgroup embedding.
Conjugate subgroups have the same permutation character, but their minimum-length coset heights need not both be QP.

\subsection{Characters, partitions, and induction}

Characters are complex characters.
We use
\(\langle\alpha,\beta\rangle_G=|G|^{-1}\sum_{g\in G}\alpha(g)\overline{\beta(g)}\).
A character is \emph{multiplicity-free} (MF) when every irreducible multiplicity is at most one.
Its support is
\(\Supp(\alpha)=\{\rho\in\Irr(G):\langle\alpha,\rho\rangle_G>0\}\).
For a fixed ordering \(\rho_1,\ldots,\rho_d\) of \(\Irr(W)\), an MF character is identified with its support vector in \(\{0,1\}^d\).
An \emph{exact cover} is a family of these supports partitioning \(\{1,\ldots,d\}\).
Thus exact covers are precisely the individual-character models considered here.
An external tensor product is denoted by \(\boxtimes\).
If \(p:G\to Q\) is onto, inflation means composition of a character of \(Q\) with \(p\).
We use Frobenius reciprocity, induction in stages, the index-two induction formula, and Clifford theory in the standard form \cite{Serre,GeckPfeiffer}.
For example, if \(H=\Ker(\eta:L\to\{\pm1\})\) has index two, then
\[
 \Ind_H^G(\tau|_H)=\Ind_L^G\tau+\Ind_L^G(\tau\eta).
\]
All these summands are genuine characters with nonnegative multiplicities.

For \(H\leq G\) and \(\lambda\in\Lin(H)\), the triple
\((G,H,\lambda)\) is a \emph{linear Gelfand triple}, or a
\emph{twisted Gelfand pair}, if \(\Ind_H^G\lambda\) is MF.
When \(\lambda=\one\), this says that \((G,H)\) is a
\emph{Gelfand pair}.
Put
\[
 e_\lambda=\frac1{|H|}\sum_{h\in H}\lambda(h^{-1})h,
 \qquad
 \mathscr A(G,H,\lambda)=e_\lambda\C G e_\lambda .
\]
We call \(\mathscr A(G,H,\lambda)\) the finite twisted double-coset
algebra.  It is distinct from the Iwahori--Hecke algebra \(\HH(W)\)
introduced later.

For \(g\in G\), set \(H_g=H\cap gHg^{-1}\) and
\(\lambda^g(x)=\lambda(g^{-1}xg)\) for \(x\in H_g\).
Call the double coset \(HgH\) \emph{\(\lambda\)-admissible} when
\(\lambda|_{H_g}=\lambda^g|_{H_g}\).

\begin{proposition}[Gelfand-pair criteria \cite{CSST,RSW}]
\label{prop:Gelfandcriteria}
Let \(H\leq G\) and \(\lambda\in\Lin(H)\).
The following statements hold.
\begin{enumerate}[label=(\roman*)]
\item There is an anti-isomorphism
\[
 \mathscr A(G,H,\lambda)
 \longrightarrow
 \operatorname{End}_{\C G}(\Ind_H^G\lambda).
\]
If
\(\Ind_H^G\lambda=\sum_{\rho\in\Irr(G)}m_\rho\rho\), then
\[
 \mathscr A(G,H,\lambda)
 \cong\prod_{m_\rho>0}\operatorname{Mat}_{m_\rho}(\C).
\]
In particular, \((G,H,\lambda)\) is a linear Gelfand triple if and
only if \(\mathscr A(G,H,\lambda)\) is commutative.
\item Mackey theory gives
\begin{equation}
 \left\langle\Ind_H^G\lambda,\Ind_H^G\lambda\right\rangle_G
 =\sum_{HgH\in H\backslash G/H}
   \dim\Hom_{H_g}(\lambda|_{H_g},\lambda^g|_{H_g}).
 \label{eq:MackeyGelfand}
\end{equation}
Thus this inner product, the dimension of \(\mathscr A(G,H,\lambda)\),
and the number of \(\lambda\)-admissible double cosets are equal.
\item Suppose that \(\lambda(H)\subseteq\{\pm1\}\).  If every
\(\lambda\)-admissible double coset contains an involution, then
\((G,H,\lambda)\) is a linear Gelfand triple.
\item Suppose \(H\leq L\leq G\) and \(\lambda\) extends to
\(\widetilde\lambda\in\Lin(L)\).  If
\(\Ind_H^G\lambda\) is MF, then \((L,H)\) is a Gelfand pair.
\end{enumerate}
\end{proposition}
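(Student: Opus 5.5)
We prove the four parts separately; they are standard facts, and the only delicate point is the scalar bookkeeping in (iii). Throughout, \(e=e_\lambda\) is an idempotent of \(\C G\), and it satisfies \(he=eh=\lambda(h)e\) for \(h\in H\). Hence \(\C G e\cong\Ind_H^G\lambda\) as left \(\C G\)-modules.

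\emph{Part (i).} For \(a\in e\C G e\), let \(\phi_a\) be right multiplication by \(a\) on \(\C G e\). Each \(\phi_a\) is a \(\C G\)-module map.
\begin{itemize}
\item Every \(\phi\in\operatorname{End}_{\C G}(\C G e)\) is determined by \(\phi(e)\), since \(\phi(xe)=x\phi(e)\).
\item We have \(\phi(e)=\phi(e\cdot e)=e\phi(e)\in e\C G e\), and \(\phi=\phi_{\phi(e)}\).
\end{itemize}
So \(a\mapsto\phi_a\) is a bijection, and it reverses products. Wedderburn's theorem gives \(\operatorname{End}_{\C G}\bigl(\bigoplus\rho^{m_\rho}\bigr)\cong\prod\operatorname{Mat}_{m_\rho}(\C)\), and this product is isomorphic to its own opposite algebra. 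A product of matrix algebras is commutative exactly when every \(m_\rho\le1\).

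\emph{Part (ii).} By Frobenius reciprocity,
\[
\langle\Ind\lambda,\Ind\lambda\rangle_G=\langle\lambda,\operatorname{Res}_H\Ind_H^G\lambda\rangle_H .
\]
Mackey's formula gives \(\operatorname{Res}_H\Ind_H^G\lambda=\bigoplus_{HgH}\Ind_{H_g}^H\lambda^g\). A second application of Frobenius reciprocity gives \eqref{eq:MackeyGelfand}. Each summand is \(1\) or \(0\), according as the coset is \(\lambda\)-admissible or not, since both characters are linear. The left side equals \(\sum m_\rho^2\), which is \(\dim\mathscr A\) by (i).

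\emph{Part (iii).} Put \(f_g=ege\). Then \(\mathscr A\) is spanned by the \(f_g\) with \(g\) running over double-coset representatives.
\begin{itemize}
\item Changing the representative only rescales: \(f_{hgh'}=\lambda(h)\lambda(h')f_g\).
\item Admissibility is independent of the representative.
\item If \(HgH\) is not admissible, then \(f_g=0\). To see this, pick \(x\in H_g\) with \(\lambda(x)\ne\lambda^g(x)\) and compute \(exge=\lambda(x)f_g\) in two ways.
\end{itemize}
Therefore \(\mathscr A\) is spanned by \(f_g\) with \(g\) an involution chosen in each admissible coset. Let \(*\) be the linear anti-involution of \(\C G\) given by \(g\mapsto g^{-1}\). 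Since \(\lambda\) is \(\pm1\)-valued, \(\lambda(h^{-1})=\lambda(h)\), so \(e^*=e\). For an involution \(g\) we get \(f_g^*=eg^{-1}e=f_g\). Thus \(*\) restricts to the identity on \(\mathscr A\). For \(a,b\in\mathscr A\) this gives \(ab=(ab)^*=b^*a^*=ba\), so \(\mathscr A\) is commutative, and (i) finishes the argument.

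\emph{Part (iv).} Since \(\lambda=\widetilde\lambda|_H\), the projection formula gives \(\Ind_H^L\lambda=\widetilde\lambda\otimes\Ind_H^L\one\). Suppose some \(\tau\in\Irr(L)\) occurs in \(\Ind_H^L\one\) with multiplicity \(m\ge2\). Then the irreducible character \(\widetilde\lambda\tau\) occurs \(m\) times in \(\Ind_H^L\lambda\). By induction in stages, \(\Ind_H^G\lambda\) then contains \(m\cdot\Ind_L^G(\widetilde\lambda\tau)\). This character is nonzero, so any of its constituents has multiplicity at least \(2\) in \(\Ind_H^G\lambda\), a contradiction.
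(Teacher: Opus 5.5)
Your proposal is correct and follows the paper's route. For (iii) you use the inversion anti-involution fixing \(e_\lambda\) and the admissible elements \(e_\lambda g e_\lambda\), and for (iv) you twist by \(\widetilde\lambda\) and induce in stages, exactly as the paper does. You also write out the standard commutant and Mackey arguments for (i)--(ii), which the paper only cites, and you prove that \(e_\lambda g e_\lambda=0\) on non-admissible double cosets, a step the paper leaves implicit.
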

\begin{proof}
Parts (i)--(ii) are the standard commutant and Mackey descriptions of
finite Gelfand triples \cite{CSST}.
For (iii), inversion fixes \(e_\lambda\) and every basis element
\(e_\lambda g e_\lambda\) indexed by an admissible double coset.
It is an anti-automorphism, so the corner algebra is commutative; this
is the twisted Gelfand trick in \cite[Proposition III.1.2]{RSW}.

For (iv),
\[
 \Ind_H^L\lambda
 =\widetilde\lambda\otimes\C[L/H].
\]
If \(\C[L/H]\) had a constituent of multiplicity at least two, the
same would hold after twisting and then inducing to \(G\), since
induction takes a nonzero genuine character to a nonzero genuine
character.  This contradicts the MF hypothesis.
\end{proof}

Consequently the classification below has two separate layers.
Every allowed term first has to define a linear Gelfand triple, and its
ambient pair \((W_J,H)\) has to be a Gelfand pair by part (iv).
The supports of the surviving triples must then form an exact cover.
Commutativity of the individual corner algebras proves the first
condition but does not imply the second.

\begin{proposition}[Corank obstruction {\cite[Theorem 1.6]{MZperfect}}]
\label{prop:corank}
Suppose that \(W\) is irreducible and \(|S\setminus J|\geq2\).
Then \(\Ind_{W_J}^W\rho\) is not MF for any \(\rho\in\Irr(W_J)\).
Consequently every allowed model term has \(J=S\) or \(|S\setminus J|=1\).
Also, a subgroup contained in a parabolic subgroup of corank at least two cannot contribute an MF induced character, for any inducing character.
\end{proposition}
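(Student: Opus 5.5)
The first assertion is exactly \cite[Theorem 1.6]{MZperfect}, so I will quote it rather than reprove it. The plan is to deduce the two ``consequently'' statements from it using only transitivity of induction and the fact that induction sends nonzero genuine characters to nonzero genuine characters. The principle I will use repeatedly is this: if \(\alpha\) is a genuine character of a subgroup \(K\) containing a constituent \(\rho\), then \(\Ind_K^W\alpha-\Ind_K^W\rho\) is a genuine character. Hence if \(\Ind_K^W\rho\) is not MF, neither is \(\Ind_K^W\alpha\).

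\emph{Second assertion.} Take an allowed term \(\chi_{J,H,\sigma}\) with \(H\le W_J\) and \(\sigma\in\Lin(W_J)\), and suppose \(|S\setminus J|\ge2\). By induction in stages,
\[
\chi_{J,H,\sigma}=\Ind_{W_J}^W\bigl(\Ind_H^{W_J}(\sigma|_H)\bigr).
\]
The inner character is a nonzero genuine character of \(W_J\), so it has some irreducible constituent \(\rho\in\Irr(W_J)\). By the cited theorem \(\Ind_{W_J}^W\rho\) is not MF, and by the principle above neither is \(\chi_{J,H,\sigma}\). A model term must be MF, since it is a summand of \(\Gamma_W\). Therefore every term has \(J=S\) or \(|S\setminus J|=1\).

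\emph{Third assertion.} Let \(H\le P=wW_Kw^{-1}\) with \(|S\setminus K|\ge2\), and let \(\lambda\) be any character of \(H\). Write \(\Ind_H^W\lambda=\Ind_P^W(\Ind_H^P\lambda)\) and choose an irreducible constituent \(\rho\) of \(\Ind_H^P\lambda\). Conjugation by \(w\) identifies \(P\) with \(W_K\), and \(\Ind_P^W\rho=\Ind_{W_K}^W\rho^{w}\), where \(\rho^w(x)=\rho(wxw^{-1})\) is irreducible on \(W_K\); this holds because induced characters are invariant under conjugating the subgroup and the character together inside \(W\). The cited theorem shows \(\Ind_{W_K}^W\rho^{w}\) is not MF, and the principle above finishes the argument. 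No QP hypothesis is needed here, which is why the statement says ``for any inducing character''.

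The only substantive point is the cited theorem itself. If a self-contained argument were wanted, I would first reduce to corank exactly two. Choose \(J\subseteq J'\) with \(|S\setminus J'|=2\), pick a constituent \(\rho'\) of \(\Ind_{W_J}^{W_{J'}}\rho\), and apply the same principle; so it suffices to treat \(\rho'\in\Irr(W_{J'})\). Then I would run a Mackey computation over \(W_{J'}\backslash W/W_{J'}\) as in Proposition~\ref{prop:Gelfandcriteria}(ii). The aim is to show that \(\langle\Ind\rho',\Ind\rho'\rangle_W\) exceeds the number of constituents, using the two extra simple reflections to produce at least two distinct nontrivial double cosets contributing to the self-intertwining. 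This type-independent counting step is where I expect the difficulty to lie, which is why I would rely on \cite{MZperfect} for it.
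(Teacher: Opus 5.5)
Your proposal is correct and follows the paper's route: the paper quotes \cite[Theorem 1.6]{MZperfect} for the first assertion and derives both consequences ``by induction in stages and positivity,'' exactly as you do. Your explicit conjugation step $\Ind_P^W\rho=\Ind_{W_K}^W\rho^{w}$ for a non-standard parabolic $P=wW_Kw^{-1}$ spells out a detail the paper leaves implicit.
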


The consequences follow by induction in stages and positivity.

For a subgroup \(K\leq G\), write
\(N_G(K)=\{g\in G:gKg^{-1}=K\}\), and write
\(C_G(x)=\{g\in G:gx=xg\}\) for the centralizer of an element.

\begin{proposition}[Finite reduction]\label{prop:finite}
For each \(J=S\) or \(|S\setminus J|=1\), enumerate the subgroups of \(W_J\) up to conjugacy.
Retain only \(H\) with \([W:H]\leq\Gamma_W(1)\) for which \((W_J,H)\) is a Gelfand pair, and compute
\[
 m_\rho(J,H,\sigma)
 =\frac1{|H|}\sum_{h\in H}\sigma(h)\overline{\rho(h)},
 \qquad\rho\in\Irr(W),\quad\sigma\in\Lin(W_J).
\]
Discard vectors with an entry greater than one and identify equal vectors.
Their exact covers give all models when the QP condition is omitted.
For QP models, retain a vector precisely when at least one of its realizations has a coset action admitting a QP minimum-length height based at some point.
\end{proposition}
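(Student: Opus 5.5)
This proposition is a correctness statement for a finite search procedure. The plan is to show two things: every allowed term of any model survives each filter, and the filters discard only vectors that cannot occur in a model.

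Fix a model, that is, a family of allowed characters \(\chi_{J,H,\sigma}=\Ind_H^W(\sigma|_H)\) summing to \(\Gamma_W\). We check the filters one at a time.

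\emph{Corank.} Each term has \(J=S\) or \(|S\setminus J|=1\). Otherwise \(H\leq W_J\) lies in a parabolic subgroup of corank at least two, and by Proposition~\ref{prop:corank} the term could not even be MF. It must be MF, since \(\Gamma_W\) is MF and all summands are genuine characters.

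\emph{Conjugacy.} Replacing \(H\) by a \(W_J\)-conjugate \(xHx^{-1}\) with \(x\in W_J\) changes neither the induced character nor \(\sigma|_H\) up to conjugation. This holds because \(\sigma\) is a class function on \(W_J\). So enumerating subgroups of \(W_J\) up to \(W_J\)-conjugacy loses nothing at the level of vectors.

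\emph{Index bound.} The bound \([W:H]\leq\Gamma_W(1)\) follows by evaluating the equation \(\sum\chi=\Gamma_W\) at the identity.

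\emph{Gelfand-pair filter.} The pair \((W_J,H)\) must be a Gelfand pair by Proposition~\ref{prop:Gelfandcriteria}(iv), applied with \(L=W_J\) and \(\widetilde\lambda=\sigma\).

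\emph{The vector.} By Frobenius reciprocity, the multiplicity vector of the term is exactly \(m_\rho(J,H,\sigma)\). It has entries at most one, and it depends only on the character. Identifying equal vectors therefore matches the equivalence relation defining \(\QQ(W)\).

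Since the terms sum to \(\Gamma_W\), their supports partition \(\Irr(W)\). So the model is an exact cover among the retained vectors. Conversely, any exact cover of retained vectors is a sum of allowed-type characters equal to \(\Gamma_W\), because each retained vector is realized by some \((J,H,\sigma)\). This proves the statement without the QP condition.

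For the QP refinement, the QP property is a property of the realization \((J,H)\), not of the vector. A vector belongs to a QP model precisely when some realization has \(H\) QP. We also have to allow the minimum-length height to be based at a point other than the coset \(H\) itself. The reason is that \(W/H\) and \(W/xHx^{-1}\) are isomorphic \(W\)-sets, and QP-ness of \(xHx^{-1}\) means the height \(\min\ell\) measured from the coset \(xHx^{-1}\) is QP. By Proposition~\ref{prop:QPfacts}(i), a QP transitive set has a unique minimum, and the height is the distance from it. So "some conjugate of \(H\) is QP" is equivalent to "the coset action admits a QP height of the form \(h(x)=\min\{\ell(w):wx_0=x\}\) for some base point \(x_0\)".

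One subtlety needs care. Being a QP subgroup of \(W\) requires \(H\leq W_J\) after conjugation, and the conjugating element must keep \(\sigma|_H\) the restriction of a character of \(W_J\). Conjugating by \(W_J\) is harmless. A base point outside \(W_J\cdot H\) corresponds to a conjugate that may leave \(W_J\). I would handle this by noting that \(\chi\) depends only on the pair of characters involved. Concretely, the vector is retained when some realization \((J',H',\sigma')\) giving the same vector is QP, which is the precise meaning of "at least one of its realizations". I expect this bookkeeping to be the main point requiring care. Everything else is a direct application of the cited propositions.
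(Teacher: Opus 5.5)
Your non-QP argument (corank, degree bound, the Gelfand-pair filter via Proposition~\ref{prop:Gelfandcriteria}(iv), Frobenius reciprocity, $W_J$-conjugation, exact covers) is the same as the paper's. The gap is in the QP refinement, at the point you flag yourself.

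You run the test on the $W$-set $W/H$. There a base point $b=xH$ with $x\in W\setminus W_J$ has stabilizer $xHx^{-1}$. This subgroup need not lie in $W_J$. It also need not lie in any corank-$\le 1$ standard parabolic subgroup that has a linear character restricting to the transported character $\sigma^x$. A QP height based at such a point therefore exhibits no allowed QP term. Your remedy, that the vector is retained when some realization $(J',H',\sigma')$ with the same vector is QP, only restates the retention rule. It does not show that a retained vector is realized by an allowed QP term, which is the direction needing proof. Read this way, with the $W$-action and arbitrary base points, the criterion could even be too permissive. So the proof has to fix which base points are allowed.

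The missing step is to ensure every tested base point has an allowed stabilizer. The paper tests the $W_J$-set $W_J/H$ with the reflections of $W_J$. Then $\Stab_{W_J}(b)=xHx^{-1}$ with $x\in W_J$, so:
\begin{enumerate}[label=(\alph*)]
\item the stabilizer lies in $W_J$, and $\sigma$ restricts to it;
\item it realizes the same induced character;
\item the height based at $b$ is exactly its minimum-length coset height.
\end{enumerate}
Conversely, Proposition~\ref{prop:QPfacts}(i) shows that any QP $W_J$-conjugate of $H$ is detected at the corresponding point, so testing every $b$ is exhaustive.

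Passing between QP in $W_J$ and QP in $W$ is Proposition~\ref{prop:QPfacts}(ii). Parabolic induction gives the height $\ell(w)+h(x)$ on $W^J\times X$. This equals the minimum-length height on $W/H$, because $\ell(wu)=\ell(w)+\ell(u)$ for $w\in W^J$ and $u\in W_J$. Restriction gives the converse.

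Equivalently, within your own framework, restrict base points in $W/H$ to the $W_J$-orbit of the coset $H$. This remains exhaustive, since allowed realizations matter only up to $W_J$-conjugacy.
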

\begin{proof}
Proposition~\ref{prop:corank} makes the list of parabolic sources exhaustive.
Every model term is MF and has degree at most \(\Gamma_W(1)\).
Since \(\sigma\) extends to \(W_J\), Proposition~\ref{prop:Gelfandcriteria}(iv) justifies the preliminary Gelfand-pair test.
Frobenius reciprocity gives the displayed entries.
Conjugating \(H\) within \(W_J\) preserves all these induced characters.

For the additional QP test, fix a point \(b\) of each coset action and assign to \(x\) its shortest simple-generator distance from \(b\).
Check QP1 and QP2 using all reflections of \(W_J\).
If the test succeeds, \(\Stab_{W_J}(b)\) realizes the required character and is QP.
Conversely, Proposition~\ref{prop:QPfacts}(i) says that the height of every finite transitive QP set is, up to translation, obtained this way from its minimum.
Testing all possible \(b\)'s is therefore exhaustive.
Finally, nonnegative integral multiplicities make the model equation exactly the stated exact-cover problem.
\end{proof}

The archive \cite{ZhangData} distinguishes three stages: exhaustive subgroup generation, calculation of exact multiplicity vectors, and verification of the exported data.
The ordinary GAP sources use the subgroup algorithms and the complete tables of marks in \cite{GAP,CTblLib,TomLib}; a table of marks records, up to subgroup conjugacy, the numbers of fixed cosets in transitive permutation actions.
Independent readers check character orthogonality, degrees, all accepted QP actions, a violating axiom at every rejected basepoint, and the exact covers.
These readers verify the exported finite records; completeness of the subgroup input comes from the specified enumeration and the reductions just proved.
The archive fixes the irreducible row ordering by the full character values, so equal degrees never identify different irreducible characters.

More generally, if \(\Ind_H^L\tau\) contains an irreducible character twice, its induction to any overgroup still has a repeated constituent.
If \(\tau\) extends from \(H\) to \(L\), then \(\Ind_H^G\tau\) contains \(\Ind_L^G\tau\) as a positive subcharacter.
We will repeatedly use these two observations.

A partition \(\lambda\vdash n\) is a weakly decreasing sequence of nonnegative integers with sum \(n\); zeros are suppressed.
Write \(p(n)\) for the number of partitions of \(n\), with \(p(0)=1\).
Write \(\chi^\lambda\) for the irreducible character of \(S_n\) indexed by \(\lambda\).
Its Young diagram has \(\lambda_i\) cells in row \(i\), in English notation.
Its transpose is \(\lambda^\transpose\).
A standard Young tableau fills its diagram with \(1,\ldots,n\), increasingly along rows and columns; the number of such tableaux is \(f^\lambda=\chi^\lambda(1)\).
The empty partition is \(\emptypar\).
Let
\[
 o_r(\lambda)=\#\{i:\lambda_i\text{ is odd}\},
 \qquad o_c(\lambda)=o_r(\lambda^\transpose).
\]
An even-row partition has all parts even.
For an arbitrary partition \(\lambda\), define its \emph{row-even remainder} by
\[
 \lambda^{\mathrm{ev}}=(2\lfloor\lambda_1/2\rfloor,2\lfloor\lambda_2/2\rfloor,\ldots).
\]
This deletes the final cell of each odd row.

The Frobenius characteristic takes \(\chi^\lambda\) to the Schur function \(s_\lambda\), trivial characters to \(h_n=s_{(n)}\), and signs to \(e_n=s_{(1^n)}\).
Induction from \(S_a\times S_b\) corresponds to multiplication.
The Littlewood--Richardson coefficients \(c^\nu_{\lambda\mu}\) are defined by
\(s_\lambda s_\mu=\sum_\nu c^\nu_{\lambda\mu}s_\nu\).
Equivalently, they count semistandard tableaux of skew shape \(\nu/\lambda\), content \(\mu\), whose word read from right to left along successive rows, starting at the top, is a lattice word.
Here semistandard means weakly increasing rows and strictly increasing columns; a lattice word has at least as many \(i\)'s as \(i+1\)'s in every initial segment.
The Pieri rules assert that multiplication by \(h_q\), respectively \(e_q\), adds \(q\) cells with no two in the same column, respectively row.
We use these rules, the Cauchy identity, and the Murnaghan--Nakayama rule as in \cite{MacdonaldBook}.
The latter computes \(\chi^\lambda(\mu)\) by successively removing rim hooks of the cycle lengths of \(\mu\), with sign \((-1)^{\text{number of rows}-1}\) for each rim hook.
Multiplicity-free Schur products are treated systematically in \cite{StembridgeSchur}.

\subsection{The classical QP subgroup input}
\label{subsec:Lu}

Write \(B_n=N_n\rtimes S_n\), where \(N_n=\langle f_1,\ldots,f_n\rangle\cong C_2^n\), and \(f_i\) changes the sign of coordinate \(i\).
The simple generators are \(s_0=f_1\) and \(s_i=(i,i+1)\) for \(1\leq i<n\).
Let
\[
 p:B_n\to S_n,\quad
 \ee(w)=(-1)^{\#\{i:w(i)<0\}},\quad
 \dd(w)=\sgn(p(w)),\quad \om=\ee\dd .
\]
Then \(\om=\sgn_{B_n}\).
Put \(N_n^+=\Ker(\ee|_{N_n})\) and \(D_n=N_n^+\rtimes S_n\).
The two fork generators of \(D_n\) are \(t=f_1f_2(12)\) and \(u=(12)\); the other simple generators are \((23),\ldots,(n-1,n)\).
Conjugation by \(f_1\) defines the diagram automorphism \(\diagauto\) of \(D_n\).

On \(2k\) consecutive coordinates define
\[
 \pair_k=C_2\wr S_k\leq S_{2k},\qquad
 a_i=(2i-1,2i),\quad c_i=f_{2i-1}f_{2i},
\]
where \(\pair_k\) preserves the indicated pairing.
Let \(b_i=(2i-1,2i+1)(2i,2i+2)\) interchange neighboring pairs.
The signed pairing subgroup and its even-special subgroup are
\begin{equation}
 C_k=\langle a_i,c_i,b_i\rangle\cong(C_2\times C_2)\wr S_k,\qquad
 E_k=\Ker\eta_k,\quad
 \eta_k(a_i)=\eta_k(b_i)=1,\quad\eta_k(c_i)=-1 .
 \label{eq:CE}
\end{equation}
We put \(C_0=1\).
Lu's \emph{A form} means an unsigned QP subgroup.
On a signed initial segment, the \emph{B form} has full sign kernel \(N_d\);
the two \emph{D forms} available in \(B_d\) are
\(N_d^+\rtimes L\) and \((N_d\rtimes L)^\circ\).
In \(D_d\) only the first applies.
The special and even-special forms are \(C_k\) and \(E_k\), with the permitted diagram images.
The signed forms occur on an initial segment; their remaining unsigned orbits have a uniform choice of sign.

A fiber product of surjections \(q_i:L_i\to Q\) is
\(\{(x_1,x_2):q_1(x_1)=q_2(x_2)\}\).
Goursat's lemma says that every subgroup \(H\leq L_1\times L_2\) projecting onto both factors has this form.
Indeed, put \(K_1=\{x:(x,1)\in H\}\) and \(K_2=\{y:(1,y)\in H\}\).
Then \(K_i\) is normal in \(L_i\), and membership in \(H\) defines an isomorphism \(L_1/K_1\cong L_2/K_2\), giving the assertion.
In the following list, \(V_4\) is the normal Klein four subgroup of \(S_4\).

\begin{proposition}[Classical subgroup input extracted from Lu]
\label{prop:Lu}
The following lists and restrictions are consequences of
\cite[Theorems 11, 13, 16; Definitions 9--13; Examples 20--21]{Lu}.
\begin{enumerate}[label=(\roman*)]
\item QP subgroups of \(S_n\) have consecutive natural orbits.
They are products on consecutive supports of symmetric groups; pairing groups \(\pair_k\); total even subgroups of products of symmetric and pairing groups; diagonal symmetric groups; the two fiber products described below; and four transitive exceptional factors in degrees \(5,6,7,8\).
\item The four exceptional factors are a dihedral group of order \(10\) on five points, the transitive \(A_5\) on six points, \(\GL_3(2)\) on seven points, and \(\AGL_3(2)\) on eight points.
The fiber products have orbit sizes \(4+3\) or \(4+4\): they are
\[
 \{(x,y)\in S_4\times S_3:\bar x=y\},\qquad
 \{(x,y)\in S_4\times S_4:\bar x=\bar y\},
 \quad S_4/V_4\cong S_3 .
\]
The diagonal factor has two equal orbits with the same permutation action.
\item For a single pairing image \(\pair_k\), the QP lifts in \(D_{2k}\) have A, D, special, or even-special form.
The lifts in \(B_{2k}\) have A form, either D form, B form, or special form.
The even-special form is not QP in \(B_{2k}\).
Special signed pairings start at coordinate \(1\) and pair consecutive coordinates.
There is in addition a signed order-\(16\) exceptional lift in rank \(4\).
\item For the single image \(S_d\), in ranks \(d\geq5\) the possible lifts are \(S_d\), \(N_d^+\rtimes S_d\), and, in type \(B\), \(N_d\rtimes S_d\) and its Coxeter-even subgroup when the image is retained.
The additional exceptional signed cores have ranks at most \(4\) for this image.
\item Couplings between the orbit factors are obtained by the simple double-cover constructions in \cite[Theorem 16]{Lu}.
Thus they must be retained when intersecting with a maximal parabolic subgroup; no direct-product assumption on a subgroup of that parabolic is imposed.
\end{enumerate}
\end{proposition}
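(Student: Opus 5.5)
The statement is Proposition~\ref{prop:Lu}, which is presented as a digest of Lu's classification. The plan is therefore to derive each item by specializing Lu's general statements to the cases used here, not to reprove the classification. I would go through Lu's construction of QP subgroups of signed permutation groups: a QP subgroup is built orbit by orbit from the minimum double coset. By Proposition~\ref{prop:QPfacts}(iv), stripping off the largest signed symbol of an orbit leaves a QP subgroup of a smaller standard parabolic. The image of a QP subgroup under \(p\) is QP in \(S_n\) by Proposition~\ref{prop:QPfacts}(ii). So the analysis splits into two stages: classify the unsigned images, then classify the signed lifts over each image.

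\textbf{Unsigned images, items (i)--(ii).} I would invoke \cite[Theorem 11]{Lu} directly. Its statement is that a QP subgroup of \(S_n\) is, after conjugation to minimum form, a product over consecutive blocks of the following: symmetric groups, \(\pair_k\), total-even subgroups, diagonal copies, and the listed sporadic transitive groups. I would translate Lu's ``consecutive orbit'' normalization into the statement that the natural orbits are intervals. I would then match Lu's sporadic list with the dihedral group of order \(10\), \(A_5\) on six points, \(\GL_3(2)\) and \(\AGL_3(2)\). For the two fiber products, I would apply Goursat's lemma as recalled above. The only surjections from \(S_4\) onto a symmetric group that still allow a QP coupling are \(S_4\to S_4/V_4\cong S_3\) and the identity. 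The identity case gives the diagonal factor, and the \(S_3\) quotient gives the \(4+3\) and \(4+4\) cases.

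\textbf{Signed lifts, items (iii)--(iv).} For a fixed image I would use \(H\cap N_n\), which is an \(S_n\)-image-stable subgroup of \(C_2^n\). For the image \(\pair_k\) this module is small: the candidates are \(1\), \(N^+\), \(N\), \(\langle c_i\rangle\), and the kernel of \(\eta_k\) on it. These give the A, D, B, special and even-special forms. Lu's Theorem 13 and Definitions 9--13 then say which extension classes are QP. The non-QP status of \(E_k\) in \(B_{2k}\) I would check by exhibiting a violation of \eqref{eq:QP2}, taking \(r=f_1\) and \(s=s_0\) at the minimum coset. I would take the order-\(16\) exception from Example 20. For the image \(S_d\) with \(d\ge5\), the \(S_d\)-submodules of \(C_2^d\) are only \(0\), \(\langle f_1\cdots f_d\rangle\), \(N_d^+\) and \(N_d\), and \(H^1(S_d,C_2^d)\) is controlled by Shapiro's lemma. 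Together these give the stated lifts and the Coxeter-even subgroup via Proposition~\ref{prop:QPfacts}(iii). The low-rank exceptions I would defer to Example 21.

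\textbf{Couplings, item (v).} This is Lu's Theorem 16, restated. The point to emphasize is that intersecting with a maximal parabolic keeps the double-cover couplings, so one cannot assume a direct product.

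I expect the main obstacle to be bookkeeping rather than mathematics. Lu's normalizations (which coordinates are signed, and the orientation of the pairings) must be matched with the conventions \(s_0=f_1\) and \(t=f_1f_2(12)\) used here. In particular, I would have to confirm that ``special signed pairings start at coordinate \(1\)'' agrees with Lu's minimum-coset representative, and not with its image under \(\diagauto\).
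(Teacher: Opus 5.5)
\textbf{Verdict: there is a genuine gap.} Your overall plan, specializing Lu rather than reproving him, is the paper's, and wherever you simply cite, you agree with it. One correction on sources: the paper takes (iii)--(v) from Lu's Theorem~16, and uses Theorems~11 and~13 only for the unsigned lists in (i)--(ii). The gap is in the independent layer you add for (iii)--(iv), which cannot produce exhaustive lists as written.

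\textbf{The kernel enumeration is incomplete.} For \(k\geq3\), the \(\pair_k\)-stable subgroups of \(N_{2k}\) also include \(\langle f_1\cdots f_{2k}\rangle\). They also include the preimage of the all-ones vector under the pair-sum map \(N_{2k}\to\F_2^k\).

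\textbf{A kernel does not determine a lift.} So the correspondence ``kernel \(\mapsto\) form'' you rely on is false:
\begin{itemize}
\item The two D forms \(N_{2k}^+\rtimes\pair_k\) and \((N_{2k}\rtimes\pair_k)^\circ\) share the kernel \(N_{2k}^+\).
\item The group \(\langle f_{2i-1}a_i,b_i\rangle\cong C_4\wr S_k\) has the same kernel \(\langle c_i\rangle\) and the same image as \(C_k\). It has no involution mapping to \(a_1\) under \(p\), so it is not conjugate to \(C_k\).
\item For \(k=2\), the kernel \(\langle z\rangle\) carries both \(E_2=\langle z\rangle\times\pair_2\) and the order-\(16\) exceptional lift.
\end{itemize}
To make this route work you would need every class in \(H^1(\pair_k,N_{2k}/K)\) for every stable \(K\), each with a QP verdict.

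\textbf{The image \(S_d\) has a similar omission.} Your submodule list is right, but ``together these give the stated lifts'' skips the kernel \(\langle z\rangle\). Put \(H=\langle z\rangle\times S_d\). For \(d=3\) this is QP; it is the subgroup \(K\) of the \(B_3\) models. For \(d=5\) it fails QP1: the reflection \(f_4f_5(45)\) sends the height-three coset \(f_3H\) to the distinct height-three coset \(f_1f_2H\). Excluding it is QP information that no submodule or Shapiro computation supplies.

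\textbf{The one QP check you propose cannot succeed.} At the minimum coset \(x_0\), QP2 holds automatically. If \(sx_0=x_0\), then \(h(srx_0)<h(sx_0)=0\) is impossible. Otherwise \(h(sx_0)=1\), so \(h(srx_0)<1\) forces \(srx_0=x_0\), that is, \(rx_0=sx_0\). With \(r=f_1=s_0\) the implication is tautological anyway. A violation for \(E_k\) must be found at a higher coset, and you give none.

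\textbf{What the paper does instead.} It needs neither an enumeration nor axiom checks. It reads (iii)--(v) off Lu's Theorem~16: keep only the factors with the prescribed unsigned image, and apply the inductive, projective, and simple double-cover operations. The signed exceptional factors of ranks \(6,7,8\) drop out because their images are the transitive exceptional groups. The rank-four pairing lift is retained explicitly. The exclusion of \(E_k\) from \(B_{2k}\) is part of that output. You should either adopt this filtering argument or complete the cohomological enumeration with QP verdicts for every class.

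\textbf{A smaller point in (i)--(ii).} Goursat's lemma does not decide which common quotients admit a QP coupling; that is the content of Lu's Theorems~11 and~13. Also, the identity case is not confined to \(S_4\): the diagonal family contains diagonal \(S_i\) for every \(i\).
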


Items (i)--(ii) are the orbit and factor lists of Lu's Theorems 11 and 13.
Items (iii)--(v) are the restrictions of Theorem 16 to the specified unsigned images, rather than additional classification theorems quoted from Lu.
To obtain them, retain only factors with that image and apply the inductive, projective, and simple double-cover operations in that theorem.
The signed exceptional factors in ranks six, seven, and eight have respectively the transitive exceptional images in those degrees, so they do not survive a single symmetric or pairing image requirement; the rank-four exceptional pairing lift is retained explicitly.
The graph-extension and small-kernel consequences needed later are proved in Lemmas~\ref{lem:graphextend}, \ref{lem:Bsmallkernel}, and \ref{lem:Dprojection}.

For clarity, a double-cover coupling of two even subgroups \(L_i\) with reflection extensions
\(\widetilde L_i=\langle L_i,r_i\rangle\), \([\widetilde L_i:L_i]=2\), is the fiber product
\(\{(x_1,x_2):x_iL_i\text{ have the same parity}\}\).
It is simple when the reflection extensions can be represented by the simple reflections specified in Lu's construction.

\subsection{The character formulas used in the classical cases}
\label{subsec:charformulas}

The irreducible characters of \(B_n\) are indexed by ordered bipartitions
\((\alpha,\beta)\) with \(|\alpha|+|\beta|=n\).
Their degrees are
\(\binom n{|\alpha|}f^\alpha f^\beta\), and
\[
 \ee\chi^{(\alpha,\beta)}=\chi^{(\beta,\alpha)},\qquad
 \dd\chi^{(\alpha,\beta)}=\chi^{(\alpha^\transpose,\beta^\transpose)}.
\]
The irreducibles \(\chi^{(\lambda,\emptypar)}\) are the inflations from \(S_n\).
Induction between signed Young subgroups multiplies the two Schur-function variables separately.
For induction of an unsigned \(S_q\)-factor, the coefficient at \((\alpha,\beta)\) is \(c^\nu_{\alpha\beta}\) when that factor has character \(\chi^\nu\).

For \(D_n\), restriction of \(\chi^{(\alpha,\beta)}\), \(\alpha\ne\beta\), is the irreducible
\(\chi^{\{\alpha,\beta\}}\).
When \(n=2m\), restriction of \(\chi^{(\lambda,\lambda)}\) is
\(\chi^{[\lambda,+]}+\chi^{[\lambda,-]}\); these are the \emph{degenerate associates}.
The diagram automorphism exchanges the two associates and fixes each nondegenerate character.
We fix their signs by the convention of \cite[Section 5.1]{MZperfect}:
if \(z_m=a_1\cdots a_m\), then
\begin{equation}
 \chi^{[\lambda,+]}(z_m)-\chi^{[\lambda,-]}(z_m)=2^m f^\lambda .
 \label{eq:associate-sign}
\end{equation}
The type \(D\) Littlewood--Richardson rule, including this sign information, is given in \cite{Taylor} and \cite[(5.8)]{MZperfect}.

Let \(E^{A}_{2k}=\sum_{\lambda\vdash2k,\ o_r(\lambda)=0}s_\lambda\).
The pairing formula and Pieri rule give
\begin{equation}
 \ch\Ind_{\pair_k}^{S_{2k}}\one=E^{A}_{2k},\qquad
 E^{A}_{2k}e_q=\sum_{o_r(\lambda)=q}s_\lambda .
 \label{eq:Apieri}
\end{equation}
Transposition gives the column version.
For \(q=n-2k\), define
\begin{equation}
 \Theta_k=\Ind_{C_k\times S_q}^{B_n}(\one\boxtimes\sgn),\qquad
 \Phi_k=\Ind_{C_k\times S_q}^{D_n}(\one\boxtimes\sgn).
 \label{eq:ThetaPhi}
\end{equation}
For \(\Phi_0\), the source is the standard unsigned \(S_n\).
The following formulas are the character computations of \cite[Sections 4.2 and 5.3]{MZperfect}:
\begin{align}
 \Theta_k&=\sum_{o_r(\alpha)+o_r(\beta)=q}\chi^{(\alpha,\beta)},\label{eq:ThetaSupport}\\
 \Phi_k&=\sum_{\substack{\alpha\ne\beta\\o_r(\alpha)+o_r(\beta)=q}}
             \chi^{\{\alpha,\beta\}}
       +\sum_{\substack{\lambda\vdash n/2\\2o_r(\lambda)=q}}\chi^{[\lambda,\epsilon_{\lambda,k}]} .
 \label{eq:PhiSupport}
\end{align}
The first sum in \eqref{eq:PhiSupport} is over unordered pairs; the second is zero for odd \(n\).
The sign \(\epsilon_{\lambda,k}\) depends on the embedding and is computed by the cited type \(D\) rule.
For the fixed consecutive embedding it is \((-1)^{n/2}\) in the row family.
In particular,
\begin{equation}
 \Ind_{C_m}^{D_{2m}}\one
 =\sum_{\substack{\alpha\ne\beta\\\alpha,\beta\ {\rm even\!-\!row}}}
        \chi^{\{\alpha,\beta\}}
  +\sum_{\substack{\lambda\vdash m\\\lambda\ {\rm even\!-\!row}}}\chi^{[\lambda,+]} .
 \label{eq:Ctop}
\end{equation}
One way to verify the uniform sign in the row family is to apply the signed Littlewood--Richardson rule to its unique even-row predecessor: a vertical strip of size \(o_r(\lambda)\) contributes the same sign \((-1)^{n/2}\).
These formulas concern ordinary special subgroups; the character of \(E_m\) will be proved in Lemma~\ref{lem:half}.

\subsection{Hecke modules, bar operators, and \texorpdfstring{\(W\)}{W}-graphs}
\label{subsec:Heckedef}

Set \(\A=\Z[v,v^{-1}]\) and \(a=v-v^{-1}\).
The equal-parameter Hecke algebra \(\HH(W)\) is generated by \(T_s\), \(s\in S\), subject to the braid relations and
\[
 T_s^2=1+aT_s,\qquad
 \overline v=v^{-1},\qquad \overline{T_s}=T_s^{-1}=T_s-a.
\]
The displayed assignments define its bar involution.
A \emph{bar operator} on a free \(\A\)-module \(M\) is an \(\A\)-antilinear involution \(\psi\) such that
\(\psi(hm)=\bar h\,\psi(m)\).
An ordered standard basis \(\{m_x:x\in X\}\), together with
\(\psi(m_x)\in m_x+\sum_{y<x}\A m_y\), is a triangular bar structure.
Its \emph{canonical basis} is the basis satisfying
\begin{equation}
 \psi(C_x)=C_x,\qquad
 C_x\in m_x+\sum_{y<x}v^{-1}\Z[v^{-1}]m_y .
 \label{eq:canonical-def}
\end{equation}
Such a basis, when it exists, is unique.

An \emph{integral \(W\)-graph}, in the convention used in this paper, is a finite set \(X\), labels
\(\tau(x)\subseteq S\), and integer weights \(\omega(x,y)\), for which
\begin{equation}
 T_s C_x=
 \begin{cases}
 vC_x,&s\notin\tau(x),\\
 -v^{-1}C_x+\displaystyle\sum_{s\notin\tau(y)}\omega(x,y)C_y,&s\in\tau(x)
 \end{cases}
 \label{eq:Wgraph-def}
\end{equation}
satisfies the Hecke relations.
Weights over a splitting field define a \(W\)-graph over that field.
We do not impose positivity of the weights.
A \emph{canonical \(W\)-graph} is one obtained from a canonical basis in this sense.
Specialization means setting \(v=1\); preserving a natural lattice means that this specialization is isomorphic over \(\Z W\) to the specified induction lattice.

\begin{proposition}[Ordinary QP Hecke theory]
\label{prop:barinput}
For a finite transitive QP set \(X\), the following formulas define the Rains--Vazirani module \(M(X)\):
\begin{equation}
 T_s m_x=
 \begin{cases}
 m_{sx},&h(sx)>h(x),\\
 m_{sx}+a m_x,&h(sx)<h(x),\\
 vm_x,&sx=x.
 \end{cases}
 \label{eq:ordinaryM}
\end{equation}
The module \(N(X)\) has fixed-point scalar \(-v^{-1}\).
For finite classical \(W\), both modules have bar operators fixing their minimum vectors and triangular with respect to the QP Bruhat order.
They have canonical bases and integral \(W\)-graphs.
\end{proposition}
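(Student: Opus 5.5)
The statement to prove is Proposition~\ref{prop:barinput}. It consists of three claims. First, the formulas \eqref{eq:ordinaryM} define an \(\HH(W)\)-module. Second, for classical \(W\) there is a bar operator fixing the minimum vector that is triangular for the QP Bruhat order. Third, canonical bases and integral \(W\)-graphs follow. I would prove these in that order, relying on Rains--Vazirani for the first, Lu for the second, and the Marberg formalism for the third. The analogous statements for \(N(X)\) would be obtained in parallel by replacing the fixed-point scalar \(v\) with \(-v^{-1}\).

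\emph{Module structure.} I would check the quadratic relation one generator at a time; this uses only the scaled condition \(|h(sx)-h(x)|\le1\).
\begin{itemize}
\item If \(h(sx)>h(x)\), then \(T_s^2m_x=T_sm_{sx}=m_x+am_{sx}\), which equals \((1+aT_s)m_x\).
\item If \(h(sx)<h(x)\), the same identity follows by symmetry.
\item If \(sx=x\), then \(v^2=1+av\) holds trivially.
\end{itemize}
The braid relations are the substantive part. Restricting to a rank-two standard parabolic subgroup preserves the QP property orbit by orbit, by Proposition~\ref{prop:QPfacts}(ii). This reduces the braid check to finite dihedral QP orbits. Those orbits are classified by \eqref{eq:QP1} and \eqref{eq:QP2}: each is either a regular orbit or a coset space of a reflection subgroup with its unique minimum, and the braid relation can then be verified directly on each such orbit. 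This is the Rains--Vazirani argument.

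\emph{Bar operator.} I would define \(\psi(m_{x_0})=m_{x_0}\) at the minimum \(x_0\) from Proposition~\ref{prop:QPfacts}(i). I would then extend by \(\psi(hm_{x_0})=\bar h\,m_{x_0}\). This is well defined exactly when \(\bar h\) annihilates \(m_{x_0}\) for every \(h\) in the annihilator of \(m_{x_0}\), i.e.\ when that annihilator left ideal is bar-stable. Establishing this is the main obstacle. Rains and Vazirani conjectured it in general, and it is known for finite classical types by Lu's theorem. I would therefore invoke \cite{Lu}, using the classification of classical QP subgroups listed in Proposition~\ref{prop:Lu}. Lu builds the bar operator inductively through the same operations that generate these subgroups: parabolic induction, quotient maps, and simple double covers. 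So I would verify that each operation transports a bar-compatible annihilator, and check the finitely many exceptional factors of Proposition~\ref{prop:Lu}(ii)--(iii) directly.

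\emph{Triangularity.} I would prove \(\psi(m_x)\in m_x+\sum_{y<x}\A m_y\) by induction on \(h(x)\). Choose \(s\) with \(h(sx)<h(x)\). Then
\[
\psi(m_x)=\overline{T_s}\,\psi(m_{sx})=(T_s-a)\psi(m_{sx}).
\]
Applying \(T_s\) to the lower terms \(m_y\), \(y<sx\), produces terms indexed by \(sy\) and \(y\). The lifting property of the QP Bruhat order, which follows from \eqref{eq:QP2}, shows that these all lie below \(x\).

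\emph{Canonical basis and \(W\)-graph.} Given a triangular \(\psi\), the canonical basis \eqref{eq:canonical-def} exists and is unique by the standard Lusztig lemma, proved by induction along the order. The \(W\)-graph structure \eqref{eq:Wgraph-def} with integer weights is then \cite{MarbergBar}. The argument goes as follows:
\begin{itemize}
\item Set \(\tau(x)\) to be the set of \(s\) with \(sx<x\), or with \(sx=x\) in the appropriate module.
\item The element \((T_s+v^{-1})C_x\) is bar-invariant.
\item Expanding it in the canonical basis yields coefficients \(\omega(x,y)\in\Z\), namely the \(v^{-1}\)-coefficients of the structure polynomials.
\end{itemize}
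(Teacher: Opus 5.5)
Your three stages cite exactly what the paper cites. The paper's justification consists only of \cite[Theorem 7.1]{RV}, \cite[Theorem 18]{Lu}, and \cite[Theorems 3.14, 3.16, 3.26]{MarbergBar}. However, two of the details you add to those citations are wrong as stated.

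First, your list of rank-two QP orbits is incomplete. Take $k\mid m$ with $k<m$, and inflate the regular $I_2(k)$-set along the Coxeter quotient $I_2(m)\to I_2(k)$. This is a QP $I_2(m)$-set: a $2k$-cycle with heights $0,1,\dots,k,\dots,1$. Its stabilizer $\langle (st)^k\rangle$ consists only of rotations, so it is neither a regular orbit nor a coset space of a reflection subgroup. The case $k=1$ is the two-point set $W/\langle st\rangle$, which the paper uses repeatedly ($L_+$, the joined linear terms, $M(W/\Ker\om)$). The braid relation on these orbits is still easy. The length-$m$ alternating word in $T_s,T_t$ is the $(m/k)$-th power of the length-$k$ one, so $\HH(I_2(m))$ maps onto $\HH(I_2(k))$. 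Nevertheless, your case analysis must include these orbits. A smaller point: what makes the three cases of \eqref{eq:ordinaryM} exhaustive is QP1 with $r=s$, not the scaled condition, which plays no role in the quadratic check.

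Second, your labels are reversed for the normalization fixed in the statement. With $C_x\in m_x+\sum_{y<x}v^{-1}\Z[v^{-1}]m_y$, a strict descent gives $T_sC_x=vC_x$. For example, in the two-point module $C_1=m_1+v^{-1}m_0$ and $T_sC_1=vC_1$. So in \eqref{eq:Wgraph-def} one must take $\tau(x)=\{s:h(sx)>h(x)\}$ for $M(X)$ and $\tau(x)=\{s:h(sx)\geq h(x)\}$ for $N(X)$, as the paper does in Section~\ref{sec:hecke}. It is for these $s$ that the bar-invariant vector $(T_s+v^{-1})C_x$ carries the weights $\omega(x,y)$. For a strict descent it simply equals $(v+v^{-1})C_x$. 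Your descent labels would be correct only for the opposite normalization $C_x\in m_x+\sum_{y<x}v\Z[v]m_y$. With these two corrections, your outline matches the paper's argument.
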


The module construction is \cite[Theorem 7.1]{RV}, the classical bar theorem is \cite[Theorem 18]{Lu}, and the canonical and graph statements are \cite[Theorems 3.14, 3.16, 3.26]{MarbergBar}.
The specialization of \(M(W/H)\) is \(\Z[W/H]\); that of \(N(W/H)\) is isomorphic to \(\Ind_H^W\Z_{\sgn_W|_H}\).
For \(N\), the isomorphism includes the usual height-sign change in a coset basis.

A \emph{perfect involution} is an element \(z\) of the semidirect product of a Coxeter group with its diagram automorphisms such that \(z^2=1\) and \((zt)^4=1\) for every reflection \(t\).
The relevant minimum centralizers are QP \cite{RV}.
A \emph{perfect model} uses these centralizers in its inducing terms.
The constructions of \cite{MZgraphs}, together with parabolic \(W\)-graph induction \cite{HY1,HY2}, give natural bar and canonical \(W\)-graph realizations of the perfect families used below.
On an induced module the bar is
\(\overline{h\otimes m}=\bar h\otimes\bar m\).

\begin{proposition}[Character-level existence {\cite{Gyoja,Hahn}}]
\label{prop:Gyoja}
For finite \(W\), every irreducible generic Hecke module over a splitting field has a \(W\)-graph over that field.
The Tits deformation correspondence identifies these modules with \(\Irr(W)\).
\end{proposition}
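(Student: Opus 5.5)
The plan is to reduce the statement to two classical inputs, the Tits deformation theorem and Gyoja's \(W\)-graph existence theorem in Hahn's formulation. The only work is to check that the \(W\)-graph convention \eqref{eq:Wgraph-def} used here matches the one in \cite{Gyoja,Hahn}.

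First, the deformation statement. Let \(K\) be a splitting field of characteristic zero for \(W\). Consider \(\HH_K=K(v)\otimes_{\A}\HH(W)\), possibly after adjoining a root of \(v\) in the exceptional cases \(E_7,E_8\) where Benson--Curtis/Lusztig require it. This algebra is split semisimple. The specialization \(v\mapsto1\) is a ring map \(\A\to K\) sending \(\HH(W)\) onto \(KW\). By Tits' deformation theorem it induces a bijection \(\Irr(\HH_K)\to\Irr(W)\) preserving degrees, as in \cite{GeckPfeiffer}. This gives the second sentence of the proposition.

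Second, existence of \(W\)-graphs. I will encode a \(W\)-graph on a module \(M\) without a basis. It is a decomposition \(M=\bigoplus_{I\subseteq S}M_I\) such that, for every \(s\in S\):
\begin{itemize}
\item \(T_s\) acts by \(v\) on \(M_I\) whenever \(s\notin I\);
\item for \(s\in I\), the operator \(T_s+v^{-1}\) maps \(M_I\) into \(\bigoplus_{I'\not\ni s}M_{I'}\), through maps independent of \(v\) in a suitable sense.
\end{itemize}
Once such a decomposition is given, choosing any bases of the \(M_I\) produces labels \(\tau\) and weights \(\omega\) satisfying \eqref{eq:Wgraph-def}.

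Gyoja constructs this decomposition in two steps.
\begin{enumerate}
\item He introduces the \(W\)-graph algebra \(\Omega\), generated by orthogonal idempotents \(e_I\) together with formal edge operators. There is a homomorphism \(\HH\to\Omega\) defined by the \(W\)-graph formulas, and \(\Omega\)-modules are exactly \(W\)-graph modules.
\item He shows that every irreducible \(\HH_K\)-module extends to an \(\Omega\)-module. Because \(\HH_K\) is semisimple, it suffices to embed each irreducible into a module that is already known to be an \(\Omega\)-module, namely the regular Kazhdan--Lusztig \(W\)-graph module \cite{KL}. One then has to show that the corresponding central idempotent can be realized inside \(\Omega\), so that the summand inherits the decomposition.
\end{enumerate}
Hahn's formulation \cite{Hahn} makes this precise over any splitting field. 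It yields weights in that field, and not necessarily in \(\Z\).

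The main obstacle is the second step. A direct summand of the Kazhdan--Lusztig module is generally not spanned by canonical basis elements, so the decomposition by \(\tau\)-sets does not restrict to it automatically. Gyoja handles this through the structure of \(\Omega\), as I recall his argument: the image of \(\HH_K\) is a full matrix factor compatible with the idempotents \(e_I\). I would quote that result directly rather than reprove it.

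It then remains to observe two things. The sign and normalization conventions of \cite{Hahn} become \eqref{eq:Wgraph-def} after replacing \(v\) by \(-v^{-1}\) if necessary, together with the involution \(T_s\mapsto -\overline{T_s}\). Positivity of the weights is not required in our convention. This completes the proof at the character level, without any claim about natural lattices.
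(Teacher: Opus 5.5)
Your proposal follows the paper's own argument: the paper likewise combines Tits deformation in the form of Geck--Pfeiffer with Gyoja's theorem in Hahn's formulation. The only additional step is routine convention matching, which works because \(v\mapsto -v^{-1}\), or alternatively \(T_s\mapsto -T_s^{-1}\), is an automorphism of the Hecke algebra that permutes the irreducible modules. Some of your side remarks are slightly inaccurate, but none is load-bearing since you defer to Gyoja and Hahn: in the \(v\)-normalization no root of \(v\) is needed for \(E_7,E_8\), because the required \(\sqrt q\) is \(v\) itself; a \(W\)-graph also requires each block \(M_I\to M_{I'}\) of \(T_s+v^{-1}\) to be independent of \(s\in I\setminus I'\); and embedding an irreducible in the Kazhdan--Lusztig module, or isolating its isotypic component there, does not by itself make it an \(\Omega\)-module.
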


Here the deformation correspondence is used in the form of \cite{GeckPfeiffer}.
In a \(W\)-graph basis, coefficient conjugation \(v\mapsto v^{-1}\), fixing the vertices, is a compatible bar operator.
This follows directly from \eqref{eq:Wgraph-def}.
\section{Type A Coxeter group}
\label{sec:A}

For \(0\leq q\leq n\), \(q\equiv n\pmod2\), set
\[
 R_q^A=\sum_{\lambda\vdash n,\ o_r(\lambda)=q}\chi^\lambda,
 \qquad C_q^A=\sum_{\lambda\vdash n,\ o_c(\lambda)=q}\chi^\lambda.
\]
The superscript will be suppressed when only symmetric groups occur.
These are the row and column characters.
By \eqref{eq:Apieri}, for \(n=2k+q\),
\[
 R_q=\Ind_{\pair_k\times S_q}^{S_n}(\one\boxtimes\sgn),
 \qquad C_q=\sgn_{S_n}R_q .
\]
They are allowed characters, and each of the two families is a model.

\begin{theorem}\label{thm:A}
If \(n=3\) or \(n\geq5\), then
\(\QQ(S_n)=\{\{R_q\}_q,\{C_q\}_q\}\).
For \(S_2\), the two models are \(\{\one,\sgn\}\) and \(\{\one+\sgn\}\).
For \(S_4\), there are four models: the row and column families, and
\[
 \{\chi^{(4)}+\chi^{(3,1)}+\chi^{(2,2)},\
       \chi^{(2,1,1)}+\chi^{(1^4)}\}
\]
together with its transpose.
\end{theorem}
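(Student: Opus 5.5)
We first reduce the set of allowed terms, and then solve the exact-cover problem. By Proposition~\ref{prop:corank}, every term comes either from \(J=S\) or from a maximal parabolic \(J\) with \(W_J=S_a\times S_b\), \(a+b=n\). In the first case \(\sigma\in\{\one,\sgn\}\) and \(H\leq S_n\) is QP. In the second case \(\sigma=\sgn^{\epsilon_1}\boxtimes\sgn^{\epsilon_2}\) and \(H\leq S_a\times S_b\).

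Three conditions must hold for a term to survive. First, \(\Ind_H^{S_n}(\sigma|_H)\) must be MF. Second, \((W_J,H)\) must be a Gelfand pair, by Proposition~\ref{prop:Gelfandcriteria}(iv). Third, \(H\) must appear on Lu's list in Proposition~\ref{prop:Lu}(i)--(ii): products on consecutive supports of \(S_m\) factors, \(\pair_k\), total-even subgroups, diagonals, the two fiber products, and the four exceptional factors.

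\emph{Step 1: a smallness constraint.} Since \(\Gamma_{S_n}(1)\) is the number of involutions of \(S_n\), which is much smaller than \(n!\), each term satisfies \(|H|\geq n!/\#\{w:w^2=1\}\). The MF condition is sharper. We would use Godsil--Meagher's classification of multiplicity-free permutation characters of \(S_n\), together with the Stembridge criterion for MF Schur products. Write \(\ch\Ind\) as a product of the \(h\)'s, \(e\)'s and \(E^{A}_{2k}\)'s attached to the orbit factors. For product subgroups the Pieri rules and \eqref{eq:Apieri} then show the following. A term is MF only if at most one factor is a nontrivial symmetric group beyond one \(h_a\) or \(e_a\) together with one pairing factor \(E^{A}_{2k}\) or its transpose. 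Thus the MF candidates are essentially \(h_a h_b\), \(h_ae_b\), \(E^A_{2k}e_q\), \(E^A_{2k}h_q\) (and their transposes), together with total-even and diagonal variants. Fiber products, diagonals and exceptional factors are handled one at a time. They either give repeated constituents, which we check via the double-coset count \eqref{eq:MackeyGelfand}, or they occur only for small \(n\le8\), where Proposition~\ref{prop:finite} and the archive computation settle the question.

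\emph{Step 2: the exact cover.} Now consider an arbitrary model. The trivial character \(\chi^{(n)}\) lies in exactly one term. Likewise \(\chi^{(1^n)}\) lies in exactly one term, and \(\chi^{(n-1,1)}\) must be covered too. We argue by looking at which terms contain \(\chi^{(n)}\). Each surviving candidate has support of the form \(\{o_r(\lambda)=q\}\), \(\{o_c(\lambda)=q\}\), a two-row or hook set \(h_ah_b\), \(h_ae_b\), or a total-even half. For \(n\ge5\) we then show a sequence of facts. A two-row or hook term forces an uncovered or doubly covered shape: for instance \(h_{n-1}h_1\) contains \((n-1,1)\), and then some shape with \(o_r=1\) can no longer be covered disjointly. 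The same happens for mixed row/column terms, since \(R_q\) and \(C_{q'}\) intersect whenever both contain a shape such as \((2,1^{n-2})\) or \((3,1^{n-3})\). It follows that a model containing one \(R_q\) consists only of \(R\)'s, and similarly for \(C\)'s.

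\emph{Main obstacle.} The hardest part is this cover-rigidity argument for general \(n\), including ruling out total-even subgroups such as \((\pair_k\times S_q)^\circ\). Their support splits \(R_q\) or \(C_q\) into halves, and these must be shown either to be non-MF or to leave a set that cannot be completed by the other candidates. We would first try to show that for \(n\ge5\) every half-support contains both \(\chi^\lambda\) and \(\chi^{\lambda^\transpose}\)-type shapes, which collide with any complementary term. Small cases \(n\le4\) are done directly. For \(S_2\) this is trivial. For \(S_4\), the subgroup \(S_3\times S_1\) gives \(\chi^{(4)}+\chi^{(3,1)}\), and the dihedral group \(D_8=\pair_2\) gives \(\chi^{(4)}+\chi^{(2,2)}\) (and so on). Enumerating these and solving the exact cover produces exactly the four listed families. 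The exceptional family uses the QP subgroup \(S_3\) extended appropriately, namely \(\Ind\) from \(A_4\)-type or total-even subgroups with sum \(\chi^{(4)}+\chi^{(3,1)}+\chi^{(2,2)}\), which we would confirm by the QP test of Proposition~\ref{prop:finite}.
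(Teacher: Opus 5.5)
You have the right outer structure, and it is the paper's: corank reduction, Lu's list, Pieri calculus, and small ranks by the finite catalogue. However, the step that actually proves the theorem for large \(n\), the exact-cover rigidity, is only asserted.

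Noting that a particular pair such as \(R_{n-2}\) and one \(C_p\) share a hook does not exclude a model that takes row layers for some \(q\), column layers for others, and patches the rest with two-row, two-column or hook terms. Most pairs \(R_q,C_p\) are disjoint, and one intersection says nothing about the other layers.

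The missing idea is a two-part mechanism.
\begin{enumerate}
\item Let \(n\geq12\). Exclude diagonals, fiber products, exceptional factors, products of two pairing factors, and the same-oriented \(E^A_{2k}h_q\); the last has a repeated \((n-2,2)\) for \(k,q\geq2\), so it does not belong on your candidate list. Then every MF allowed character other than \(R_q\), \(C_q\), \(R_q+C_q\) is \emph{thin}: it is a positive sum of \(h_ah_b,e_ae_b,h_ae_b\), hence supported on partitions not containing \((3,2,1)\). So each \(\lambda\supseteq(3,2,1)\) must be covered by a layer. Since it lies in exactly \(R_{o_r(\lambda)}\) and \(C_{o_c(\lambda)}\), it imposes \(x_{o_r(\lambda)}+y_{o_c(\lambda)}=1\). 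Here \(x_q,y_p\) record whether \(R_q\), \(C_p\) are selected, alone or inside some \(R_q+C_q\).
\item Explicit partitions show that the bipartite graph on \(r_q,c_p\) (\(q,p\leq n-4\), \(q\equiv p\equiv n\bmod 2\)) carrying these equations is connected. This forces a single orientation and excludes \(R_q+C_q\) in that range. Only then do the thin leftovers \((3,1^{n-3}),(2,1^{n-2}),(1^n)\) force \(R_{n-2},R_n\).
\end{enumerate}
Without something of this kind, ``a model containing one \(R_q\) consists only of \(R\)'s'' does not follow.

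Your planned treatment of total-even sources targets a phenomenon that does not occur in type A. By the index-two formula, \((\pair_k\times S_q)^\circ\) with the restriction of \(\one\boxtimes\sgn\) induces to the larger character \(R_q+C_q\). With the trivial character it induces to \(E^A_{2k}h_q\) plus its transpose, which is not MF. Nothing splits \(R_q\) into halves; that is the type B phenomenon \(\Theta_1=U_n+\ee U_n\).

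The finite range is also too small. An exceptional factor of degree \(r\) with a tail \(q<b\) can give MF columns up to \(n=r+b-1\), which is \(11\) for \(\AGL_3(2)\) (Lemma~\ref{lem:sporadic}); for example \(F_8h_3\) is MF. So the catalogue must run through \(n=11\), not \(n\leq8\).

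Finally, in \(S_4\) the extra term \(\chi^{(4)}+\chi^{(3,1)}+\chi^{(2,2)}\) is \(\Ind_{S_2\times S_2}^{S_4}\one\), with Frobenius characteristic \(h_2h_2\). Its partner is \(\Ind_{S_3}^{S_4}\sgn\). An \(A_4\) source gives \(\chi^{(4)}+\chi^{(1^4)}\) instead.
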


\begin{corollary}[Gelfand triples in type A]
\label{cor:AGelfand}
Let \(n=2k+q\), where \(q\equiv n\pmod2\), and put
\[
 H_{k,q}=\pair_k\times S_q,\qquad
 \lambda_{k,q}^{\row}=\one\boxtimes\sgn_{S_q},\qquad
 \lambda_{k,q}^{\col}=
   (\sgn_{S_{2k}}|_{\pair_k})\boxtimes\one .
\]
Then \((S_n,H_{k,q},\lambda_{k,q}^{\row})\) and
\((S_n,H_{k,q},\lambda_{k,q}^{\col})\) are linear Gelfand triples.
If
\[
 g_{n,q}=\#\{\lambda\vdash n:o_r(\lambda)=q\},
\]
then their twisted double-coset algebras are both isomorphic to
\(\C^{g_{n,q}}\).
Moreover,
\[
 (S_{2k}\times S_q,H_{k,q})
\]
is an ordinary Gelfand pair whose double-coset algebra is
\(\C^{p(k)}\).
Every inducing term in the two exceptional \(S_4\) models of
Theorem~\ref{thm:A} is likewise a linear Gelfand triple.
\end{corollary}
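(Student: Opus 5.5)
The plan is to read every assertion of Corollary~\ref{cor:AGelfand} off the explicit decompositions \eqref{eq:Apieri}, together with Proposition~\ref{prop:Gelfandcriteria}(i). No further subgroup analysis is needed.

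\textbf{The two triples.} By \eqref{eq:Apieri} and induction in stages through \(S_{2k}\times S_q\), the Frobenius characteristic of \(\Ind_{H_{k,q}}^{S_n}\lambda_{k,q}^{\row}\) is
\[
 E^A_{2k}e_q=\sum_{o_r(\lambda)=q}s_\lambda .
\]
So this induced character is \(R_q\), which is multiplicity-free with exactly \(g_{n,q}\) constituents.

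For the column character, write \(\lambda^{\col}_{k,q}=(\sgn_{S_n}|_{H_{k,q}})\cdot(\one\boxtimes\sgn_{S_q})\). This holds because \(\sgn_{S_n}\) restricts to \(\sgn_{S_{2k}}\boxtimes\sgn_{S_q}\) on \(S_{2k}\times S_q\). The tensor identity \(\Ind(\sgn|_H\otimes\mu)=\sgn\otimes\Ind\mu\) then gives
\[
 \Ind_{H_{k,q}}^{S_n}\lambda^{\col}_{k,q}=\sgn_{S_n}R_q=C_q .
\]
Tensoring with \(\sgn\) is a bijection \(\chi^\lambda\mapsto\chi^{\lambda^\transpose}\) on \(\Irr(S_n)\), so \(C_q\) is also multiplicity-free with \(g_{n,q}\) constituents. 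Proposition~\ref{prop:Gelfandcriteria}(i) then says both corner algebras are commutative and isomorphic to \(\prod\operatorname{Mat}_1(\C)=\C^{g_{n,q}}\).

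\textbf{The ordinary pair.} Inside \(S_{2k}\times S_q\), write
\[
 \Ind_{H_{k,q}}^{S_{2k}\times S_q}\one=\Ind_{\pair_k}^{S_{2k}}\one\boxtimes\Ind_{S_q}^{S_q}\one .
\]
By \eqref{eq:Apieri} this is \(\sum_{\mu\vdash 2k\text{ even-row}}\chi^\mu\boxtimes\one\). The external tensor products are pairwise distinct irreducibles, so the character is multiplicity-free. Even-row partitions of \(2k\) correspond to partitions of \(k\) by halving rows, so there are \(p(k)\) constituents, and Proposition~\ref{prop:Gelfandcriteria}(i) again gives \(\C^{p(k)}\). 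Alternatively, this follows from Proposition~\ref{prop:Gelfandcriteria}(iv) applied with \(L=S_{2k}\times S_q\), since \(\lambda^{\row}_{k,q}\) extends to \(\one\boxtimes\sgn\); that route gives the Gelfand property but not the count.

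\textbf{The exceptional \(S_4\) terms.} Each inducing term in the two exceptional \(S_4\) models of Theorem~\ref{thm:A} is a summand of a model, so its induced character is a subcharacter of \(\Gamma_{S_4}\) and hence multiplicity-free. The defining property of a model therefore already gives the claim, and Proposition~\ref{prop:Gelfandcriteria}(i) applies directly.

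The only point needing care is the sign bookkeeping in identifying \(\lambda^{\col}_{k,q}\) with the sign twist of \(\lambda^{\row}_{k,q}\), specifically the restriction of \(\sgn_{S_n}\) to the \(S_q\) factor. Beyond that there is no real obstacle.
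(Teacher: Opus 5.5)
Your proposal is correct and follows essentially the same route as the paper. You identify the two induced characters as \(R_q\) and \(C_q=\sgn_{S_n}R_q\), count the \(p(k)\) distinct constituents of \(\Ind_{\pair_k}^{S_{2k}}\one\boxtimes\one\), and apply Proposition~\ref{prop:Gelfandcriteria}(i). Your additions are the explicit sign bookkeeping for \(\lambda^{\col}_{k,q}\), which the paper leaves implicit, and the remark that every model term is a subcharacter of \(\Gamma_{S_4}\); the latter is the same point as the paper's appeal to the MF decompositions in Theorem~\ref{thm:A}.
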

\begin{proof}
The two induced characters are \(R_q\) and \(C_q\), respectively.
Their displayed decompositions are MF and contain exactly \(g_{n,q}\)
constituents, so Proposition~\ref{prop:Gelfandcriteria}(i) gives the
first two assertions.
For the ordinary pair, its permutation character is
\[
 \Ind_{\pair_k}^{S_{2k}}\one\boxtimes\one
 =\sum_{\mu\vdash k}\chi^{2\mu}\boxtimes\one,
\]
which has \(p(k)\) distinct constituents.
The last assertion follows from the two MF decompositions in
Theorem~\ref{thm:A}.
\end{proof}

\begin{remark}[Comparison with the Gelfand-pair literature]
The pair \((S_{2k},\pair_k)\) and its zonal spherical functions are
classical \cite[Chapter VII]{MacdonaldBook}.
The triples above are the individual layers of the
Inglis--Richardson--Saxl model \cite{IRS}; thus their MF property is not
claimed here as new.
The ordinary permutation case is also covered by the complete
classification of Godsil and Meagher \cite{GodsilMeagher}, and the
large-degree classification of general MF induced characters in
\cite{Turek} gives a further comparison.
The new assertion of Theorem~\ref{thm:A} is that, after imposing QP
stabilizers, ambient extension, and exact-cover equivalence, these
layers admit no other assembly except in the stated low ranks.
\end{remark}

\subsection{A complete candidate reduction}

Every model term is MF.
By Proposition~\ref{prop:Gelfandcriteria}(iv), its source subgroup is
first constrained by an ordinary Gelfand-pair condition inside its
ambient parabolic subgroup.
The natural orbits of a subgroup of \(S_n\) determine its parabolic closure.
Proposition~\ref{prop:corank} therefore permits at most two natural orbits.
Apply the entire list in Proposition~\ref{prop:Lu}.
The possible sources are one or two transitive factors, total even subgroups of two symmetric or pairing factors, diagonal symmetric groups, and the two fiber products.
Independent ambient signs are retained on the two orbits.

\begin{lemma}\label{lem:sporadic}
For the four transitive exceptional factors of degrees \(r=5,6,7,8\), the permutation Frobenius characteristic is
\begin{equation}
 F_r=s_{(r)}+s_{(a,b)}+s_{(a,b)^\transpose}+s_{(1^r)},
 \quad (r;a,b)=(5;3,2),(6;3,3),(7;4,3),(8;4,4).
 \label{eq:sporadic}
\end{equation}
If \(q\geq b\), neither \(F_rh_q\) nor \(F_re_q\) is MF.
Consequently an MF candidate containing an exceptional factor and a symmetric or alternating tail has \(n\leq11\).
\end{lemma}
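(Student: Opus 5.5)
The plan is to compute the four permutation characters directly, then exhibit one explicit repeated constituent in each product, and finally read off the degree bound.

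\emph{Step 1: the characteristics \(F_r\).} Let \(G_r\leq S_r\) be the exceptional factor. The degrees already fit \eqref{eq:sporadic}: \([S_r:G_r]=120/10,\ 720/60,\ 5040/168,\ 40320/1344\), that is \(12,12,30,30\). These agree with \(1+f^{(a,b)}+f^{(a,b)^\transpose}+1\), since \(f^{(3,2)}=f^{(3,3)}=5\) and \(f^{(4,3)}=f^{(4,4)}=14\). All four groups lie in \(A_r\), because their generators are even permutations. Hence \(\sgn\) occurs exactly once, and \(\omega\)-duality shows that \(F_r\) is invariant under transposition \(\lambda\mapsto\lambda^\transpose\).

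To pin down the remaining constituents I will compute \(\langle \Ind_{G_r}^{S_r}\one,\chi^\lambda\rangle=|G_r|^{-1}\sum_{g\in G_r}\chi^\lambda(g)\), summing over the known cycle-type distribution of \(G_r\). The values \(\chi^\lambda(g)\) come from the Murnaghan--Nakayama rule, and only the two-row shapes \(\lambda=(r-j,j)\) need to be evaluated. Equivalently, the number of \(G_r\)-orbits on \(j\)-subsets gives \(\langle F_r,h_{r-j}h_j\rangle\). This forces the two-row part of \(F_r\) to be \(s_{(r)}+s_{(a,b)}\), and transposition symmetry together with the degree count then leaves no room for anything else. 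This step is a finite check and could also be confirmed from the tables of marks in \cite{TomLib}.

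\emph{Step 2: a repeated constituent.} Put \(\nu=(a+q,b)\). By the Pieri rule, \(s_{(r)}h_q=\sum_{j\leq\min(r,q)}s_{(r+q-j,j)}\). This contains \(\nu=(r+q-b,b)\) exactly when \(b\leq q\), which holds since \(b\leq a<r\). The product \(s_{(a,b)}h_q\) contains \(\nu\) as well, by adding all \(q\) cells to the first row, which is a horizontal strip. So for \(q\geq b\) the constituent \(s_\nu\) has multiplicity at least two in \(F_rh_q\). Applying the involution \(\omega\), which fixes \(F_r\) and exchanges \(h_q\) with \(e_q\), shows that \(s_{\nu^\transpose}\) is repeated in \(F_re_q\).

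\emph{Step 3: the consequence.} Consider a candidate whose source has an exceptional orbit of size \(r\) and a symmetric or alternating tail on \(q\) points, with arbitrary ambient signs. A sign twist on the exceptional orbit does not change \(F_r\), because \(G_r\leq A_r\). The tail contributes \(h_q\) or \(e_q\), or, for an alternating tail, \(h_q+e_q\), which contains both. In each case the induced character from the source to \(S_r\times S_q\) already has a repeated constituent whenever \(q\geq b\). By the observation after Proposition~\ref{prop:finite}, this repetition survives induction to \(S_n\). Hence an MF candidate needs \(q<b\leq4\), so \(n=r+q\leq 8+3=11\).

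The main obstacle is Step 1, namely confirming the exact two-row content of \(F_r\), which requires the orbit counts on \(j\)-subsets of the four groups. Steps 2 and 3 only use the constituents \(s_{(r)}\) and \(s_{(a,b)}\) (and their transposes), so it suffices to know that these occur in \(F_r\).
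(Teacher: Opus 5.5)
Your proposal is correct and is essentially the paper's proof. You use the same constituent \(\nu=(a+q,b)=(r+q-b,b)\), which occurs in both \(s_{(r)}h_q\) and \(s_{(a,b)}h_q\); transposition invariance of \(F_r\) handles \(e_q\); and \(n\leq r+b-1\leq 11\) follows. Your Step~1 (two-row multiplicities from orbit counts on \(j\)-subsets, then \(\omega\)-symmetry from \(G_r\leq A_r\) and the indices \(12,12,30,30\)) is just a more economical version of the finite averaging the paper cites.

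There is one wording slip in Step~3. The repetition appears in the induction to \(S_{r+q}\), whose characteristic is the product \(F_rh_q\). Induction only to the Young subgroup \(S_r\times S_q\) gives the outer product of \(\Ind_{G_r}^{S_r}\one\) with a linear character (or with \(\one+\sgn\)), and that character is MF. So the claim as written is false, but the fix is to say \(S_{r+q}\), and then no appeal to further induction is needed since \(n=r+q\).
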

\begin{proof}
For \eqref{eq:sporadic}, average the irreducible symmetric-group characters over the four explicit permutation groups of orders \(10,60,168,1344\).
The Murnaghan--Nakayama rule gives the four displayed supports with coefficient one.

For the bound, let \(\nu=(r+q-b,b)\).
It occurs in both \(s_{(r)}h_q\) and \(s_{(a,b)}h_q\): the first assertion uses \(q\geq b\), and the second follows from the horizontal-strip interlacing inequalities
\(r+q-b\geq a\geq b\).
Thus its multiplicity in \(F_rh_q\) is at least two.
Since \(F_r\) is invariant under transposition, the same holds for \(F_re_q\).
Hence \(n=r+q\leq r+b-1\leq11\).
\end{proof}

If the other factor is a pairing group on at least four points, the single-row or single-column summand in \(F_r\), with the appropriate orientation, gives a repeated constituent by \cite[Lemma 3.1(a,b)]{MZperfect}.
Two exceptional factors are excluded by Lemma~\ref{lem:sporadic}, since the second has a single-row summand of degree at least five.
Their alternating nature introduces no further total-even case.

For \(n\geq12\), call a character \emph{thin} if every partition in its support avoids the diagram \((3,2,1)\).
The diagonal candidates have Frobenius characteristics
\[
 \sum_{\lambda\vdash i}s_\lambda s_\lambda,\qquad
 \sum_{\lambda\vdash i}s_\lambda s_{\lambda^\transpose}.
\]
For \(i\geq2\) the first has a repeated \((2i-2,2)\), from \(\lambda=(i)\) and \((i-1,1)\).
The second has the same nonzero product from \(\lambda=(i)\) and \((1^i)\).
The two fiber products have degrees \(7\) or \(8\), already use two orbits, and cannot occur in this range.
Products involving only symmetric and alternating groups are positive sums of
\(h_ah_b,e_ae_b,h_ae_b,e_ah_b\), all of which are thin.

For \(k,q\geq2\), the same-oriented product \(E^A_{2k}h_q\) has a repeated \((n-2,2)\), from \((2k)\) and \((2k-2,2)\).
Products of two genuine pairing characters, in any orientation, are not MF by \cite[Lemma 3.1(c)]{MZperfect}.
An even-pairing source is handled by its two positive extensions.
The index-two formula now proves the following reduction.

\begin{lemma}\label{lem:Acolumns}
For \(n\geq12\), every MF allowed character is thin, a single \(R_q\) or \(C_q\), or
\[
 R_q+C_q,\qquad \Supp(R_q)\cap\Supp(C_q)=\emptypar .
\]
For \(n\geq5\), an allowed term whose character is a nontrivial row or column layer \(R_q\) or \(C_q\) has, up to its permitted embeddings, subgroup
\(\pair_k\times S_q\), \(2k+q=n\).
The layer \(R_n\) comes from \(S_n\).
\end{lemma}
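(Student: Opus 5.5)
The plan is to run through the candidate source list assembled just before the statement and show that each candidate either fails to be MF or has one of the three stated shapes. By Proposition~\ref{prop:corank} and the orbit description of parabolic closures, an allowed term for \(S_n\) has a source \(H\le W_J\) with at most two consecutive natural orbits. By Proposition~\ref{prop:Lu}(i)--(ii), \(H\) is then one of the following:
\begin{enumerate}[label=(\alph*)]
\item a transitive factor on \(n\) points;
\item a direct product of two transitive factors, with independent ambient signs on the two orbits;
\item a total-even subgroup of two symmetric or pairing factors;
\item a diagonal symmetric group;
\item one of the two fiber products.
\end{enumerate}
For \(n\ge12\), the fiber products are excluded by degree. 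Exceptional factors are excluded as follows: with a symmetric or alternating tail by Lemma~\ref{lem:sporadic}, and with a pairing partner or a second exceptional factor by the single-row argument quoted from \cite[Lemma 3.1]{MZperfect}. Diagonal groups are excluded by the repeated constituent \((2i-2,2)\) computed above; for \(i=1\) the diagonal group is trivial and the term is thin.

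Next I treat the direct products of symmetric and pairing factors (case (b)), the symmetric and alternating pieces of which are already thin by the remark before the statement. If both factors are genuine pairing groups, the character is not MF by \cite[Lemma 3.1(c)]{MZperfect}. If one factor is \(\pair_k\) with \(k\ge2\) and the other is \(S_q\) with \(q\ge2\), the same-oriented sign choice gives a repeated \((n-2,2)\), so the inducing character on the tail must be oppositely oriented. By \eqref{eq:Apieri} and transposition, this yields exactly \(R_q\) or \(C_q\). A transitive \(\pair_k\) with \(2k=n\) gives \(R_0\) or \(C_0\). The small cases \(k\le1\) or \(q\le1\) reduce to thin products or to these layers, since \(\pair_1=S_2\) and \(S_1\) is trivial.

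For the total-even sources in case (c), the index-two formula writes the character as \(\Ind_L(\tau)+\Ind_L(\tau\eta)\). Here \(L\) is the full product and \(\eta\) is the sign on it, and both summands must be MF, so each lies in the list just obtained. If both summands are thin, the sum is thin. Otherwise one summand is a row layer, and twisting by the total sign turns it into the corresponding column layer. This gives \(R_q+C_q\), and MF forces the supports to be disjoint. A mixed case, with one thin summand and one layer, has to be ruled out. I would do this by checking that \(\sgn\) maps thin partitions to thin partitions, since the transpose of \((3,2,1)\) is itself, while \(R_q\) is not thin for \(n\ge12\). Hence such a mixture cannot arise from \(\eta=\sgn\) twisting.

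For the second assertion, with \(n\ge5\), I would go back through the same list without the thinness filter. A term whose character equals \(R_q\) or \(C_q\) must have \([W:H]=R_q(1)\) and the prescribed support. I would compare the support against each surviving source type. Exceptional factors, for which \(n\le11\), have supports containing a two-row partition together with its transpose, and these are not of the form \(o_r=q\). The same-oriented products and the total-even sums have been identified above. The remaining possibility for \(R_n\) is a total-sign character on \(S_n\), which must be \(\sgn\) on the full group. I expect the main obstacle to be the small-\(n\) exceptional factors in the range \(5\le n\le 11\). There I would simply compare the explicit supports from \eqref{eq:sporadic} multiplied by \(h_q\) or \(e_q\) against \(R_q\) and \(C_q\), relying on the finite data of Proposition~\ref{prop:finite} if a hand check becomes unwieldy.
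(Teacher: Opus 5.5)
Your proposal is correct and follows the paper's own argument. You cut Lu's candidate list down using the corank obstruction, the Pieri and \cite[Lemma 3.1]{MZperfect} exclusions, and the index-two formula, leaving only \(\pair_k\times S_q\) with an oppositely oriented tail and its total-even sum \(R_q+C_q\); for \(5\leq n\leq 11\) the realization assertion rests on the finite catalogue \cite{ZhangData}, exactly as in the paper.

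Three points need minor repair, none load-bearing. First, \(R_{n-2}\) and \(R_n\) are thin, so "\(R_q\) is not thin" is false there; the mixed case is excluded simply because \(\sgn\)-twisting preserves thinness. Second, a two-row shape can share its layer with its transpose, e.g.\ \((3,2)\) and \((2,2,1)\) both lie in \(R_1\) for \(n=5\); what actually rules out exceptional sources is that \(F_r\) contains both \(s_{(r)}\) and \(s_{(1^r)}\). Third, you should also treat the even-pairing and pairing-times-alternating sources, which the same two-extension argument removes.
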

\begin{proof}
The preceding exclusions leave exactly the pairing character with the opposite-oriented symmetric tail and its index-two total-even construction.
These give \(R_q,C_q\), or their sum.
At \(q=1\), the sum is not MF because \((k+1,k)\) belongs to both supports.
The case \(k=1\) is already among the symmetric factors.
This proves the first assertion.
The subgroup realization assertion follows by the same retained-factor list for \(n\geq12\).
For \(5\leq n\leq11\), retain the subgroup provenance of each column in the finite catalogue described below; every realization of these layers has exactly the stated factors and permitted embeddings \cite{ZhangData}.
\end{proof}

\subsection{Connected support equations}

\begin{lemma}\label{lem:Aconnected}
For \(n\geq12\), form a bipartite graph with vertices
\[
 r_q,c_q\qquad(0\leq q\leq n-4,\ q\equiv n\pmod2).
\]
Join \(r_q\) to \(c_p\) if some partition containing \((3,2,1)\) has \((o_r,o_c)=(q,p)\).
This graph is connected.
\end{lemma}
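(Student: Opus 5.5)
The plan is to prove connectivity by exhibiting explicit partitions containing \((3,2,1)\). The partitions will be chosen so that every vertex is joined to one of a few ``hub'' vertices of small index, and the hubs are joined to each other. Write \(\epsilon\in\{0,1\}\) for the residue of \(n\) modulo \(2\). Then the admissible indices are \(q,p\in\{\epsilon,\epsilon+2,\ldots\}\) with \(q,p\leq n-4\). The hubs will be \(c_\epsilon,c_{\epsilon+2}\) on the column side and \(r_\epsilon,r_{\epsilon+2}\) on the row side.

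\emph{Step 1: a two-parameter family.} Consider
\[
 \lambda(x,j,y)=(3+x,\,2^{1+2j},\,1^{y}),\qquad x,j,y\geq0,\quad n=5+x+4j+y .
\]
This partition contains \((3,2,1)\) whenever \(j\geq1\) or \(y\geq1\). Its row lengths are \(3+x\), \(1+2j\) copies of \(2\), and \(y\) copies of \(1\), so
\[
 o_r=y+[x\ \text{even}].
\]
Its columns have lengths \(y+2j+2\), \(2j+2\), and then \(x+1\) columns of length \(1\), so
\[
 o_c=[y\ \text{odd}]+x+1 .
\]
The parameter \(j\) shifts \(n\) by \(4\) without changing \((o_r,o_c)\). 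Hence, for fixed \(n\geq12\), the pairs
\[
 (q,p)=\bigl(y+[x\ \text{even}],\ x+1+[y\ \text{odd}]\bigr)
\]
are realized for all \(x,y\) with \(x+y\leq n-5\) and \(x+y\equiv n-5\pmod 4\).

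To cover the other class of \(x+y\) modulo \(4\), use a companion family. The first candidate is \((4+x,\,2^{2j},\,1^{y})\) with \(2j\geq2\); the second is a variant with a \((3,3)\) block such as \((3+x,3,2^{2j},1^y)\). The needed residue is obtained either by changing which rows carry the extra cell or by transposing the whole family. Transposition swaps \(o_r\) and \(o_c\) and preserves the condition of containing \((3,2,1)\), since \((3,2,1)\) is self-conjugate. So every construction for the row side yields one for the column side.

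\emph{Step 2: joining every vertex to a hub.} For a given \(r_q\), choose \(x\in\{0,1\}\) so that \(y=q-[x\ \text{even}]\) is compatible with the required residue of \(x+y\). This gives an edge from \(r_q\) to \(c_p\) with \(p\leq 3\), that is, to one of the column hubs. The extreme vertex \(r_{n-4}\) is covered directly by \((3,2,1^{n-5})\), which has \(o_r=n-4\) and small \(o_c\). By transposition, every \(c_p\) is joined to a row hub.

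\emph{Step 3: connecting the hubs.} It remains to connect the at most four hubs among themselves. I would do this by listing explicit small-odd-count partitions for each \(n\) modulo \(4\). Examples are \((3,2,1)\cup(2^{a})\) adjusted by one or two boxes, and partitions of the form \((3+2s,\,3,\,2^{t},\,1)\). The hypothesis \(n\geq12\) leaves room for these \(2\times2\) padding blocks, which change \(n\) without changing the parities.

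The main obstacle is the bookkeeping of residues modulo \(4\) for \(x+y\) in Step 1. If the padding by \(2\times 2\) blocks covers only one residue class, the companion family must be checked against every boundary case: small \(q\), \(q=n-4\), and both parities of \(n\). Near the upper bound \(n-4\) there is little room left to keep \((3,2,1)\) while adjusting residues. I expect this to require a short separate list of partitions for \(q\in\{n-6,n-4\}\).
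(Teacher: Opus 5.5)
Your Steps 1--2 are correct as far as they go; the companion family is not even needed there, since choosing $x\in\{0,1\}$ already matches the residue of $x+y$. But the connectivity is deferred to Step 3, which you do not carry out, and in one case it cannot be carried out as you describe.

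\textbf{Odd $n$.} Every edge produced by $(3+x,2^{1+2j},1^y)$ satisfies $q+p\equiv n+1\pmod 4$. So Steps 1--2 leave exactly two components, $\{r_q,c_p: q\equiv a,\ p\equiv n+1-a\pmod 4\}$ for $a=1,3$, and you must exhibit a crossing edge. Your suggested shape does supply one once checked:
\begin{itemize}
\item $(3,3,2^{(n-7)/2},1)$ has $(o_r,o_c)=(3,1)$ and crosses when $n\equiv1\pmod4$;
\item $(5,3,2^{(n-9)/2},1)$ has $(o_r,o_c)=(3,3)$ and crosses when $n\equiv3\pmod4$.
\end{itemize}

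\textbf{The case $4\mid n$ is a genuine failure of the plan.} Step 2 breaks at $q=0$: $x=0$ forces $y=-1$, and $x=1$ gives $x+y=1\not\equiv n-5\pmod 4$. No other partition repairs this. If $o_r(\lambda)=0$, every row is even, so the columns come in equal pairs $\lambda^\transpose_{2i-1}=\lambda^\transpose_{2i}$. Hence
\[
 o_c(\lambda)=2\,\#\{i:\lambda^\transpose_{2i-1}\ \text{odd}\}\equiv n\pmod 4 .
\]
Thus $r_0$, and by transposition $c_0$, is adjacent only to $c_p$ (resp.\ $r_q$) with $4\mid p$ (resp.\ $4\mid q$). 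There is therefore no edge between $\{r_0,c_0\}$ and $\{r_2,c_2\}$, and connecting the hubs among themselves by explicit small-odd-count partitions is impossible. The path must pass through a non-hub vertex. For example, $(n-4,2,2)$ joins $r_0$ to $c_{n-4}$, which is the paper's choice. Alternatively, $(6,2^{(n-6)/2})$ joins $r_0$ to $c_4$, and your transposed Step 2 joins $c_4$ to $r_2$.

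\textbf{How the paper avoids this.} It fixes the small statistic on the row side and lets the column statistic run freely. The partitions $((n+p-2)/2,(n-p)/2,1)$ and $((n+p-4)/2,(n-p)/2,2)$, plus one four-row exception for $p=1$, join $r_1$ (odd $n$) or $r_2$ (even $n$) to every admissible $c_p$ with $p>0$. A single hub then sees the whole column side, and transposition finishes. Only $c_0$ requires a special partition: $((n-2)/2,(n-2)/2,1,1)$ for $n\equiv2\pmod 4$, and for $4\mid n$ a partition such as $(n-4,2,2)$, whose column count $n-4\equiv0\pmod4$ is consistent with the parity constraint above.
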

\begin{proof}
For odd \(n\), the following partitions have \((o_r,o_c)=(1,p)\):
\[
 \lambda_p=
 \begin{cases}
 ((n+p-2)/2,(n-p)/2,1),&p\equiv n\pmod4,\\
 ((n+p-4)/2,(n-p)/2,2),&p\not\equiv n\pmod4,\ p\geq3,\\
 ((n-3)/2,(n-3)/2,2,1),&p=1,\ n\equiv3\pmod4.
 \end{cases}
\]
All contain \((3,2,1)\).
Their transposes connect every vertex to \(r_1,c_1\).
For even \(n\) and \(p>0\), use
\(((n+p-2)/2,(n-p)/2,1)\), whose statistics are \((2,p)\).
If \(n\equiv2\pmod4\), the partition
\(((n-2)/2,(n-2)/2,1,1)\) connects \(r_2\) to \(c_0\).
If \(4\mid n\), use \((n-4,2,2)\) to connect \(r_0\) to \(c_{n-4}\).
Transposition completes the connections in both cases.
\end{proof}

\begin{proof}[Proof of Theorem~\ref{thm:A} for \(n\geq12\)]
For each selected term in Lemma~\ref{lem:Acolumns}, record its row and column layers.
Write \(x_q,y_p\in\{0,1\}\) for the respective selection indicators.
A nonthin partition on an edge of Lemma~\ref{lem:Aconnected} gives
\[
 x_q+y_p=1.
\]
Connectivity forces either all \(x_q=1,y_p=0\), or all \(x_q=0,y_p=1\).
Thus the nonthin layers have a common orientation and no combined \(R_q+C_q\) occurs there.

Take the row orientation.
The layers with \(q\leq n-4\) leave only
\[
 (3,1^{n-3}),\qquad(2,1^{n-2}),\qquad(1^n).
\]
In a thin candidate, the first can only be supplied by \(h_2e_{n-2}\) or \(h_3e_{n-3}\).
The latter also supplies \((4,1^{n-4})\), already covered.
The former is exactly \(R_{n-2}\).
Any additional positive extension from an alternating or total-even source supplies an already covered two-row, two-column, or opposite hook partition.
It therefore cannot merely append the remaining sign.
The only completion is \(R_{n-2},R_n\).
Transposition gives the column completion.
\end{proof}

\subsection{Low ranks and examples}

For \(n\leq11\), the corank obstruction leaves at most two natural orbits.
Thus Proposition~\ref{prop:Lu} gives a finite catalogue: the one-orbit factors, their two-factor products and total-even couplings, the diagonal groups, and the two fiber products.
All restrictions of signs of the ambient Young factors are included.
Induction in stages, the index-two formula, and the Schur rules of Section~\ref{sec:prelim} compute their characters; direct averaging supplies the four transitive exceptional factors.
The number of distinct MF columns for \(n=2,\ldots,11\) is
\[
 3,5,11,16,26,29,41,43,54,57,
\]
and their exact-cover counts are respectively
\[
 2,2,4,2,2,2,2,2,2,2.
\]
The column supports and the retained subgroup provenance identify precisely the families in Theorem~\ref{thm:A} and verify the low-rank assertion of Lemma~\ref{lem:Acolumns}.
The catalogue generator and exact reader are supplied in \cite{ZhangData}; for \(5\leq n\leq8\), a separate all-subgroup GAP enumeration gives the same entire MF QP column set.
This completes the finite part of the proof.

\begin{example}[A row model of \(S_5\)]
The three terms have supports
\[
 R_1=\chi^{(5)}+\chi^{(4,1)}+\chi^{(3,2)}+\chi^{(2,2,1)},\quad
 R_3=\chi^{(3,1,1)}+\chi^{(2,1,1,1)},\quad
 R_5=\chi^{(1^5)}.
\]
Their degrees are \(15,10,1\), adding to \(26=\Gamma_{S_5}(1)\).
For \(R_1=E^A_4e_1\), the even-row predecessors are \((4)\) and \((2,2)\).
The second predecessor gives the last two diagrams shown in Figure~\ref{fig:pieri}.
\end{example}

\begin{figure}[htbp]
\centering
\begin{tikzpicture}[x=0.42cm,y=-0.42cm]
\foreach \x in {0,1,2,3}{\draw (\x,0) rectangle +(1,1);}
\node at (2,2) {\(\lambda=(4)\)};
\begin{scope}[xshift=3.2cm]
\foreach \y in {0,1}{\foreach \x in {0,1}{\draw (\x,\y) rectangle +(1,1);}}
\node at (1,3) {\(\lambda=(2,2)\)};
\end{scope}
\begin{scope}[xshift=6.5cm]
\foreach \y in {0,1}{\foreach \x in {0,1}{\draw (\x,\y) rectangle +(1,1);}}
\filldraw[fill=blue!18] (2,0) rectangle +(1,1);
\node at (1.5,3) {\(\nu=(3,2)\)};
\end{scope}
\begin{scope}[xshift=9.6cm]
\foreach \y in {0,1}{\foreach \x in {0,1}{\draw (\x,\y) rectangle +(1,1);}}
\filldraw[fill=blue!18] (0,2) rectangle +(1,1);
\node at (1,4) {\(\nu=(2,2,1)\)};
\end{scope}
\end{tikzpicture}
\caption{Even-row predecessors and two outputs of the vertical Pieri rule. Shaded cells are added cells.}
\label{fig:pieri}
\end{figure}

\begin{example}[The extra \(S_4\) family]
The two characters outside the row/column families may be written
\[
 \Ind_{S_2\times S_2}^{S_4}\one
   =\chi^{(4)}+\chi^{(3,1)}+\chi^{(2,2)},\qquad
 \Ind_{S_3}^{S_4}\sgn
   =\chi^{(2,1,1)}+\chi^{(1^4)}.
\]
Their degrees are \(6\) and \(4\).
The three standard tableaux of shape \((3,1)\) illustrate the degree-three constituent:
\[
 \begin{array}{|c|c|c|}\hline1&2&3\\\hline4\\\cline{1-1}\end{array}
 \qquad
 \begin{array}{|c|c|c|}\hline1&2&4\\\hline3\\\cline{1-1}\end{array}
 \qquad
 \begin{array}{|c|c|c|}\hline1&3&4\\\hline2\\\cline{1-1}\end{array}.
\]
\end{example}
\section{Type B Coxeter group}
\label{sec:B}

Retain \(\ee,\dd,\om,C_k,\Theta_k\) from Section~\ref{sec:prelim}, and define
\[
 U_n=\Ind_{B_2\times S_{n-2}}^{B_n}(\one\boxtimes\sgn).
\]
The index-two formula in the first factor gives
\begin{equation}\Theta_1=U_n+\ee U_n.\label{eq:Bsplit}\end{equation}
For an unordered family \(\mathcal M\), write
\(\om\mathcal M=\{\om\chi:\chi\in\mathcal M\}\).

\begin{theorem}\label{thm:B}
For every \(n\geq4\), the four elements of \(\QQ(B_n)\) are
\[
 \mathcal B_n=\{\Theta_k:0\leq k\leq\lfloor n/2\rfloor\},
 \qquad
 \mathcal B_n'=(\mathcal B_n\setminus\{\Theta_1\})\cup\{U_n,\ee U_n\},
 \qquad \om\mathcal B_n,\quad\om\mathcal B_n'.
\]
\end{theorem}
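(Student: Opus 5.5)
Existence comes first and is easy; completeness is the real work. By \eqref{eq:ThetaSupport}, the supports of \(\Theta_k\), \(0\leq k\leq\lfloor n/2\rfloor\), are the sets \(\{(\alpha,\beta):o_r(\alpha)+o_r(\beta)=n-2k\}\). These partition all bipartitions, because \(o_r(\alpha)+o_r(\beta)\equiv n\pmod2\) and \(o_r(\alpha)+o_r(\beta)\leq n\). Each source \(C_k\times S_q\) lies in the parabolic \(B_{2k}\times S_q\) of corank at most one. It is QP by Proposition~\ref{prop:Lu}(iii) (special form) together with parabolic induction in Proposition~\ref{prop:QPfacts}(ii). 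The inducing character \(\one\boxtimes\sgn\) extends to that parabolic. For \(\mathcal B_n'\), the split \eqref{eq:Bsplit} shows that \(U_n\) and \(\ee U_n\) are genuine characters summing to \(\Theta_1\). The term \(U_n\) has source \(B_2\times S_{n-2}\), which is standard parabolic. The term \(\ee U_n\) is induced from \(\ee\boxtimes\sgn\) on the same parabolic, and \(\ee\) is restricted from \(B_2\). So \(\mathcal B_n'\) is also a model. Tensoring every term with \(\om=\sgn_{B_n}\), which restricts to a linear character of every \(W_J\), gives \(\om\mathcal B_n\) and \(\om\mathcal B_n'\). These four families are pairwise inequivalent. \(U_n\) is not any \(\Theta_k\) or \(\om\Theta_k\). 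Also \(\om\Theta_k\) has support \(\{o_c(\alpha)+o_c(\beta)=n-2k\}\) after swapping \(\alpha,\beta\), and this differs from every row support once \(n\geq4\). One witness is \(((2,1,1),\emptypar)\), compared across the row and column statistics.

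For completeness I would follow the template of Section~\ref{sec:A}. First, use Proposition~\ref{prop:corank} to restrict to \(J=S\) or \(J\) maximal, so that \(W_J\in\{B_n,\ B_d\times S_{n-d},\ S_n\}\). Then run through Lu's list, Proposition~\ref{prop:Lu}(iii)--(v), for the possible QP sources. The projection \(p\) and the unsigned orbit structure reduce the image in \(S_n\) to the type A candidates already classified. Lemma~\ref{lem:Acolumns} then forces each unsigned image to be, up to the Pieri-type exclusions, a pairing group times a symmetric tail. The signed lifts are A, B, D, or special form on the initial segment, with the rank-four exceptional lift handled separately. For each surviving lift, and each choice of ambient linear character (products of \(\ee,\dd\) on each factor), I would compute the induced character using the bipartition Littlewood--Richardson rule. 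I would then discard the non-MF ones by exhibiting a repeated bipartition, in the way \((n-2,2)\) was used in type A. The outcome should be that every MF allowed character for large \(n\) falls into one of four groups:
\begin{itemize}
\item a layer \(\Theta_k\) or \(\om\Theta_k\);
\item a piece \(U_n,\ee U_n\) or its \(\om\)-twist;
\item an index-two combination;
\item a thin character, meaning one whose support avoids a fixed small bipartition shape.
\end{itemize}

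Next comes the exact-cover step, carried out as in Lemma~\ref{lem:Aconnected}. Form a bipartite graph on the row layers \(\Theta_k\) and column layers \(\om\Theta_j\). Join two layers when some non-thin bipartition lies in both supports. I would show this graph is connected using explicit bipartitions such as \((\lambda_p,\emptypar)\) from the type A proof. Connectivity gives a uniform orientation, so without loss of generality the family is the row family. The remaining thin bipartitions, near the hooks and near one-row shapes, can only be covered by \(\Theta_0,\Theta_1\) or by the split \(U_n,\ee U_n\). This produces exactly \(\mathcal B_n\) or \(\mathcal B_n'\). Finally, I would check that \(U_n\) cannot be split further, and that no other QP source reproduces the same supports with a different decomposition.

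The main obstacle is the candidate reduction in the signed setting. Lu's double-cover couplings and the D forms \(N_d^+\rtimes L\) and \((N_d\rtimes L)^\circ\) produce many index-two variants, and each needs an explicit repeated constituent or a thinness argument. I would attack it in order:
\begin{enumerate}
\item Apply the index-two formula, reducing each variant to a sum of two characters induced from the full-kernel forms.
\item Reuse the type A obstructions on the \(\alpha\)-coordinate via inflation.
\end{enumerate}
Small ranks, roughly \(4\leq n\leq 11\), cannot be handled this way. For these I would use the finite reduction of Proposition~\ref{prop:finite} and the archived computation \cite{ZhangData}, exactly as in the low-rank part of Section~\ref{sec:A}.
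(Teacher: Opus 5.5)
Your existence argument is essentially the paper's, but the completeness plan has a genuine gap. The predicted output of your candidate reduction is wrong, and nothing in your exact-cover step handles what it misses.

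Type~B has allowed QP MF characters that are proper, non-thin pieces of a single layer. Take the full B-form lift \(H=(N_{2k}\rtimes\pair_k)\times S_q\leq B_{2k}\times S_q\), which is QP by Proposition~\ref{prop:Lu}(iii), with inducing character \(\one\boxtimes\sgn\). It induces to
\[
 \Ind_H^{B_n}(\one\boxtimes\sgn)
 =\sum_{i=0}^{q}\ \sum_{\substack{|\alpha|=2k+i\\ o_r(\alpha)=i}}\chi^{(\alpha,(1^{q-i}))}.
\]
This character has the following properties:
\begin{itemize}
\item it is MF and lies inside \(\Supp\Theta_k\);
\item its degree is \(2^qa_q\), against \(\Theta_k(1)=2^{q+k}a_q\);
\item it has \(\pi_R=0\);
\item it contains every \((\alpha,\emptypar)\) with \(o_r(\alpha)=q\), including bipartitions \((\lambda_p,\emptypar)\) of the kind you planned to use as connectivity witnesses.
\end{itemize}
The D form \(N_{2k}^+\rtimes\pair_k\) gives this character plus its \(\ee\)-twist. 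At \(k=1\) these two characters are exactly \(U_n\) and \(\Theta_1=U_n+\ee U_n\), so this is the phenomenon behind \(\mathcal B_n'\).

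For \(k\geq2\) these pieces cause three problems for your plan. First, they fit none of your four groups. Second, they invalidate the edge equations \(x_q+y_p=1\), because a witness bipartition may be covered by such a piece rather than by a whole layer. Third, your thin completion presupposes that the non-thin bipartitions are already covered by whole layers. What has to be shown is that no model can contain such a piece, whatever other allowed terms might cover the rest of \(\Supp\Theta_k\). Your closing remark about ``other decompositions'' defers exactly this and gives no method for it.

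The paper's missing ingredient is a counting argument through the projections \(\pi_L,\pi_R\). Lemma~\ref{lem:Bprojection}, with Lemma~\ref{lem:graphextend} handling descent of characters involving \(\ee\), shows two things. Every allowed term has a nonzero projection, and each nonzero projection is an allowed \(S_n\)-term. So by Theorem~\ref{thm:A} the two projections of a model are complete row or column families. Then \eqref{eq:Bbudget} bounds each term by \(\frac{d_q}{2}\) per nonzero projection. These bounds sum over any model to exactly \(\Gamma_{B_n}(1)\), so equality is forced term by term. Table~\ref{tab:Bcore} and Lemma~\ref{lem:Bfixed} show equality occurs only for the \(C_k\) products and the \(k=1\) split. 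The B-form piece fails immediately, since \(2^qa_q<2^{q+k-1}a_q\) for \(k\geq2\).

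This route also imports type~A for all \(n\geq5\), so only \(n=4,5,6\) need machine computation. Your plan to settle \(4\leq n\leq11\) by Proposition~\ref{prop:finite} would require subgroup enumeration in groups up to \(B_{11}\), which the paper does not record.

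Finally, your distinctness witness fails. Both \((2,1,1)\) and \((3,1)\) have two odd rows, so \(((2,1,1),\emptypar)\) lies in both \(\Theta_1\) and \(\om\Theta_1\).
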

Their existence and perfect realizations are given in \cite[Theorem 4.5]{MZperfect}.
We prove that arbitrary QP stabilizers yield no additional families.

\begin{corollary}[Gelfand triples in type B]
\label{cor:BGelfand}
Let \(n=2k+q\), put \(H_{k,q}=C_k\times S_q\), and let
\(\lambda_{k,q}=\one\boxtimes\sgn_{S_q}\).
Then \((B_n,H_{k,q},\lambda_{k,q})\) is a linear Gelfand triple.
If
\[
 b_{n,q}=\#\{(\alpha,\beta):|\alpha|+|\beta|=n,
          \ o_r(\alpha)+o_r(\beta)=q\},
\]
then
\[
 \mathscr A(B_n,H_{k,q},\lambda_{k,q})\cong\C^{b_{n,q}}.
\]
In particular, \((B_{2k},C_k)\) is an ordinary Gelfand pair and
\[
 \mathscr A(B_{2k},C_k,\one)
 \cong\C^{\sum_{i=0}^k p(i)p(k-i)}.
\]
The two characters \(U_n\) and \(\ee U_n\) in the split family also
define linear Gelfand triples.
\end{corollary}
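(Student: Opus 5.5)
The corollary follows the pattern of Corollary~\ref{cor:AGelfand}, with the type \(B\) character formulas of Subsection~\ref{subsec:charformulas} in place of the type \(A\) Pieri formula.

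\textbf{The triple \((B_n,H_{k,q},\lambda_{k,q})\).} By definition \eqref{eq:ThetaPhi}, \(\Ind_{H_{k,q}}^{B_n}\lambda_{k,q}=\Theta_k\). Formula \eqref{eq:ThetaSupport} writes \(\Theta_k\) as a sum of distinct irreducibles \(\chi^{(\alpha,\beta)}\) with coefficient one, indexed by the bipartitions satisfying \(o_r(\alpha)+o_r(\beta)=q\). So \(\Theta_k\) is MF with exactly \(b_{n,q}\) constituents. Proposition~\ref{prop:Gelfandcriteria}(i) then gives that \(\mathscr A(B_n,H_{k,q},\lambda_{k,q})\cong\prod\operatorname{Mat}_1(\C)=\C^{b_{n,q}}\), and that the triple is a linear Gelfand triple.

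\textbf{The pair \((B_{2k},C_k)\).} This is the case \(q=0\), where \(\lambda\) is trivial and \(H=C_k\). Here \(b_{2k,0}\) counts bipartitions \((\alpha,\beta)\) of total size \(2k\) with both \(\alpha\) and \(\beta\) even-row. The map \(\mu\mapsto 2\mu\) (doubling every part) is a bijection from partitions of \(i\) to even-row partitions of \(2i\). In particular, an even-row partition has even size, so \(|\alpha|=2i\) and \(|\beta|=2(k-i)\) for some \(0\le i\le k\). Summing over \(i\) gives \(b_{2k,0}=\sum_{i=0}^k p(i)p(k-i)\), as claimed.

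\textbf{The split characters \(U_n\) and \(\ee U_n\).} Equation \eqref{eq:Bsplit} gives \(\Theta_1=U_n+\ee U_n\), and both summands are genuine characters. Since \(\Theta_1\) is MF by the case \(k=1\) above, each summand is MF. Each summand is also induced from a linear character: \(U_n\) from \(\one\boxtimes\sgn\) on \(B_2\times S_{n-2}\). For \(\ee U_n\), the identity \(\ee\,\Ind_H^G\tau=\Ind_H^G(\ee|_H\,\tau)\) shows it is induced from the linear character \(\ee|_{B_2}\boxtimes\sgn\) on the same subgroup. Proposition~\ref{prop:Gelfandcriteria}(i) therefore applies to both, and each defines a linear Gelfand triple.

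The only step that needs care is the reliance on \eqref{eq:ThetaSupport}, which is quoted from \cite{MZperfect} together with its coefficient-one claim. If an independent check is wanted, induce in stages through \(B_{2k}\times B_q\). Since \(\ch\Ind_{\pair_k}^{S_{2k}}\one=E^A_{2k}\), the character \(\Ind_{C_k}^{B_{2k}}\one\) corresponds to \(E^A(x)E^A(y)\) in the two Schur-function variables. Inducing the sign of the unsigned \(S_q\) factor multiplies by \(\sum_{a+b=q}e_a(x)e_b(y)\). The vertical Pieri rule \eqref{eq:Apieri} then yields each \((\alpha,\beta)\) with \(o_r(\alpha)+o_r(\beta)=q\) exactly once, because each such shape has a unique row-even remainder.
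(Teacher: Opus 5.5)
Your proof is correct and follows the paper's own argument: \eqref{eq:ThetaSupport} with Proposition~\ref{prop:Gelfandcriteria}(i) gives the triple and the count \(b_{n,q}\), halving the rows of even-row partitions gives the \(q=0\) rank, and \eqref{eq:Bsplit} handles \(U_n\) and \(\ee U_n\), where your explicit inducing character \(\ee|_{B_2}\boxtimes\sgn\) is a detail the paper leaves implicit. In your optional cross-check, identifying \(\Ind_{C_k}^{B_{2k}}\one\) with \(\sum_i E^A_{2i}(x)E^A_{2k-2i}(y)\) needs more than the type~\(A\) pairing formula: it uses \(C_k=C_1\wr S_k\) and \(\Ind_{C_1}^{B_2}\one=\one+\ee\), which give the plethysm \(h_k[h_2(x)+h_2(y)]\).
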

\begin{proof}
Formula~\eqref{eq:ThetaSupport} identifies the first induced character
with the MF character \(\Theta_k\) and counts its constituents.
At \(q=0\), the bipartitions in its support have two even-row
components; division of every row by two gives an ordered pair of
partitions of total size \(k\), proving the rank formula.
Finally, \eqref{eq:Bsplit} writes the MF character \(\Theta_1\) as the
sum of the two genuine characters \(U_n\) and \(\ee U_n\).
They are individually MF, and Proposition~\ref{prop:Gelfandcriteria}
applies.
\end{proof}

\begin{remark}[Comparison with the literature]
The constituent formulas and MF property of these triples occur in
the perfect-model construction \cite{MZgraphs,MZperfect}; the
corollary records their finite Gelfand-triple algebras explicitly.
The classification of commutative parabolic Hecke algebras in
\cite{APVM} applies when the stabilizer is standard parabolic.
It does not classify the signed pairing subgroups \(C_k\), so it cannot
replace the QP core reduction below.
\end{remark}

\subsection{Two symmetric-group projections}

For a \(B_n\)-character \(\chi\), define
\[
 \pi_L(\chi)=\sum_{\lambda\vdash n}
       \langle\chi,\chi^{(\lambda,\emptypar)}\rangle\chi^\lambda,
 \qquad \pi_R(\chi)=\pi_L(\ee\chi).
\]
We first ensure that these projections retain the required restriction on inducing characters.

\begin{lemma}[Extension from a QP graph]\label{lem:graphextend}
Let \(L\leq S_n\) and \(\varphi:L\to C_2\).
If its graph is QP in \(S_n\times A_1\), then \(\varphi\) extends to a linear character of a standard Young subgroup containing \(L\).
\end{lemma}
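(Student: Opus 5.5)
We view \(S_n\times A_1\) as a Coxeter group with simple generators \(s_1,\ldots,s_{n-1}\) together with the generator \(s_\ast\) of \(A_1=\{1,s_\ast\}\). The graph
\[
 G_\varphi=\{(x,\varphi(x)):x\in L\}
\]
is a subgroup of this product, and its projection to \(S_n\) is an isomorphism onto \(L\). The plan is to apply the orbit description of Proposition~\ref{prop:Lu}(i)--(ii) to \(G_\varphi\), regarded as a QP subgroup of the reducible Coxeter group \(S_n\times S_2\) on the \(n+2\) points \(\{1,\ldots,n\}\sqcup\{n+1,n+2\}\). Lu's classification is stated orbitwise, with couplings only of the listed kinds. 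We may therefore read off how the \(C_2\)-coordinate of the graph is tied to the factors of \(L\).

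Concretely, I would proceed in three steps.

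\emph{Step 1.} By Proposition~\ref{prop:QPfacts}(ii), the image of \(G_\varphi\) under the Coxeter quotient \(S_n\times A_1\to S_n\) is QP. So \(L\) is QP in \(S_n\), has consecutive natural orbits \(O_1,\ldots,O_m\), and is a product of the listed factors up to total-even, diagonal, or fiber-product couplings. Let \(Y=S_{O_1}\times\cdots\times S_{O_m}\) be the standard Young subgroup generated by these orbits; this is the candidate subgroup.

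\emph{Step 2.} In the disjoint-support picture, \(G_\varphi\) has the extra orbit \(\{n+1,n+2\}\), on which it acts through \(\varphi\). When \(\varphi\ne1\), this orbit is coupled to \(L\) nontrivially. By Proposition~\ref{prop:Lu}(i) and (v), the only couplings allowed between orbit factors are total-even subgroups of products of symmetric and pairing groups, simple double covers, diagonal groups, and the two fiber products. The \(S_2\) orbit cannot be a diagonal partner of an orbit of \(L\) unless that orbit also has size two, and it cannot belong to a fiber product. So the coupling is a total-even or simple double-cover construction. In either case \(\varphi\) is a product of Coxeter parities of selected factors: \(\varphi(x)=\prod_{i\in I}\sgn(x|_{O_i})\) for some set \(I\) of orbits. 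For the double-cover case, the statement that \(\varphi\) is a product of factor parities is where Lu's simplicity condition is used: the reflection extensions are represented by simple reflections, and these simple reflections lie in \(Y\).

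\emph{Step 3.} The character \(\prod_{i\in I}\sgn_{S_{O_i}}\) is a linear character of \(Y\), and it restricts to \(\varphi\) on \(L\). This gives the required extension.

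The main obstacle is making Step 2 rigorous. One has to justify that Lu's list, which is formulated for \(S_n\), applies to the reducible group \(S_n\times A_1\), and that every coupling of the \(A_1\) factor genuinely has this parity form. Two cases need separate checks. The first is an orbit of size two, where a diagonal coupling could occur; there the diagonal \(S_2\) is still given by a sign character, so the conclusion holds. The second is an exceptional transitive factor, such as \(A_5\) or \(\GL_3(2)\), which has no nontrivial map to \(C_2\); when \(L\) contains such a factor, \(\varphi\) restricts trivially to it, and the conclusion is immediate. If the reduction to Lu's list proves delicate, I would fall back on QP2 directly. The plan there is to take a reflection \(r=(i,i+1)\) lying in a single orbit, use the minimum-point property from Proposition~\ref{prop:QPfacts}(i) to show that \(\varphi\) is constant on the \(L\)-conjugacy class of reflections inside each orbit factor, and from this derive the product-of-signs formula.
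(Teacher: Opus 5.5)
Your three steps match the paper's proof: it also obtains that \(L\) is QP via the Coxeter quotient, uses Lu's list to see that the two-point orbit can be tied to \(L\) only through a total-even factor (the diagonal \(S_2\) being the total-even subgroup of \(S_2\times S_2\), while fiber-product and exceptional factors have larger orbits), and concludes \(\varphi=(-1)^{\sum_i\epsilon_i}\), a product of signs of full symmetric groups on consecutive orbits. The obstacle you flag has a one-line fix in the paper: \(S_n\times S_2\) is the standard parabolic subgroup of \(S_{n+2}\) omitting \((n,n+1)\), and parabolic induction preserves QP by Proposition~\ref{prop:QPfacts}(ii), so Lu's type~A list applies to the graph directly. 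Separately, your side remark that exceptional factors admit no nontrivial map to \(C_2\) is false for the order-\(10\) dihedral factor, but you do not need it, since Lu's list already forbids exceptional factors from coupling to the two-point orbit.
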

\begin{proof}
If \(\varphi=1\), the trivial character of \(S_n\) extends it.
Otherwise the projection makes \(L\) QP.
Embed \(S_n\times S_2\) as a standard parabolic subgroup of \(S_{n+2}\); parabolic induction preserves the QP condition on the same subgroup.
If \(\varphi\ne1\), the final two points form an orbit without an independent \(S_2\) factor.
In Proposition~\ref{prop:Lu}, only a total-even factor can couple to this two-point orbit.
The diagonal \(S_2\) is precisely the total-even subgroup of \(S_2\times S_2\); all other diagonal, fiber-product, and exceptional factors have larger relevant orbits.
Thus \(\varphi\) is the product of permutation signs on a selection of the consecutive symmetric or pairing factors.
More explicitly, write \(\epsilon_i\) for the permutation parity on each selected natural orbit.
The total-even equation on that factor is \(a+\sum_i\epsilon_i=0\) over \(\F_2\), where \(a\) is the parity on the last two points.
Hence \(\varphi=(-1)^{\sum_i\epsilon_i}\).
Each \(\epsilon_i\) is the restriction of the sign of the full symmetric group on that consecutive orbit, proving the extension assertion.
\end{proof}

\begin{lemma}\label{lem:Bprojection}
Every allowed \(B_n\)-term has at least one nonzero projection \(\pi_L,\pi_R\).
Each nonzero projection is an allowed symmetric-group term.
Consequently, for \(n\geq5\), the two projections of a model are each a complete row or column family.
\end{lemma}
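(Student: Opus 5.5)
The plan is to compute the projections by Frobenius reciprocity, using that \(\chi^{(\lambda,\emptypar)}\) is inflated from \(S_n\) through \(p\).

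\textbf{Formula for \(\pi_L\).} For an allowed term \(\chi=\Ind_H^{B_n}(\sigma|_H)\), the multiplicity \(\langle\chi,\chi^{(\lambda,\emptypar)}\rangle\) equals \(\langle\sigma|_H,\chi^\lambda\circ p|_H\rangle_H\). Let \(K=H\cap N_n\). This inner product vanishes unless \(\sigma|_K=\one\). In that case \(\sigma|_H\) factors through \(p(H)\), and the multiplicity is \(\langle\Ind_{p(H)}^{S_n}\bar\sigma,\chi^\lambda\rangle\). Hence
\[
\pi_L(\chi)=\Ind_{p(H)}^{S_n}\bar\sigma\quad\text{if }\sigma|_K=\one,\qquad \pi_L(\chi)=0\ \text{otherwise}.
\]
Likewise \(\pi_R(\chi)=\pi_L(\ee\chi)\) is the same induced character with \(\sigma\) replaced by \(\sigma\ee\).

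\textbf{At least one projection is nonzero.} The restriction \(\sigma|_K\) is a linear character of an elementary abelian \(2\)-group. The task is to show it equals \(\one\) or \(\ee|_K\). I would argue this from the structure of \(\sigma\in\Lin(W_J)\) for \(W_J\) a standard parabolic subgroup of \(B_n\). Such \(W_J\) is \(B_m\times S_{n_1}\times\cdots\), so on \(N_n\cap W_J=N_m\) the character \(\sigma\) is either trivial or \(\ee|_{N_m}\). This is because \(\sigma\) is determined on the sign generators by its value at \(s_0\), and all \(f_i\), \(i\le m\), are conjugate to \(s_0\) inside \(B_m\). Thus \(\sigma|_K\in\{\one,\ee|_K\}\), so \(\pi_L\) or \(\pi_R\) is nonzero.

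\textbf{Nonzero projections are allowed \(S_n\)-terms.} Two facts are needed.
\begin{enumerate}
\item \(p(H)\) is QP in \(S_n\). This follows from Proposition~\ref{prop:QPfacts}(ii), since \(p\) is a Coxeter quotient map sending \(s_0\mapsto1\).
\item \(\bar\sigma\) (or \(\overline{\sigma\ee}\)) extends to a linear character of a standard Young subgroup containing \(p(H)\).
\end{enumerate}
This second fact is the main obstacle. The naive extension fails: \(\sigma\) lives on \(W_J\), but \(p(W_J)\) need not relate to \(\bar\sigma\) on \(p(H)\), because \(\sigma\) may involve \(\ee\) restricted to \(H\), which becomes a new sign on \(p(H)\) not obviously restricted from a Young subgroup.

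Lemma~\ref{lem:graphextend} is designed for exactly this. Write \(\bar\sigma=\tau\cdot\varphi\), where \(\tau\) is restricted from \(p(W_J)\), a Young subgroup, and \(\varphi:p(H)\to C_2\) records the \(\ee\)-contribution. I would realize the graph of \(\varphi\) as a Coxeter quotient image of \(H\). Use the map \(B_n\to S_n\times A_1\), \(w\mapsto(p(w),\ee(w))\), which sends \(s_0\mapsto(1,s)\) and \(s_i\mapsto(s_i,1)\). This is a Coxeter homomorphism, so its image of \(H\) is QP by Proposition~\ref{prop:QPfacts}(ii). That image is the graph of \(\varphi\) when \(K\subseteq\Ker\ee\), which is the case forced in the nonzero branch. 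Lemma~\ref{lem:graphextend} then extends \(\varphi\), and the product of the two extensions extends \(\bar\sigma\). Some care is needed in that \(p(H)\) lies in a Young subgroup containing both supports; I would take the join of the consecutive orbit supports.

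\textbf{Consequence for models.} Sum \(\pi_L\) over a model. Since every \(\chi^{(\lambda,\emptypar)}\) occurs exactly once in \(\Gamma_{B_n}\), the nonzero \(\pi_L\)-terms form an exact cover of \(\Irr(S_n)\) by allowed \(S_n\)-terms, that is, a quasiparabolic Gelfand model of \(S_n\). The same holds for \(\pi_R\), using \(\ee\chi^{(\lambda,\emptypar)}=\chi^{(\emptypar,\lambda)}\). By Theorem~\ref{thm:A}, for \(n\ge5\) each is the full row family or the full column family.
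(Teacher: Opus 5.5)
Your proposal is correct and follows the paper's proof essentially step for step: the Frobenius-reciprocity formula for \(\pi_L,\pi_R\); the observation that an ambient linear character restricts to \(\one\) or \(\ee\) on the pure sign kernel; the Coxeter quotient \(B_n\to S_n\times A_1\) feeding into Lemma~\ref{lem:graphextend}; and Theorem~\ref{thm:A} for the statement about models. One small fix concerns combining the extensions: restrict to the intersection \(p(W_J)\cap Y\), where \(Y\) is the Young subgroup supplied by Lemma~\ref{lem:graphextend}. This intersection is again a standard Young subgroup containing \(p(H)\), and both characters restrict to it. Do not pass to a join, because the permutation-sign part of \(\bar\sigma\) need not extend to a larger Young subgroup.
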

\begin{proof}
Write the term as \(\Ind_H^{B_n}\tau\), with \(\tau=\sigma|_H\), and let \(K=H\cap N_n\).
Frobenius reciprocity gives
\[
 \pi_L(\Ind_H^{B_n}\tau)=
 \begin{cases}
 \Ind_{pH}^{S_n}\bar\tau,&\tau|_K=1,\\
 0,&\tau|_K\ne1,
 \end{cases}
\]
where \(\bar\tau(p(h))=\tau(h)\).
For \(\pi_R\), replace \(\tau\) by \(\tau\ee\).
On the sign kernel of any standard parabolic subgroup, an ambient linear character restricts as either \(1\) or \(\ee\).
Hence at least one projection is nonzero.

An inducing character with no \(\ee\) factor plainly descends to the ambient Young subgroup.
If \(\ee|_K=1\), apply the Coxeter quotient
\[
 B_n\longrightarrow S_n\times A_1,\qquad w\longmapsto(p(w),\ee(w)).
\]
The image of \(H\) is a QP graph.
Lemma~\ref{lem:graphextend} extends its graph character, and multiplication by the remaining permutation signs proves that the descended character is allowed.
Now apply Theorem~\ref{thm:A} to both projections.
\end{proof}

By Lemma~\ref{lem:Acolumns}, a term with projection \(R_q^A\) or \(C_q^A\) has
\begin{equation}
 pH=\pair_k\times S_q,\qquad n=2k+q .
 \label{eq:Bimage}
\end{equation}
In the ranks used in the general proof, the two nonzero projections have the same \(q\).
Indeed the two orbit actions in \eqref{eq:Bimage} determine \(k,q\); the degeneracy \(\pair_1=S_2\) does not give a second pairing/symmetric decomposition in these ranks.

Put
\[
 a_q=\frac{n!}{2^kk!q!},\qquad
 d_q=\Theta_k(1)=2^{q+k}a_q .
\]
For a term with image \eqref{eq:Bimage},
\(\chi(1)=2^na_q/|H\cap N_n|\).
The key estimate is
\begin{equation}
 \chi(1)\leq\frac{d_q}{2}\,
       \#\{\text{nonzero projections of }\chi\}.
 \label{eq:Bbudget}
\end{equation}
After summing over a model, the right side is exactly \(\sum_qd_q=\Gamma_{B_n}(1)\), because each projection contains each layer once.
Thus every estimate in \eqref{eq:Bbudget} must be an equality.

\subsection{Pairing cores and nonempty tails}

Assume \(k\geq2,q\geq2\).
A full-group source has an extreme projection \(E^A_{2k}h_q\) or its transpose, and is not MF.
An unsigned source has parabolic corank at least two.
The remaining maximal parabolic sources are \(B_{2k}\times S_q\) and \(B_q\times S_{2k}\).

In the reverse source, enlarge the subgroup to \(L\times\pair_k\), where \(L\) is its signed-factor projection.
The ambient linear character extends to this overgroup.
But \(\Ind_{\pair_k}^{B_{2k}}\one\) contains \(\chi^{((2k-2),(2))}\) twice, through the \(S_{2k}\)-predecessors \((2k)\) and \((2k-2,2)\).
Its linear twists also have repeated constituents.
Induction of a positive multiple preserves repetition, so the reverse source is excluded.

In the forward source let \(L\) be the projection to \(B_{2k}\).
Goursat's lemma describes a subgroup of a direct product with surjective projections as a fiber product over a common quotient.
Because the entire unsigned image is \(\pair_k\times S_q\), that quotient is a quotient of \(L\cap N_{2k}\) and also of \(S_q\).
It is therefore trivial or \(C_2\).
Besides \(L\times S_q\), one must retain
\begin{equation}
 H=\Ker(\varphi\boxtimes\sgn:L\times S_q\to C_2),
 \qquad \varphi|_{L\cap N_{2k}}\ne1 .
 \label{eq:Bcouple}
\end{equation}

Proposition~\ref{prop:Lu} gives the core list and the following bounds.
The two D forms are kept separately as subgroup structures, even when their kernel orders agree.
\begin{table}[htbp]
\centering\small
\setlength{\tabcolsep}{4pt}
\begin{tabular}{llll}
\toprule
Core or source&Sign-kernel order of \(H\)&Degree bound&Equality in \eqref{eq:Bbudget}\\
\midrule
Unsigned pairing&\(1\)&Not MF&Never\\
Full B form, product&\(2^{2k}\)&\(2^qa_q\), one projection&Never\\
Full B form, \eqref{eq:Bcouple}&\(2^{2k-1}\)&\(2^{q+1}a_q\), two&Never\\
Either D form, product&\(2^{2k-1}\)&\(2^{q+1}a_q\), two&Never\\
Special \(C_k\), product&\(2^k\)&\(d_q\), two&Ordinary layer\\
Exceptional rank-four lift&\(2\)&Projection not a full layer&Never\\
\bottomrule
\end{tabular}
\caption{The pairing-core degree estimate for \(k\geq2,q\geq2\).}
\label{tab:Bcore}
\end{table}

Here is the exclusion of additional couplings in the table.
For a Coxeter-even core, there are no reflections.
A QP graph over an \(A_1\) quotient with nontrivial second projection would require a generating rotation consisting of a core reflection and that \(A_1\) reflection.
This is impossible.
It also excludes the even exceptional rank-four core.
Its uncoupled character either has a repeated same-oriented Pieri projection, or has a second projection which is not a complete row/column layer.

For an even-sign D form with \(k\geq3\), there is no nontrivial \(\pair_k\)-invariant linear character of its sign kernel.
Invariance under within-pair swaps first kills every pair flip.
On the remaining even vector space, invariance under pair permutations kills a vector supported on two pairs: for distinct \(i,j,l\),
\((e_i+e_j)+(e_j+e_l)=e_i+e_l\).
Invariance makes all three values equal, so that value is zero in \(\F_2\).
For \(k=2\), a nontrivial invariant character must be negative on cross-pair flips and positive on pair flips.
If it is positive on every core reflection, its graph is not generated by rotations and simple reflections.
If it is negative on core reflections, its kernel is odd but has no reflection, contradicting Proposition~\ref{prop:QPfacts}(iii).

For \(C_k\), \eqref{eq:Bcouple} would make \(\Ker\varphi\) QP in \(B_{2k}\), with image \(\pair_k\) and kernel order \(2^{k-1}\).
Lu's pairing-image list excludes this even-special B form.
The rank-four exceptional lift is also excluded here because it has elements with \(\ee=-1\), whereas \(\Ker\varphi\leq C_k\leq\Ker\ee\).
This proves all the entries and the strictness assertions in Table~\ref{tab:Bcore}.
The equality case is the ordinary layer, with its two projections in the same orientation.

\subsection{The fixed coordinate in odd rank}

The full-group case \(q=1\) requires a separate estimate.
We give the relevant kernel argument.
Put \(\pair_k^+=\pair_k\cap A_{2k}\) and \(C_k^+=C_k^\circ\).

\begin{lemma}\label{lem:Bsmallkernel}
Suppose \(k\geq3\), \(M\leq B_{2k}\) is even and QP, \(pM\lhd\pair_k\), and \(\pair_k/pM\) is elementary abelian of order at most four.
If \(|M\cap N_{2k}|<2^{2k-1}\), then
\[
 pM=\pair_k^+,\qquad M=\pair_k^+\ \text{or}\ C_k^+
\]
in a permitted unsigned or consecutive signed pairing embedding.
\end{lemma}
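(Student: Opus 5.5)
The plan is to determine the unsigned image \(pM\) first, and then the sign kernel \(K=M\cap N_{2k}\) together with the way \(M\) is glued over \(pM\).

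\emph{Step 1: the possible images.} The abelianization of \(\pair_k=C_2\wr S_k\) for \(k\geq2\) is \(C_2\times C_2\). It is detected by two characters: the parity \(\alpha\) of the number of within-pair swaps \(a_i\), which equals \(\sgn_{S_{2k}}|_{\pair_k}\) because each \(b_i\) is even, and the sign \(\beta\) of the induced permutation of the \(k\) pairs. So \(pM\) is one of \(\pair_k\), \(\Ker\alpha=\pair_k^+\), \(\Ker\beta\), \(\Ker\alpha\beta\), or \(\Ker\alpha\cap\Ker\beta\).

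The quotient \(B_{2k}\to S_{2k}\) sends \(s_0\) to the identity, so it is a Coxeter quotient. By Proposition~\ref{prop:QPfacts}(ii), \(pM\) is therefore QP in \(S_{2k}\).

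For \(k\geq3\), each of the five candidates is still transitive on the \(2k\) points, so Proposition~\ref{prop:Lu}(i)--(ii) applies to a single orbit. The transitive QP subgroups are \(S_{2k}\), \(\pair_k\), the total-even subgroups \(\pair_k^+\) and \(A_{2k}\), and the exceptional transitive factors in degrees \(6\) and \(8\). The exceptional factors are excluded by order or by normality: \(|A_5|=60\) is not a divisor of \(48\) compatible with index at most \(4\), and \(|\AGL_3(2)|=1344>384\). Also \(\Ker\beta\) and \(\Ker\alpha\beta\) are not on Lu's list. Hence \(pM\in\{\pair_k,\pair_k^+\}\).

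\emph{Step 2: the kernel and the lift.} Here \(K\) is a \(pM\)-invariant subspace of the permutation module \(\F_2^{2k}\). For image \(\pair_k\), Proposition~\ref{prop:Lu}(iii) lists the QP lifts in \(B_{2k}\) as the A form, the two D forms, the B form, and the special form \(C_k\). The even-special form is excluded there, and the rank-four lift is excluded since \(k\geq3\). The hypothesis \(|K|<2^{2k-1}\) removes the B and D forms, leaving only the A form (\(K=1\)) and \(C_k\) (\(K=\langle c_i\rangle\)).

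Both of these contain the lifts \(a_i\) with \(\ee(a_i)=1\), so \(\om(a_i)=-1\) and neither is even. This rules out \(pM=\pair_k\). The point needing care is that the lift of \(a_i\) in \(M\) might a priori be a twisted element \(a_in\). I would handle this by noting that Lu's forms fix the lift up to the permitted conjugation and the uniform sign on unsigned orbits. A twisted lift \(a_in\) with \(\ee(n)=-1\) would change the kernel order out of the small range or produce the excluded even-special B form.

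\emph{Step 3: the case \(pM=\pair_k^+\).} Consider \(\widetilde M=\langle M,s\rangle\), where \(s\) is a simple reflection realizing an \(a_i\) lift in the consecutive embedding. I would argue, using Proposition~\ref{prop:QPfacts}(iii) in the converse direction together with the double-cover couplings of Proposition~\ref{prop:Lu}(v), that \(\widetilde M\) is an odd QP subgroup with image \(\pair_k\) and the same sign kernel \(K\), and that \(M=\widetilde M^\circ\). Step 2 then forces \(\widetilde M\in\{\pair_k,C_k\}\), and hence \(M=\pair_k^+\) or \(M=C_k^\circ=C_k^+\).

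As a cross-check on \(K\), the \(\pair_k^+\)-invariant subspaces of order below \(2^{2k-1}\) are \(0\), the all-ones line, \(\langle c_i\rangle\), and its even part. The line and the even part would have to appear as kernels of even-special or exceptional forms, which are not QP by Proposition~\ref{prop:Lu}(iii). The remaining cases give \(\pair_k^+\) and \(C_k^+\).

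The main obstacle is Step 3 together with the twisted-lift issue in Step 2. Lu's lists are phrased for the images \(\pi\) and the forms attached to them, not for arbitrary even subgroups with a total-even image. I would first try to reduce to the odd overgroup via the simple double-cover description in Proposition~\ref{prop:Lu}(v). If that fails, I would check QP2 directly on the minimum coset for the finitely many submodule-and-cocycle possibilities.
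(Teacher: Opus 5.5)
Your Steps 1 and 2 are essentially the paper's argument. First, $pM$ contains the index-four commutator subgroup of $\pair_k$, so it is transitive. Lu's list then leaves only $\pair_k$ and $\pair_k^+$. For image $\pair_k$, the kernel bound leaves only the unsigned and special lifts, and both contain the $\om$-odd elements $a_i$.

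The gap is Step 3. To form $\widetilde M=\langle M,s\rangle$ with $\widetilde M^\circ=M$, you need a simple reflection $s$ that normalizes $M$ and has $p(s)\in\pair_k\setminus\pair_k^+$. The only candidates are $s_{2i-1}=a_i$, and nothing in the hypotheses forces any of them to normalize $M$. You can only see that once $M$ is already known to be $\pair_k^+$ or $C_k^+$, so the reduction is circular.

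Here is a concrete case. Let $r_i=a_if_{2i}$, the order-four rotation of the $i$th pair, so $r_i^2=c_i$, and put $M=\langle r_ir_j,c_i,b_i\rangle=(C_4\wr S_k)\cap p^{-1}(\pair_k^+)$.
\begin{itemize}
\item $M$ is Coxeter-even, since $\om(r_i)=\om(a_i)\om(f_{2i})=1$.
\item $pM=\pair_k^+$.
\item $M\cap N_{2k}=\langle c_i\rangle$ has order $2^k<2^{2k-1}$.
\item If a pair permutation $b$ carries pair $i$ to pair $j$, then $a_i b\,a_i=a_ia_jb\notin M$. So no simple reflection of the required kind normalizes $M$, and Step 3 says nothing about it.
\end{itemize}
Such an $M$ has to be excluded by a separate argument. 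For instance, its intersection with the standard $S_4$ on coordinates $1,\ldots,4$ is $\langle(13)(24)\rangle$, which has nonconsecutive orbits.

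The whole group $C_4\wr S_k$ also refutes your remark in Step 2 that a twisted lift $a_in$ with $\ee(n)=-1$ must leave the small-kernel range or be even-special. It is an even lift of $\pair_k$ with kernel $\langle c_i\rangle$, and only the exhaustiveness of Lu's list excludes it.

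Your cross-check list of invariant subspaces is also incomplete. For example, $\{x\in\F_2^{2k}:x_{2i-1}+x_{2i}\text{ independent of }i\}$ is $\pair_k$-invariant of order $2^{k+1}<2^{2k-1}$.

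The missing input is the one the paper uses: apply Lu's lift construction (his Theorem 16) directly to the total-even image $\pair_k^+$, not only to $\pair_k$. For both images it gives the exhaustive kernel alternatives:
\begin{itemize}
\item unsigned, with kernel order $1$;
\item special, with kernel order $2^k$;
\item either D form, with kernel order at least $2^{2k-1}$;
\item B form, with kernel order $2^{2k}$.
\end{itemize}
The even-special form is excluded in type $B$, and the exceptional lift occurs only in rank four. The bound leaves the unsigned and special lifts. These lie in $\Ker\ee$, so their Coxeter sign $\om=\ee\dd$ equals the permutation sign of their image. Evenness therefore forces $pM=\pair_k^+$ and $M=\pair_k^+$ or $C_k^+$. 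No normalizing reflection is needed, and groups like the one above are excluded by the classification itself.
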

\begin{proof}
The commutator subgroup
\([\pair_k,\pair_k]=(C_2^k)^+\rtimes A_k\) is transitive on the \(2k\) points.
Therefore \(pM\) is transitive.
Filtering Lu's symmetric-group list by the normal elementary-abelian quotient leaves only \(\pair_k,\pair_k^+\).
Indeed, the diagonal and fiber-product cases have two orbits.
A symmetric or alternating group on \(2k\) points has too large an order to lie in \(\pair_k\).
The only even-degree transitive exceptional images to check are the degree-six \(A_5\) and degree-eight \(\AGL_3(2)\); their orders have factors five and seven, respectively, absent from \(|\pair_3|\) and \(|\pair_4|\).
A transitive pairing factor on \(2k\) points has order \(|\pair_k|\) or \(|\pair_k|/2\), so containment gives \(\pair_k\) or its permutation-even subgroup.

For these images, the lift construction in Lu's Theorem 16 gives the following exhaustive kernel alternatives before imposing the strict bound:
\[
\begin{array}{c|c}
\text{form}&\text{order of the pure sign kernel}\\ \hline
\text{unsigned}&1\\
\text{special}&2^k\\
\text{either D form}&\text{at least }2^{2k-1}\\
\text{B form}&2^{2k}.
\end{array}
\]
The even-special kernel of order \(2^{k-1}\) is excluded in type B by the same classification.
The exceptional signed lift with pairing image has rank four and is outside \(2k\geq6\).
The unsigned and special lifts have Coxeter parity equal to the permutation parity of their image.
Evenness therefore changes their image to \(\pair_k^+\), giving exactly the two groups in the statement.
\end{proof}

Two concrete QP1 obstructions will be useful.
In \(B_3\), both
\[
 L_f=\langle f_1,(23),f_2f_3\rangle,\qquad
 L_l=\langle(12),f_1f_2,f_3\rangle
\]
fail QP1.
For \(L_f\), the distinct cosets of \(w=(1,-2,3)\) and \((13)w\) both have height \(3\).
For \(L_l\), those of \(w=(-1,3,2)\) and \(f_1f_2(12)w\) both have height \(2\).
Both coset actions have six points.
Successive largest-symbol deletions in Proposition~\ref{prop:QPfacts}(iv) show that
\begin{equation}
 \langle f_1\rangle\times C_k[2,\ldots,2k+1],
 \qquad C_k[1,\ldots,2k]\times\langle f_{2k+1}\rangle
 \label{eq:Blocal}
\end{equation}
are not QP in their indicated embeddings.
Square brackets specify the supporting coordinates.
Likewise
\(\pair_k[1,\ldots,2k]\times\langle f_{2k+1}\rangle\)
reduces to \(\langle f_2\rangle\leq B_2\), an odd subgroup with no simple reflection, and is not QP.

\begin{lemma}[Fixed-coordinate estimate]\label{lem:Bfixed}
Let \(k\geq3\), \(n=2k+1\), and let \(H\leq B_n\) be QP with \(pH=\pair_k\times S_1\).
If a full-group ambient twist of \(\Ind_H^{B_n}\one\) can occur in a model, then either \(H\) is an ordinary initial \(B_{2k}\) core, or
\[
 |H\cap N_n|\geq2^{2k-1}.
\]
\end{lemma}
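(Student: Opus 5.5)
We argue by contradiction. Assume \(|H\cap N_n|<2^{2k-1}\) and that \(H\) is not an ordinary initial \(B_{2k}\) core; we show that no full-group ambient twist of \(\Ind_H^{B_n}\one\) can occur in a model. Since \(pH=\pair_k\times S_1\), the coordinate \(2k+1\) is a separate orbit of \(pH\). We split \(H\) along this orbit.

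First set \(H_1=H\cap B_{2k}[1,\ldots,2k]\). Let \(\rho:H\to\{\pm1\}\) record the sign that \(h\) places on coordinate \(2k+1\). Then \(H=H_1\) when \(\rho\) is trivial, and otherwise \(H_1=\Ker\rho\) has index two.

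When \(\rho\) is nontrivial, there are two cases.
\begin{enumerate}[label=(\alph*)]
\item If \(f_{2k+1}\in H\), then \(H=H_1\times\langle f_{2k+1}\rangle\). Apply the largest-symbol deletion of Proposition~\ref{prop:QPfacts}(iv) to the orbit \(\{\pm(2k+1)\}\). This leaves the QP subgroup \(H_1\leq B_{2k}\), whose pure sign kernel has order at most \(2^{2k-2}\). By Lu's pairing-image list in Proposition~\ref{prop:Lu}(iii), \(H_1\) is an unsigned pairing or the special \(C_k\). Both \(C_k[1,\ldots,2k]\times\langle f_{2k+1}\rangle\) and \(\pair_k\times\langle f_{2k+1}\rangle\) are excluded by \eqref{eq:Blocal} and the remark following it. The remaining possibility is a conjugate embedding that places the signed block later, as in \(\langle f_1\rangle\times C_k[2,\ldots,2k+1]\), and \eqref{eq:Blocal} excludes that as well.
\item If \(f_{2k+1}\notin H\), then \(H\) is a graph coupling \(\{(x,\varphi(x))\}\) with \(\varphi:L\to C_2\) and \(L=H_1\rho^{-1}\)-extension. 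Here we use the Coxeter-quotient argument from the proof of Table~\ref{tab:Bcore}. The kernel \(M=\Ker\varphi\) is QP with \(pM\lhd\pair_k\) and elementary-abelian quotient of order at most four. Whenever \(\varphi\) is not detected by pure signs, \(M\) may be taken even, so Lemma~\ref{lem:Bsmallkernel} applies. It gives \(M=\pair_k^+\) or \(C_k^+\), in a permitted embedding.
\end{enumerate}

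The remaining \(H\) are short explicit lifts: the unsigned \(\pair_k\), \(C_k\), \(\pair_k^+\), or \(C_k^+\), each coupled to the last coordinate through the permutation parity, through \(\eta_k\), or not coupled at all. For each, we compute the full-group twist \(\om^a\ee^b\) of \(\Ind_H^{B_n}\one\) by the index-two formula:
\[
\Ind_{H}^{B_n}\one=\Ind_{L}^{B_n}\one+\Ind_{L}^{B_n}\eta,
\]
with each summand evaluated using \eqref{eq:ThetaSupport} and the Pieri rule \eqref{eq:Apieri}. We then show that each fails as a model term by one of three tests:
\begin{itemize}
\item multiplicity-freeness fails, witnessed by a repeated constituent coming from the two even-row predecessors \((2k)\) and \((2k-2,2)\), exactly as in the reverse-source exclusion;
\item a projection \(\pi_L\) or \(\pi_R\) fails to be a complete layer \(R_1^A\) or \(C_1^A\), contradicting Lemma~\ref{lem:Bprojection};
\item the degree exceeds \((d_1/2)\cdot\#\{\text{nonzero projections}\}\), contradicting the forced equality in \eqref{eq:Bbudget}.
\end{itemize}
For \(q=1\) we have \(d_1=2^{k+1}a_1\). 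A sign kernel smaller than \(2^{k}\) gives degree greater than \(d_1\) with at most two projections, and for the unsigned or index-two cases the first test applies.

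The unique survivor of this analysis is the uncoupled \(C_k\times\langle\text{trivial on }2k+1\rangle\) with \(\rho\) trivial. This is the ordinary initial \(B_{2k}\) core, and it is exactly the exception allowed in the statement.

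I expect the main obstacle to be the coupled case (b). The difficulty is showing that a coupling through \(\eta_k\), which negates the pair flips \(c_i\), cannot yield a QP subgroup. Such a coupling would have \(C_k\)-type image with sign kernel \(2^{k-1}\), effectively an even-special lift. My plan is to reduce it, by the deletion of Proposition~\ref{prop:QPfacts}(iv) at the last coordinate, to the statement that \(E_k\) is not QP in \(B_{2k}\), which Proposition~\ref{prop:Lu}(iii) provides. If the deletion does not land in the standard embedding, I would fall back on the explicit \(B_3\) QP1 witnesses \(L_f,L_l\), restricted to a standard \(B_3\) parabolic containing one pair and coordinate \(2k+1\).
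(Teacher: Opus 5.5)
\textbf{There is a genuine gap.} Your case (a) for $j=2k+1$ is fine. The trouble comes at the exclusion stage. Your three character tests cannot eliminate all of your own remaining candidates, and the step the paper actually uses for them is missing.

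Take $C_k$ coupled to the last coordinate through permutation parity:
\[
 H_\dd=\{(l,f_{2k+1}^{\,e}):l\in C_k,\ (-1)^e=\dd(l)\}=(C_k\times\langle f_{2k+1}\rangle)^\circ .
\]
The last equality holds because $\ee\equiv1$ on $C_k$. This group has the following properties.
\begin{enumerate}
\item Its sign kernel is $\langle c_1,\ldots,c_k\rangle$, of order $2^k<2^{2k-1}$.
\item Its index in $B_n$ is exactly $d_1$.
\item Inducing $\one\boxtimes\one+\dd\boxtimes\ee$ from $C_k\times B_1$ gives
\[
 \Ind_{H_\dd}^{B_n}\one=\sum_{o_r(\alpha)=1,\ o_r(\beta)=0}\chi^{(\alpha,\beta)}
 +\sum_{o_c(\alpha)=0,\ o_c(\beta)=1}\chi^{(\alpha,\beta)} .
\]
Each sum is MF. The two sums are disjoint, since a partition with all columns even has an even number of odd rows.
\item Its projections are $\pi_L=R_1^A$ and $\pi_R=C_1^A$.
\end{enumerate}
So it is MF, both projections are complete layers, and \eqref{eq:Bbudget} holds with equality. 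Every one of your tests passes, contrary to your claim that the uncoupled $C_k$ is the unique survivor. Its mixed orientation is removed only later, by the global $k=0$ argument, and that argument presupposes this lemma.

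The paper excludes this candidate, and the general coupled case, through the QP condition. The group $C_k\times\langle f_{2k+1}\rangle$ contains the simple reflection $s_1$ and has even part $H_\dd$. By Proposition~\ref{prop:QPfacts}(iii), if $H_\dd$ were QP then $C_k\times\langle f_{2k+1}\rangle$ would be QP, contradicting \eqref{eq:Blocal}. You apply \eqref{eq:Blocal} only when $H$ is literally a direct product with $\langle f_j\rangle$, so this parity transfer is the missing idea.

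Your third test is also circular. The inequality \eqref{eq:Bbudget} for full-group $q=1$ terms is deduced from this very lemma, so the forced equality is not yet available.

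The reduction that produces your list of couplings is likewise unproved.
\begin{enumerate}
\item $H_1=\Ker\rho$ need not be Coxeter-even, so Lemma~\ref{lem:Bsmallkernel} does not apply to it. The paper instead passes to $H_0=H^\circ$ and $M=H_0\cap\Ker\tau$.
\item The paper first treats the case $\tau|_{H_0}=1$. An odd QP subgroup contains a simple reflection, which here must be $f_1$. Hence $j=1$, and $pH_0=\pair_k$ contradicts Lemma~\ref{lem:Bsmallkernel}.
\item It then uses that $H_0$ is generated by rotations to find $f_jr\in H_0$. This makes $H_0$ the Coxeter-parity graph over $L=\langle M,r\rangle$.
\item A normalizer argument then forces $L=\pair_k$ or $L=C_k$.
\end{enumerate}
Without these steps, your three coupling types are only asserted. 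For example, $f_1f_3\cdots f_{2k-1}$ normalizes $C_k$, so couplings whose projection strictly contains $C_k$ must also be ruled out. It is the rotation step that does this.

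Finally, the fixed coordinate may be $j=1$. In that case $\langle f_1\rangle\times\pair_k[2,\ldots,2k+1]$ lies in the standard parabolic $B_1\times S_{2k}$ and is QP, so \eqref{eq:Blocal} does not help. The paper excludes it by MF failure, via the repeated constituent $\chi^{((2k-2),(2))}$ in the overgroup $B_1\times\pair_k$. Your case (a) omits this candidate.
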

\begin{proof}
The absolute fixed coordinate \(j\) is \(1\) or \(n\), because the unsigned orbits are consecutive.
Let \(\tau:H\to\{\pm1\}\) record its sign.
If \(\tau=1\), the single-core list applies after deletion.
The only small kernels are unsigned pairing and the initial \(C_k\); the unsigned source is not MF.

Assume \(\tau\ne1\) and put \(H_0=H^\circ\).
If \(\tau|_{H_0}=1\), the odd coset of \(H_0\) contains a simple reflection changing the fixed coordinate.
Thus \(j=1\), this reflection is \(f_1\), and \(pH_0=\pair_k\).
After deletion, Lemma~\ref{lem:Bsmallkernel} forces the large-kernel bound.

Otherwise \(M=H_0\cap\Ker\tau\) is the stabilizer of the largest signed symbol in the \(H_0\)-orbit \(\{j,-j\}\).
Proposition~\ref{prop:QPfacts}(iv), applied to that symbol, identifies \(M\) after deletion as an even QP subgroup of \(B_{2k}\).
It is the intersection of two character kernels in \(H\), so \(pM\lhd\pair_k\) with elementary-abelian quotient of order at most four.
Lemma~\ref{lem:Bsmallkernel} reduces the small-kernel cases to
\(M=\pair_k^+\) or \(C_k^+\).
Because \(H_0\) is generated by rotations, it has a rotation \(f_jr\) changing coordinate \(j\), where \(r\) is a reflection on the remaining coordinates.
Consequently, for \(L=\langle M,r\rangle\),
\[
 [L:M]=2,\qquad
 H_0=\{(l,f_j^{\,\ell(l)\bmod2}):l\in L\}.
\]

If \(M=\pair_k^+\), a signed reflection normalizing \(M\) would have a sign vector fixed by its transitive unsigned image.
A vector of weight one or two cannot be constant on \(2k\geq6\) coordinates.
A cross-pair transposition does not normalize the pairing, as conjugating \(a_i a_l\), with \(l\) a third pair, shows.
Thus \(r\) is an unsigned within-pair swap and \(L=\pair_k\).
If \(M=C_k^+\), normalization of its pair-flip kernel forces \(r\) to preserve the pairing.
A short reflection has a commutator with a pair interchange that is a cross-pair sign flip, outside this kernel.
The remaining positive and negative within-pair reflections lie in the same \(M\)-coset.
Hence \(L=C_k\).

For \(L=C_k\), the group \(L\times\langle f_j\rangle\) has even subgroup \(H_0\) and contains a simple reflection.
It would be QP by Proposition~\ref{prop:QPfacts}, contradicting \eqref{eq:Blocal}.
For \(L=\pair_k\) and \(j=n\), the same argument contradicts the unsigned deletion obstruction.
For \(j=1\), any odd simple extension remains inside \(B_1\times\pair_k\).
Its full-group linear character extends to this overgroup, whose unsigned pairing core has the repeated constituent \(\chi^{((2k-2),(2))}\) already exhibited.
Thus it is not MF.
All small-kernel full-group alternatives are excluded.
\end{proof}

The large-kernel terms in the lemma satisfy
\[
 \chi(1)\leq4a_1<2^ka_1=d_1/2\qquad(k\geq3).
\]
Since at least one projection is nonzero, this proves \eqref{eq:Bbudget}, strictly, for every additional full-group \(q=1\) term.
Proper-parabolic \(q=1\) terms are covered by the same core list as Table~\ref{tab:Bcore}; the only equality case is \(C_k\).

\subsection{Equality and completion}

\begin{proof}[Proof of Theorem~\ref{thm:B}]
For ranks \(4,5,6\), apply Proposition~\ref{prop:finite} to the full group and every maximal standard parabolic subgroup.
The resulting QP MF column counts are \(90,153,219\), respectively, and each column set has exactly four exact covers.
The bipartition supports identify them with the four displayed families.
The exhaustive subgroup inputs, basepoint tests, and character identifications are supplied in \cite{ZhangData}.
Assume \(n\geq7\).
The preceding sections prove the budget for \(k\geq2,q>0\).
For \(q=0\), the pairing-core list gives the same conclusion:
unsigned pairing is not MF; the B and two D forms have respectively one and two projections and strictly smaller degrees; only \(C_k\) attains equality.

For \(k=1\), \(q=n-2\geq5\), the forward source is \(B_2\times S_q\).
The complete rank-two linear-graph list gives either the ordinary double-projection term \(\Theta_1\), the two single-projection terms \(U_n,\ee U_n\), or an equality term with oppositely oriented projections from an index-two B-form graph.
The two nontrivial-kernel graphs of \(C_1\) are not QP; a real linear character of the cyclic order-four core is trivial on its pure sign kernel.
These assertions follow by checking the rank-two coset actions.
In the reverse source, the single \(S_q\)-image list has only the unsigned, B, and two D forms, since \(q\geq5\).
The unsigned case has corank at least two; all others, including their possible binary coupling, have degree at most \(8a_q\), strictly less than their budget.

For \(k=0\), the image is \(S_n\).
The unsigned \(S_n\) term has degree \(2^n=d_n\) and two projections of the same orientation.
Every other lift has degree at most two and strict budget.
Thus equality forces the two entire symmetric-group projections to have the same orientation.
This removes the mixed equality alternative at \(k=1\).

Summing \eqref{eq:Bbudget} now forces every layer with \(k\ne1\) to be the ordinary \(\Theta_k\).
At \(k=1\), one chooses either the whole layer or its split \eqref{eq:Bsplit}.
The common row/column orientation gives the two Coxeter-sign duals.
The four families are distinct by their supports and their numbers of terms.
\end{proof}

\begin{example}[Degrees in ranks four and five]
For \(B_4\), the ordinary layers have degrees
\[
 (\Theta_0(1),\Theta_1(1),\Theta_2(1))=(16,48,12).
\]
The split model has degrees \(16,24,24,12\).
For \(B_5\), the corresponding lists are
\((32,160,120)\) and \((32,80,80,120)\).
These sum to \(76\) and \(312\), respectively.
The split separates two allowed inducing terms without changing the total Gelfand character.
\end{example}

\begin{example}[A repeated constituent detected before induction]
In the unsigned pairing lift to \(B_4\),
\(\chi^{((2),(2))}\) occurs twice in \(\Ind_{\pair_2}^{B_4}\one\).
Its two predecessors in \(S_4\) are \((4)\) and \((2,2)\).
This small example is the same positive-multiplicity obstruction used for every reverse source in the proof.
\end{example}

\subsection{The complete list in type \texorpdfstring{\(B_3\)}{B3}}
\label{subsec:B3classification}

Let \(W=B_3\), with the signed-permutation generators and characters already fixed.
Write \(R\) for the degree-three reflection character, and \(U\) for the inflated degree-two character of \(S_3\).
The ten irreducible characters are
\[
 \one,\dd,\ee,\om,\ U,\ee U,\ R,\ee R,\dd R,\om R ,
 \qquad\dd U=U .
\]
Set \(P=\langle s_1,s_2\rangle\), \(z=-I\), \(K=P\times\langle z\rangle\), and \(W^+=\Ker\om\).
For \(\tau\in\{\one,\dd,\ee,\om\}\), define the allowed characters in Table~\ref{tab:B3terms}.

\begin{table}[htbp]
\centering\small
\begin{tabular}{llr}
\toprule
Term and decomposition&Subgroup for its untwisted term&Degree\\
\midrule
\(A_\tau=\tau(1+U)\)&\(N_3\rtimes\langle(12)\rangle\)&3\\
\(B_\tau=\tau(1+U+R)\)&\(\langle s_0,s_1\rangle\)&6\\
\(E_\tau=\tau(1+U+R+\ee R+\dd R)\)&\(\langle s_0,s_2\rangle\)&12\\
\(F_\tau=\tau(1+\ee R)\)&\(K\)&4\\
\(C=1+\ee+R+\ee R\)&\(P\)&8\\
\(D=1+\ee+U+\ee U+R+\ee R\)&\(C_1=\langle(12),f_1f_2\rangle\)&12\\
\(L_+=1+\om\)&\(W^+\), trivial character&2\\
\(L_-=\dd+\ee\)&\(W^+\), restricted \(\dd\)&2\\
\bottomrule
\end{tabular}
\caption{Characters used in the \(B_3\) models. A subscript \(\tau\) means restriction of that full-group linear twist to the displayed subgroup.}
\label{tab:B3terms}
\end{table}

All displayed subgroups have QP representatives.
For \(K\), the four-point quotient has minimum height sequence \(0,1,2,3\); its standard generators fix the minimum as required in Subsection~\ref{sec:B3canonical}.
The other nonparabolic entries are ordinary signed-pairing or two-point QP sets.
The order-\(16\) subgroup for \(A_\tau\) is the Coxeter-quotient preimage of a reflection subgroup of \(S_3\).
The coefficient formulas in the table can also be checked from the signed coset actions.

\begin{theorem}\label{thm:B3list}
There are twenty-four models in type \(B_3\).
Eight are the following four families and their \(\om\)-twists:
\[
 \begin{gathered}
 \{C,\dd D\},\qquad
 \{C,B_\dd,B_\om\},\\
 \{B_\ee,E_\dd,\one,\om\},\qquad
 \{E_\om,B_1,\dd,\ee\}.
 \end{gathered}
 \]
They have perfect realizations.
The other sixteen are the eight families in Table~\ref{tab:B3extra} and their \(\om\)-twists.
\end{theorem}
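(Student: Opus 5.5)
The plan is to treat \(B_3\) as a finite exact-cover problem via Proposition~\ref{prop:finite}, since the general arguments of Theorem~\ref{thm:B} need \(n\geq4\).
\begin{enumerate}[label=(\arabic*)]
\item \emph{Sources.} By Proposition~\ref{prop:corank}, the ambient parabolic subgroups are \(W=B_3\) itself and the three maximal ones, \(\langle s_0,s_1\rangle\cong B_2\), \(\langle s_0\rangle\times\langle s_2\rangle\cong A_1\times A_1\) and \(P\cong S_3\).
\item \emph{Candidate subgroups.} For each source, enumerate subgroups up to conjugacy, keeping those of index at most \(\Gamma_{B_3}(1)=1+1+1+1+2+2+3+3+3+3=20\). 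Proposition~\ref{prop:Gelfandcriteria}(iv) discards those for which \((W_J,H)\) is not a Gelfand pair.
\item \emph{Columns.} For each survivor and each \(\sigma\in\Lin(W_J)\), compute the multiplicity vector against the ten irreducibles \(\one,\dd,\ee,\om,U,\ee U,R,\ee R,\dd R,\om R\), and keep only the MF vectors.
\item \emph{QP test.} Test each coset action for a basepoint whose minimum-length height satisfies QP1 and QP2. Every rejected subgroup should come with an explicit violating witness, as for \(L_f\) and \(L_l\) above; every accepted column should be realized by a subgroup from Table~\ref{tab:B3terms}, up to linear twist.
\end{enumerate}

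The expected outcome is that the surviving distinct columns are exactly the \(\tau\)-twists of \(A,B,E,F\), the twists of \(C\) and \(D\), \(L_\pm\), and the linear characters. Their decompositions follow from Frobenius reciprocity on the small groups, using the signed-coset description of \(R\), \(U\) and their twists.

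The exact covers are then enumerated by degree bookkeeping. There is a total of \(20\) and a single copy of each irreducible, so \(R,\ee R,\dd R,\om R\) must each be covered exactly once. This strongly constrains which of the degree-\(6\), \(8\) and \(12\) terms appear. I would organize the search by the term covering \(R\):
\begin{itemize}
\item \(B_\tau\) covers \(R\) only when \(\tau\in\{\one,\dd\}\) (since \(\dd R\ne R\)), and \(\dd B_\dd=1+U+\dd R\) does not.
\item \(E_\tau\) covers three of the four reflection twists at once.
\item \(C\) covers \(R\) and \(\ee R\), while \(\dd D\) covers \(\dd R\) and \(\om R\).
\end{itemize}
Within each case the remaining linear and \(U\)-type characters are completed with \(A_\tau\), \(F_\tau\), \(L_\pm\) or single linear terms.

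The \(\om\)-twist is an involution on models, because \(\om\) restricts to an allowed linear character on every source. The search should therefore produce twelve \(\om\)-orbits of size two, giving \(24\) models. I would double-check that no model is \(\om\)-fixed by comparing supports. Finally, I would identify the eight families in the statement with perfect realizations via \cite{MZperfect}, and list the other eight orbit representatives as Table~\ref{tab:B3extra}. For these, the terms involve \(K\), \(A_\tau\) or \(L_\pm\), whose stabilizers are not perfect-involution centralizers.

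The main obstacle is the completeness of step (4) together with the cover count. Exhaustiveness of the QP test over all basepoints rests on Proposition~\ref{prop:QPfacts}(i). The risk is mislabeling conjugate subgroups whose minimum-length heights differ, such as the several embeddings of \(C_1\) and of the order-\(16\) group. I would verify the count by an independent reader as in \cite{ZhangData}, and cross-check a few covers by hand using the degree sums, e.g. \(8+12=20\) for \(\{C,\dd D\}\) and \(8+6+6=20\) for \(\{C,B_\dd,B_\om\}\).
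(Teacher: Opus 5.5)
Your plan follows the paper's proof: Proposition~\ref{prop:finite} applied to \(B_3\) and its three maximal standard parabolic subgroups, an exact-cover count over the resulting QP MF columns, and identification of the eight perfect families through \cite{MZperfect}. Two of your predictions are wrong, though running the computation would correct them.

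First, the search returns forty distinct columns, not only the twenty-six characters of Table~\ref{tab:B3terms}. For example, \(\Ind_{P^\circ}^{W}\one=C+\dd C\) is QP and MF but lies in no model. Every full-group term contains a linear constituent and parabolic terms have degree at least six, so \(U+\ee U\) cannot be completed. The covers must therefore be enumerated over all forty columns.

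Second, your hand case split by the term covering \(R\) is incomplete. Among the \(B_\tau\), only \(B_\one\) contains \(R\), since \(B_\dd=\dd+U+\dd R\). Also \(D\) and \(F_\ee=\ee+R\) cover \(R\), and \(F_\ee\) does so in \(\mathscr N_6\) and \(\mathscr N_7\).
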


\begin{table}[htbp]
\centering
\begin{tabular}{clc}
\toprule
Family&Individual characters&Degrees\\
\midrule
\(\mathscr N_1\)&\(F_1,F_\ee,B_\dd,B_\om\)&\(4,4,6,6\)\\
\(\mathscr N_2\)&\(F_1,F_\ee,\dd D\)&\(4,4,12\)\\
\(\mathscr N_3\)&\(F_1,E_\dd,A_\ee,\om\)&\(4,12,3,1\)\\
\(\mathscr N_4\)&\(F_1,E_\dd,A_\om,\ee\)&\(4,12,3,1\)\\
\(\mathscr N_5\)&\(B_\ee,E_\dd,L_+\)&\(6,12,2\)\\
\(\mathscr N_6\)&\(E_\om,F_\ee,A_1,\dd\)&\(12,4,3,1\)\\
\(\mathscr N_7\)&\(E_\om,F_\ee,A_\dd,\one\)&\(12,4,3,1\)\\
\(\mathscr N_8\)&\(E_\om,B_1,L_-\)&\(12,6,2\)\\
\bottomrule
\end{tabular}
\caption{Representatives of the sixteen additional \(B_3\) models. Each row and its Coxeter-sign twist are distinct.}
\label{tab:B3extra}
\end{table}

\begin{proof}
The character identities in Table~\ref{tab:B3terms} verify that every displayed family contains each of the ten irreducible characters once.
All have degree \(20\), and their individual supports distinguish them.
For completeness, Proposition~\ref{prop:finite} applied to the full group and its three maximal standard parabolic subgroups gives forty distinct QP MF columns.
The exact-cover equations on the ten irreducible rows have precisely twenty-four solutions; their individual supports are the displayed families.
The archive \cite{ZhangData} retains all forty columns, including those occurring in no model, together with the exhaustive subgroup lists and the QP basepoint tests.
Thus this calculation proves completeness of the family list, in addition to verifying the displayed character identities.
The perfect-model classification in \cite{MZperfect} identifies exactly the first eight families as perfect.
Thus the other sixteen are precisely the additional families in Table~\ref{tab:B3extra}.
\end{proof}

\begin{corollary}[The ordinary Gelfand pairs used in type \(B_3\)]
\label{cor:B3Gelfand}
The following seven pairs are Gelfand pairs; the last column is the
dimension of their double-coset algebra:
\[
\begin{array}{c|c|c}
\text{term}&H&\dim\mathscr A(W,H,\one)\\ \hline
A_\one&N_3\rtimes\langle(12)\rangle&2\\
B_\one&\langle s_0,s_1\rangle&3\\
E_\one&\langle s_0,s_2\rangle&5\\
F_\one&K&2\\
C&P&4\\
D&C_1&6\\
L_+&W^+&2
\end{array}
\]
Every term in Theorem~\ref{thm:B3list} is either one of these
permutation characters or a twist by a linear character of \(W\), and
hence is a linear Gelfand triple with the same commutant algebra.
In particular,
\[
 \Ind_{W^+}^{W}(\dd|_{W^+})=\dd+\ee
\]
is a two-dimensional twisted Gelfand term.
\end{corollary}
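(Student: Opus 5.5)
The plan is to reduce everything to Proposition~\ref{prop:Gelfandcriteria}(i). The double-coset algebra \(\mathscr A(W,H,\lambda)\) is a product of matrix algebras indexed by the constituents of \(\Ind_H^W\lambda\). So a triple is Gelfand exactly when the induced character is multiplicity-free, and then \(\dim\mathscr A\) is the number of constituents. For each of the seven untwisted terms I will read the decomposition from Table~\ref{tab:B3terms} and count the distinct irreducible summands:
\[
A_\one=1+U,\quad B_\one=1+U+R,\quad E_\one=1+U+R+\ee R+\dd R,\quad F_\one=1+\ee R,
\]
\[
C=1+\ee+R+\ee R,\quad D=1+\ee+U+\ee U+R+\ee R,\quad L_+=1+\om .
\]
These give the dimensions \(2,3,5,2,4,6,2\). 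In each case the summands are pairwise distinct among the ten irreducible characters listed before the table, so each is multiplicity-free. The identification of the displayed subgroups with these permutation characters is the content of Table~\ref{tab:B3terms}. I would check each one with a degree count: \(3=1+2\), \(6=1+2+3\), \(12=1+2+3+3+3\), \(4=1+3\), \(8=1+1+3+3\), \(12=1+1+2+2+3+3\), and \(2=1+1\). These agree with the indices \([W:H]=48/|H|\), namely \(|H|=16,8,4,12,6,4,24\). Frobenius reciprocity then settles the multiplicity of the trivial character.

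For the twisted terms, every term in Theorem~\ref{thm:B3list} except \(L_-\) has the form \(\tau\cdot\Ind_H^W\one\) with \(\tau\in\Lin(W)\). The identity \(\tau\Ind_H^W\one=\Ind_H^W(\tau|_H)\) holds. Tensoring with a linear character permutes \(\Irr(W)\), so it keeps multiplicity-freeness and the number of constituents. The commutant algebra is therefore again \(\C^{m}\) with the same \(m\). Alternatively, multiplication by \(\tau\) gives an explicit isomorphism \(e_{\one}\C W e_{\one}\to e_{\tau}\C We_{\tau}\), \(g\mapsto\tau(g)g\).

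The remaining term \(L_-\) is not of this form as stated, because \(\dd\) is restricted to \(W^+\). For this term I would use the index-two formula of Section~\ref{sec:prelim} with \(L=W\) and \(\eta=\om\):
\[
\Ind_{W^+}^W(\dd|_{W^+})=\dd+\dd\om=\dd+\ee .
\]
This is multiplicity-free with two constituents, and it equals \(\dd\cdot L_+\), so it is covered by the twisting argument anyway.

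The only step needing genuine input is confirming the decompositions in Table~\ref{tab:B3terms}, chiefly those for \(K\), \(C_1\), and \(\langle s_0,s_2\rangle\). I would take these from the table as established; if they had to be rechecked, computing the inner products \(\langle\Ind_H^W\one,\rho\rangle=\langle\one,\rho|_H\rangle_H\) from the \(B_3\) character values would do it.
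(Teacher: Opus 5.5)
Your proposal is correct and follows the paper's proof essentially step for step. The multiplicity-free decompositions are read off Table~\ref{tab:B3terms}, Proposition~\ref{prop:Gelfandcriteria}(i) turns the constituent counts into the dimensions \(2,3,5,2,4,6,2\), and twisting by a linear character of \(W\) carries this to the other terms, including \(\Ind_{W^+}^W(\dd|_{W^+})=\dd+\dd\om=\dd+\ee\), which is the paper's appeal to \(\om=\ee\dd\) and, as you note, is simply \(\dd L_+\).
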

\begin{proof}
For each untwisted term, Table~\ref{tab:B3terms} gives an MF
permutation character with the indicated number of constituents.
Proposition~\ref{prop:Gelfandcriteria}(i) gives the first statement.
Tensoring an induced character by a linear character of \(W\) twists
its restriction on \(H\) and preserves its endomorphism algebra.
The formula for \(L_-\) follows from \(\om=\ee\dd\).
\end{proof}

\begin{remark}
The perfect-model literature \cite{MZgraphs,MZperfect} already
accounts for the pairs and twists occurring in the eight perfect
families.  Corollary~\ref{cor:B3Gelfand} also makes clear that the
twenty-four model count does not represent twenty-four different
ordinary Gelfand pairs: the additional models are new exact-cover
assemblies of twists of the seven displayed pairs.
The exhaustive statement concerns the QP terms occurring in models;
it is not a classification of every Gelfand pair inside \(B_3\).
\end{remark}

\begin{example}[A natural model with a four-point term]
The family \(\mathscr N_3\) has supports
\[
 \begin{array}{c|l}
 F_1&1+\ee R\\
 E_\dd&\dd+U+\dd R+\om R+R\\
 A_\ee&\ee+\ee U\\
 \om&\om .
 \end{array}
\]
Every irreducible occurs once.
Subsection~\ref{sec:B3canonical} constructs its canonical \(W\)-graph as a disjoint union of four natural graphs.
\end{example}
\section{Type D Coxeter group}
\label{sec:D}

Recall \(E_m=\Ker\eta_m\leq C_m\) from \eqref{eq:CE}.
Write \(\sgn\mathcal M=\{\sgn_{D_n}\chi:\chi\in\mathcal M\}\) and
\(\mathcal M^\diagauto=\{\chi^\diagauto:\chi\in\mathcal M\}\).

\begin{theorem}\label{thm:D}
For odd \(n\geq5\), the two models are
\[
 \mathcal D_n=\{\Phi_k:0\leq k\leq(n-1)/2\},\qquad
 \sgn\mathcal D_n.
\]
For \(n=2m\geq6\) with \(m\) odd, the four models are
\begin{equation}
 \mathcal E_n=\{\Phi_0,\ldots,\Phi_{m-1},\Ind_{E_m}^{D_n}\one\},
 \quad \mathcal E_n^\diagauto,\quad
 \sgn\mathcal E_n,\quad \sgn\mathcal E_n^\diagauto .
 \label{eq:Dmodels}
\end{equation}
For \(4\mid n\), no model exists.
\end{theorem}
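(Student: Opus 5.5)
The proof will follow the type \(B\) argument of Theorem~\ref{thm:B}: establish existence, project to symmetric groups, and then run a degree budget. For existence, formula \eqref{eq:PhiSupport} shows that for odd \(n\) the characters \(\Phi_k\) have pairwise disjoint supports. Each unordered pair \(\{\alpha,\beta\}\) is counted exactly once, in the layer \(q=o_r(\alpha)+o_r(\beta)\), so \(\mathcal D_n\) is a model. Twisting by \(\sgn_{D_n}\) gives \(\sgn\mathcal D_n\); it is distinct from \(\mathcal D_n\) because transposition changes row statistics. For \(n=2m\), the layers \(\Phi_0,\dots,\Phi_{m-1}\) contain every nondegenerate character once. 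The degenerate associates \(\chi^{[\lambda,\epsilon_{\lambda,k}]}\) with \(o_r(\lambda)=q/2>0\) appear with the single sign \((-1)^m\). The top layer must then supply the nondegenerate even-row pairs together with exactly one associate of each even-row \(\lambda\), and the required sign is \(+\) when \(m\) is odd. I would use Lemma~\ref{lem:half}, which computes the character of \(E_m\), to show that \(\Ind_{E_m}^{D_n}\one\) has the same nondegenerate part as \eqref{eq:Ctop}, with associates of the complementary sign to those of \(\Ind_{C_m}\one\) when this is needed. Together with the sign \((-1)^{n/2}\) of the lower layers, this gives an exact cover precisely when \(m\) is odd. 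When \(4\mid n\), the signs of the lower layers would collide with \eqref{eq:Ctop}, and I expect the same collision to block every candidate once completeness is proved. Applying \(\diagauto\) and the sign twist yields the four families in \eqref{eq:Dmodels}.

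For completeness I would define \(\pi(\chi)=\sum_\lambda\langle\chi,\chi^{\{\lambda,\emptypar\}}\rangle\chi^\lambda\), which amounts to restricting to the unsigned \(S_n\) and pulling back along \(p\). This is the analogue of Lemma~\ref{lem:Bprojection}, but now there is only one projection, since \(\ee\chi^{(\alpha,\beta)}=\chi^{(\beta,\alpha)}\) restricts to the same \(D_n\) character. Frobenius reciprocity gives \(\pi(\Ind_H^{D_n}\tau)=\Ind_{pH}^{S_n}\bar\tau\) when \(\tau|_{H\cap N_n^+}=1\), and \(0\) otherwise. In the second case I would use the quotient \(D_n\to S_n\) obtained through the fork, and extend the graph character by Lemma~\ref{lem:graphextend} as before. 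The nonzero projections then form a row or column family of Theorem~\ref{thm:A}, since \(n\geq5\), and Lemma~\ref{lem:Acolumns} gives \(pH=\pair_k\times S_q\). To deal with terms whose projection vanishes, I would lift to \(B_n\) by induction (the restriction of \(\ee\)-twisted pairs), so that the budget can be computed on \(B_n\)-support counts.

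The budget is \(\chi(1)\leq \Phi_k(1)\) per nonzero layer contribution, with \(\Phi_k(1)=2^{q+k-1}a_q\). Summing over a model gives exactly \(\Gamma_{D_n}(1)\), so every term must attain equality. I would verify this core by core, reusing Table~\ref{tab:Bcore} restricted to \(D\) forms, which are the cores permitted by Proposition~\ref{prop:Lu}(iii). Unsigned pairing is not MF. The D form has kernel \(2^{2k-1}\), giving a strict inequality. Special \(C_k\) attains equality. In \(D_{2k}\), with \(q=0\), the even-special core \(E_k\) also attains equality, but only after the degenerate part is counted. The fixed-coordinate and small-kernel cases would follow Lemmas~\ref{lem:Bsmallkernel} and \ref{lem:Bfixed}, using Lemma~\ref{lem:Dprojection} in place of the \(B\)-specific deletion. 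Ranks \(5\), \(6\) and \(8\) would be checked with Proposition~\ref{prop:finite}.

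The main obstacle is the top layer \(q=0\) for even \(n\). There \(C_m\), \(C_m^\diagauto\), \(E_m\) and \(E_m^\diagauto\) all have the correct nondegenerate projection, and the budget alone cannot distinguish them. Deciding which choices are compatible requires the associate signs \(\epsilon_{\lambda,k}=(-1)^m\) of all lower layers. For each choice I would compare, via \eqref{eq:associate-sign} evaluated at \(z_m\), the signs of the associates \(\chi^{[\lambda,\pm]}\) with \(o_r(\lambda)>0\) covered below against those required from the top. When \(m\) is odd exactly one of \(E_m\) and \(E_m^\diagauto\) fits each orientation. When \(m\) is even, every top choice either repeats an associate or leaves one uncovered, which proves that no model exists.
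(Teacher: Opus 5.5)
\textbf{Gap 1: the degree budget.} The degree budget you import from type $B$ does not work in type $D$.

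First, its right side does not sum to $\Gamma_{D_n}(1)$ when $n=2m$. The top term of $\mathcal E_n$ has degree $[D_n:E_m]=2\Phi_m(1)$. In general the layer budgets total $\Gamma_{D_n}(1)-\Phi_m(1)$, because one associate of every $\lambda\vdash m$ is left unaccounted for. Taken literally, the bound therefore excludes the very model you construct.

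Second, and more seriously, the bound $\chi(1)\le\Phi_k(1)$ is violated by allowed terms that must be disposed of individually.
\begin{itemize}
\item[(a)] Zero-projection terms. An example is $Z_\one=\Ind_{D_2\times S_{n-2}}^{D_n}(\alpha\boxtimes\one)$ with $\alpha(t)\neq\alpha(u)$. It is standard parabolic, MF, and of the same degree as $\Phi_1$, yet has budget zero. Lifting to $B_n$ does not help: a $D_n$-character with no $\chi^{\{\lambda,\emptypar\}}$ constituent induces to a $B_n$-character on which both $\pi_L$ and $\pi_R$ vanish.
\item[(b)] The couplings $\Ker(\eta_k\boxtimes\sgn)\le C_k\times S_q$ and the products $E_k\times S_q$ with $q>0$. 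Each has degree $2\Phi_k(1)$, while its projection is the single layer $R_q$. In type $B$ the couplings disappeared because the even-special form is not QP there. In $D_{2k}$ it is QP (Proposition~\ref{prop:Lu}(iii)), so Table~\ref{tab:Bcore} does not transfer.
\end{itemize}
With such terms available, summing the bounds forces no equality, even for odd $n$.

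The paper uses no budget in type $D$. It does explicit constituent bookkeeping instead:
\begin{itemize}
\item repeated constituents exclude the couplings (the Cauchy sums \eqref{eq:DcoupleCauchy}) and the $E_k\times S_q$ products;
\item the constituent $\chi^{\{(2,1^{n-3}),(1)\}}$ excludes every term containing $Z_{\sgn}$, after which the zero-projection terms reduce to $Z_\one$;
\item the target $\{(1^{n-2}),(1,1)\}$ forces $\Phi_0$, which then excludes the reverse term $P$;
\item $\Lambda_*$ forces $\Phi_{k_*}$, whose overlap with $Z_\one$ eliminates it.
\end{itemize}

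\textbf{Gap 2: the associate bookkeeping.} After $\Phi_0,\dots,\Phi_{m-1}$, three kinds of characters remain uncovered:
\begin{itemize}
\item the nondegenerate even-row pairs;
\item both associates of each even-row $\lambda\vdash m$;
\item for \emph{every} $\lambda\vdash m$ with $o_r(\lambda)>0$, the associate of sign $-(-1)^m$, complementary to the one supplied by its layer.
\end{itemize}
For odd $m$ there are no even-row partitions of $m$. Your description would then make $\Ind_{C_m}^{D_n}\one$ the top term and leave all $p(m)$ plus associates uncovered.

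You also misread Lemma~\ref{lem:half}. It says that $E_m$ \emph{adds} $\sum_{\lambda\vdash m}\chi^{[\lambda,+]}$ to the character of $C_m$; it does not replace the associates of $C_m$ by complementary ones. That addition completes the cover when $m$ is odd, and doubles the even-row plus associates when $m$ is even.

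For $4\mid n$ the obstruction is not a collision with \eqref{eq:Ctop}: the lower layers and \eqref{eq:Ctop} involve disjoint sets of $\lambda$. The obstruction is that no MF top candidate ($C_m$, its diagram image, or the D-form inflation) supplies the missing associate of a non-even-row partition such as $(1^m)$.

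\textbf{Gap 3: $D_4$.} $D_4$ belongs to the case $4\mid n$ but lies outside your general argument, because Theorem~\ref{thm:A} has extra models for $S_4$. The paper checks $D_4,\dots,D_7$ by finite computation and runs the general argument only for $n\ge8$.
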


\subsection{The even-special character and existence}

\begin{lemma}\label{lem:half}
In the associate convention \eqref{eq:associate-sign},
\[
 \Ind_{C_m}^{D_{2m}}\eta_m=\sum_{\lambda\vdash m}\chi^{[\lambda,+]}.
 \]
Consequently
\begin{equation}
 \Ind_{E_m}^{D_{2m}}\one
 =\Ind_{C_m}^{D_{2m}}\one+\sum_{\lambda\vdash m}\chi^{[\lambda,+]},
 \label{eq:half}
\end{equation}
and this character is MF if and only if \(m\) is odd.
\end{lemma}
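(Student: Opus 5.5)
The plan is to compute \(\Ind_{C_m}^{D_{2m}}\eta_m\) by induction in stages through a convenient intermediate subgroup. Then the consequence \eqref{eq:half} follows from the index-two formula, since \(E_m=\Ker\eta_m\) has index two in \(C_m\):
\[
 \Ind_{E_m}^{D_{2m}}\one=\Ind_{C_m}^{D_{2m}}\one+\Ind_{C_m}^{D_{2m}}\eta_m .
\]
The multiplicity-freeness criterion is then read off by comparing this with \eqref{eq:Ctop}.

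For the main formula, I would note that \(C_m\leq D_{2m}\) contains the pair flips \(c_i\), and \(\eta_m\) is \(-1\) on each \(c_i\) and \(+1\) on \(a_i,b_i\). I would try to realize \(\eta_m\) as the restriction of a character under which the pure sign kernel acts by a fixed nontrivial character. The cleanest route is Clifford theory relative to \(N_{2m}^+\). The restriction \(\eta_m|_{C_m\cap N_{2m}^+}\) is the character \(\xi\) of \(N_{2m}^+=\langle f_if_j\rangle\) that is \(-1\) on each \(c_i\), restricted to \(\langle c_i\rangle\). Every character of \(N_{2m}^+\) extending it, after summing over the orbit, yields constituents \(\chi^{\{\alpha,\beta\}}\) or associates whose \(N\)-isotypic data has weight exactly \(m\) on \(\pm\) coordinates. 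This forces \(|\alpha|=|\beta|=m\). I would then use Frobenius reciprocity:
\[
 \langle\Ind_{C_m}^{D_{2m}}\eta_m,\chi\rangle=\langle\eta_m,\chi|_{C_m}\rangle .
\]
The restriction of \(\chi^{(\alpha,\beta)}\), with \(|\alpha|=|\beta|=m\), to the stabilizer \(B_m\times B_m\)-type structure reduces to the \(S_m\)-diagonal pairing of \(\alpha\) and \(\beta\). Pairs are interchanged diagonally, so one obtains \(\langle\chi^\alpha,\chi^\beta\rangle_{S_m}=\delta_{\alpha\beta}\). Hence only the degenerate characters \(\chi^{[\lambda,\pm]}\) occur, each \(\lambda\vdash m\) contributing exactly once in total, since the \(B_{2m}\)-induction \(\Ind_{C_m}^{B_{2m}}\eta_m\) gives \(\sum_\lambda\chi^{(\lambda,\lambda)}\) and restriction splits each term into its two associates.

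It remains to decide the sign: the \(D_{2m}\)-induction picks one associate per \(\lambda\). I would evaluate the character at \(z_m=a_1\cdots a_m\), which is central in \(C_m\) and has \(\eta_m(z_m)=1\). Using \eqref{eq:associate-sign}, together with the fact that the \(B_{2m}\)-induced character equals the \(D_{2m}\)-induced character plus its \(\diagauto\)-conjugate, I would compare \(\Ind\eta_m(z_m)\) with the Frobenius formula summed over the \(D_{2m}\)-conjugates of \(z_m\) lying in \(C_m\). This sign bookkeeping is the main obstacle. If the direct computation is messy, I would instead invoke the signed type \(D\) Littlewood--Richardson rule of \cite{Taylor} and \cite[(5.8)]{MZperfect}, as in the proof of the uniform sign in \eqref{eq:Ctop}, applied to \(\Ind_{C_1^{m}}\) with twisted factors. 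The aim is to show that every summand carries \(+\), consistently with the \(\epsilon=+\) appearing in \eqref{eq:Ctop}.

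For the MF criterion, observe that \(\Ind_{C_m}^{D_{2m}}\one\) contains \(\chi^{[\lambda,+]}\) exactly for even-row \(\lambda\vdash m\). When \(m\) is odd, no partition of \(m\) is even-row, so the two supports are disjoint and the sum is MF. When \(m\) is even, \(\lambda=(m)\) is even-row, so \(\chi^{[(m),+]}\) appears twice and the sum is not MF.
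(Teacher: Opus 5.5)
Your structural reduction is sound and matches the paper. Since $C_m\le D_{2m}$ and $\Ind_{C_m}^{B_{2m}}\eta_m=\sum_{\lambda\vdash m}\chi^{(\lambda,\lambda)}$, the character $\Ind_{C_m}^{D_{2m}}\eta_m$ has no nondegenerate constituent and exactly one associate $\chi^{[\lambda,\epsilon_\lambda]}$ for each $\lambda$. The paper gets the $B_{2m}$ formula more cleanly than your Clifford sketch over $N_{2m}^+$: it uses $\Ind_{C_1}^{B_2}\eta_1=\chi^{((1),(1))}$, wreath induction and the Cauchy identity. Your index-two step and comparison with \eqref{eq:Ctop} are also the paper's.

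The gap is that the real content of the first assertion, $\epsilon_\lambda=+$ for every $\lambda$, is left as ``the main obstacle.'' Your fallback, that the signs should be consistent with the plus associates in \eqref{eq:Ctop}, is not an argument. The associate chosen by the $\eta_m$-twisted induction is not determined by the untwisted one. Your final MF criterion depends on exactly this: if the even-row $\lambda$ carried minus associates, the $E_m$ term would be MF for every $m$. Citing the signed Littlewood--Richardson rule ``with twisted factors'' is likewise only a pointer, not a computation.

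The missing step is a short count, which the paper carries out.
\begin{itemize}
\item Because $C_{B_{2m}}(z_m)=C_m\le D_{2m}$, the $B_{2m}$-class of $z_m$ splits into the $D_{2m}$-classes of $z_m$ and $z_m^\diagauto$, each with centralizer $C_m$.
\item The induced-character formula then gives $(\Ind\eta_m)(z_m)-(\Ind\eta_m)(z_m^\diagauto)=\sum_h\epsilon(h)\eta_m(h)$. The sum runs over the $h\in C_m$ in the $B_{2m}$-class of $z_m$, and $\epsilon(h)=\pm1$ records which $D_{2m}$-class contains $h$.
\item Such $h$ lie over involutions $w\in S_m$ of the pairs. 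On each fixed pair $h$ is $a_i$ or $c_ia_i$; on swapped pairs it is $(g,g^{-1})$, $g\in C_2\times C_2$, followed by the interchange. This gives $2^m$ elements for each $w$.
\item The decisive observation is $\epsilon(h)=\eta_m(h)$. The conjugator $f_{2i-1}$ taking $a_i$ to $c_ia_i$ has odd sign parity and also flips $\eta_m$. Swapped pairs contribute even parity and $\eta_m$-value one.
\end{itemize}
Hence every summand equals $+1$, and the difference is $2^m\#\{w\in S_m:w^2=1\}=2^m\sum_\lambda f^\lambda$. By \eqref{eq:associate-sign} the same difference equals $2^m\sum_\lambda\epsilon_\lambda f^\lambda$. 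This maximal value forces $\epsilon_\lambda=+$ for all $\lambda$.
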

\begin{proof}
On one pair, \(\Ind_{C_1}^{B_2}\eta_1\) is the two-dimensional irreducible character.
Wreath induction and the Cauchy identity therefore give
\[
 \Ind_{C_m}^{B_{2m}}\eta_m
       =\sum_{\lambda\vdash m}\chi^{(\lambda,\lambda)} .
\]
By Clifford theory, \(\Ind_{C_m}^{D_{2m}}\eta_m\) chooses exactly one associate from each pair and has no nondegenerate constituent.
It remains to determine the signs.

Evaluate its difference on \(z_m\) and \(z_m^\diagauto\).
The centralizer \(C_{B_{2m}}(z_m)\) is \(C_m\), and it is contained in \(D_{2m}\); thus the \(B_{2m}\)-class splits into these two \(D_{2m}\)-classes, with centralizer order \(|C_m|\) in each.
An element \(h\in C_m\) of this cycle type permutes the \(m\) coordinate pairs as an involution \(w\in S_m\).
On a fixed pair it is \(a_i\) or \(c_i a_i\), giving two choices.
On two interchanged pairs it has the form \((g,g^{-1})\) followed by their interchange, with \(g\in C_2\times C_2\), giving four choices.
If \(w\) has \(j\) transpositions, there are therefore \(2^{m-2j}4^j=2^m\) such elements.

The class sign, positive for the class of \(z_m\), is \(\eta_m(h)\).
To see this directly, replacing \(a_i\) by \(c_i a_i\) on a fixed pair requires a conjugator with odd pure sign parity and changes both signs.
The contribution of an interchanged pair of pairs has even pure sign parity and \(\eta_m\)-value one.
Consequently each \(h\) contributes \(\eta_m(h)^2=1\) to the difference of the induced values; the centralizer-to-subgroup factor is one.
The induced-character formula gives
\[
 \left(\Ind_{C_m}^{D_{2m}}\eta_m\right)(z_m)
 -\left(\Ind_{C_m}^{D_{2m}}\eta_m\right)(z_m^\diagauto)
 =2^m\#\{w\in S_m:w^2=1\}
 =2^m\sum_{\lambda\vdash m}f^\lambda .
\]
The last equality is the symmetric-group Gelfand degree identity, also following from \eqref{eq:Apieri}.
By \eqref{eq:associate-sign}, choosing a minus associate changes its positive contribution to a negative one.
The displayed maximal sum forces all choices to be plus.
This proves the first assertion.
The index-two formula proves \eqref{eq:half}.
By \eqref{eq:Ctop}, overlap occurs exactly for even-row partitions of \(m\).
There are such partitions if and only if \(m\) is even.
\end{proof}

When \(m\) is odd, every partition of \(m\) has a positive number of odd rows.
Formula \eqref{eq:PhiSupport} says that \(\Phi_0,\ldots,\Phi_{m-1}\) contain all minus associates, once each, while \eqref{eq:half} supplies all plus associates.
The nondegenerate characters are partitioned by their total number of odd rows.
This proves existence in \eqref{eq:Dmodels}.
The odd-rank existence follows directly from the same support partition, with no degenerate characters.

\begin{example}[The new top term in \(D_6\)]
Here \(m=3\), \(|C_3|=384\), and \(|E_3|=192\).
The new term has degree \(23040/192=120\).
Its extra half-degenerate part consists of
\(\chi^{[(3),+]},\chi^{[(2,1),+]},\chi^{[(1^3),+]}\), of degrees \(10,40,10\).
The ordinary part also has degree \(60\).
The other three layers have degrees \(32,240,360\), giving a model of degree \(752\).
\end{example}

\subsection{Projection and complete core reduction}

Define
\[
 \pi_D(\chi)=\sum_{\lambda\vdash n}
      \langle\chi,\chi^{\{\lambda,\emptypar\}}\rangle\chi^\lambda .
\]
For \(\Ind_H^{D_n}\tau\), this is zero unless \(\tau\) is trivial on \(H\cap N_n^+\); otherwise it is \(\Ind_{pH}^{S_n}\bar\tau\).
Except in \(D_2\times S_{n-2}\), every ambient linear character directly descends through \(p\).
We record why this exceptional source still gives an allowed A-type projection.

\begin{lemma}\label{lem:Dprojection}
Every nonzero projection \(\pi_D\) of an allowed term is an allowed symmetric-group term.
For \(n\geq5\), these projections in a model form either the complete row family or the complete column family.
A zero-projection MF term must come from \(D_2\times S_{n-2}\) with a linear character taking opposite values on \(t,u\).
\end{lemma}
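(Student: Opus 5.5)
Mirror Lemma~\ref{lem:Bprojection}, with the extra source \(D_2\times S_{n-2}\) handled separately. Write a term as \(\Ind_H^{D_n}\tau\) with \(\tau=\sigma|_H\) and \(\sigma\in\Lin(W_J)\), and put \(K=H\cap N_n^+\).

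By Frobenius reciprocity and the remark before the lemma, \(\pi_D(\Ind_H^{D_n}\tau)=\Ind_{pH}^{S_n}\bar\tau\) when \(\tau|_K=1\), and is \(0\) otherwise. The first step is therefore to list the linear characters of the standard parabolic subgroups of \(D_n\) by their restriction to the pure sign kernel \(W_J\cap N_n^+\). A maximal parabolic \(W_J\) is \(D_j\times S_{n-j}\), or \(S_n\) (two conjugacy embeddings, via either fork), or \(W_S=D_n\). For \(j\geq3\), the kernel \(N_j^+\) lies in the commutator subgroup of \(D_j\), since it is generated by the \(f_af_b\), which are commutators of transpositions with \(t\). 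Hence every linear character of \(D_j\) is trivial on \(N_j^+\) and is inflated from \(S_j\) through \(p\). Consequently \(\bar\tau\) is the restriction to \(pH\) of a linear character of the Young subgroup \(p(W_J)\), and \(pH\) is QP by the Coxeter quotient part of Proposition~\ref{prop:QPfacts}(ii) applied to \(p\). So the projection is an allowed \(S_n\)-term. The same applies to the unsigned sources \(S_n\), where \(K=1\).

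For \(D_2\times S_{n-2}\), note \(D_2=\langle t,u\rangle\cong A_1\times A_1\) with pure sign kernel \(\langle tu\rangle=\langle f_1f_2\rangle\). A linear character kills \(f_1f_2\) exactly when it takes equal values on \(t\) and \(u\). In that case it factors through \(p\) to \(\langle(12)\rangle\) with value \(\sigma(t)=\sigma(u)\), which again is a Young-subgroup character. If the values on \(t\) and \(u\) differ, \(\sigma(f_1f_2)=-1\). The projection then vanishes whenever \(f_1f_2\in H\). If \(f_1f_2\notin H\), then \(K\) is a proper subgroup of \(N_2^+\), hence trivial, and \(\tau|_K=1\). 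In that case one uses the argument of Lemma~\ref{lem:graphextend}: the map \(H\to S_n\) is injective on \(H\cap D_2\), and the graph of \(\bar\tau\) over \(pH\) is the image of \(H\) under the Coxeter quotient \(D_2\times S_{n-2}\to A_1\times S_{n-2}\times\ldots\) that records \(\sigma(t)\). That lemma then extends \(\bar\tau\) to a Young subgroup. This case analysis gives the last assertion: a zero projection of an MF term arises only when \(\tau|_K\neq1\), and in every other source \(\tau|_K\) is trivial automatically.

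For the model statement, \(\langle\Gamma_{D_n},\chi^{\{\lambda,\emptypar\}}\rangle=1\) for every \(\lambda\), and the \(\chi^{\{\lambda,\emptypar\}}\) are distinct irreducibles because \(\lambda\neq\emptypar\) for \(n\geq1\). Summing \(\pi_D\) over a model therefore gives \(\sum_\lambda\chi^\lambda=\Gamma_{S_n}\). The nonzero projections are allowed \(S_n\)-terms with disjoint supports, so they form an \(S_n\)-model. By Theorem~\ref{thm:A} with \(n\geq5\), this model is the row family or the column family.

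I expect the main obstacle to be the exceptional \(D_2\) case with \(f_1f_2\notin H\). There one must check that the descended character really extends to a Young subgroup. If the graph argument is awkward, I would instead argue that \(\tau\) is induced from a subgroup meeting \(N_2^+\) trivially, and use the corank obstruction to rule out anything that is not already covered.
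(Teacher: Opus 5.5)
\textbf{Verdict: genuine gap.} Your handling of the sources other than \(D_2\times S_{n-2}\) matches the paper, and so does your proof of the family assertion (summing \(\pi_D\) over a model gives \(\Gamma_{S_n}\), then Theorem~\ref{thm:A}). One small point: your reason why \(N_j^+\) lies in the commutator subgroup is wrong as stated, since \(f_af_b\) is not a commutator of a transposition with \(t\). Use instead \(f_1f_2=[(12),f_1f_3]\), or note that \(t,u\) are conjugate in \(D_j\) for \(j\geq3\). The gap is in the one case the lemma is really about: \(H\leq D_2\times S_q\) with \(q=n-2\), \(\sigma(t)\neq\sigma(u)\), and \(tu\notin H\).

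\textbf{Why the step fails.} No Coxeter quotient sends \(H\) to the graph of \(\bar\tau\) over \(pH\). The map \(h\mapsto(p(h),\chi(h))\), with \(\chi\) the mixed character, sends one fork to \(((12),s')\), a product of two simple reflections. So Proposition~\ref{prop:QPfacts}(ii) says nothing about its image. A genuine Coxeter identification \(D_2\times S_q\cong A_1\times S_2\times S_q\) sends one fork to the \(A_1\) generator and the other to \((12)\). The image of \(H\) is then the graph of the \(t\)-parity \(a\) over the \(u\)-projection, not over \(pH\). Applying Lemma~\ref{lem:graphextend} there, and to the swapped identification, gives at best \(a=\delta b+\sum_{I}\epsilon_i\) and \(b=\delta' a+\sum_{I'}\epsilon_i\). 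When \(\delta=\delta'=1\), both reduce to the relation \(c=a+b=\sum\epsilon_i\) and do not express \(a\) itself. That is exactly when \(H\) could contain an element with \(a=b=1\) and even tail on every orbit. Then \((-1)^a\) is not a function of \(c\) and the \(\epsilon_i\), so \(\bar\tau\) would not extend to any Young subgroup containing \(pH\). Your fallback via the corank obstruction does not touch the extension question, since \(D_2\times S_{n-2}\) is already maximal.

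\textbf{The missing idea.} You need the structural input the paper uses. Transport \(H\) through the Coxeter isomorphism \(D_2\times S_q\cong S_2\times S_2\times S_q\), a standard parabolic subgroup of \(S_{q+4}\). By restriction and parabolic induction, \(H\) is QP there, so Proposition~\ref{prop:Lu} gives its structure. Each artificial two-point orbit is fixed, free, or coupled to tail factors through a total-even factor; the diagonal \(S_2\) is the total-even subgroup of \(S_2\times S_2\). Distinct couplings use disjoint tail factors. Putting both artificial orbits in one such factor, or making both free, would put \(tu\) in \(H\). This case analysis shows that \(a\) and \(b\) are both linear combinations of \(c\) and the \(\epsilon_i\) on \(H\). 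That is precisely why the mixed character extends to the merged Young subgroup.

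\textbf{A further omission.} For the last assertion you examined only maximal sources. You must invoke Proposition~\ref{prop:corank} explicitly. A mixed character on, say, \(D_2\times S_{q_1}\times S_{q_2}\) can also give zero projection, and only the corank obstruction excludes it for MF terms.
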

\begin{proof}
For the exceptional source write \(D_2=\langle t,u\rangle\cong S_2\times S_2\).
A mixed character descends from \(H\) only if \(tu\notin H\).
Embed the two \(S_2\)'s and the tail \(S_q\) in \(S_{q+4}\).
Lu's list permits coupling an artificial two-point orbit only through a total-even factor, including its diagonal \(S_2\) special case.
Both artificial orbits cannot lie in the same such factor, since that would put \(tu\) in \(H\).
Write \(a,b\in\F_2\) for their two parities, \(c=a+b\) for the parity of the merged two-point orbit under \(p\), and \(\epsilon_i\) for the parities of the consecutive tail orbits.
A coupled artificial orbit has an equation \(a=\sum_{i\in I}\epsilon_i\), or the analogous equation for \(b\); distinct couplings use disjoint tail factors.
If \(a\) is free and \(b\) is coupled, then \(a=c+\sum_{i\in I}\epsilon_i\).
The case with \(a,b\) exchanged is identical, and a fixed artificial orbit has parity zero.
Both artificial orbits cannot be independent free factors, since that too would include \(tu\).
Thus in every case both \(a\) and \(b\) are linear combinations of \(c\) and the \(\epsilon_i\).
A mixed character is \((-1)^a\) or \((-1)^b\), multiplied by a tail sign.
It therefore extends to a product of signs on the merged standard Young subgroup.
The Coxeter quotient also makes the subgroup image QP, as required for an allowed A-type term.
Theorem~\ref{thm:A} gives the family assertion.

All other maximal parabolic sources have linear characters trivial on their sign kernel.
The corank obstruction excludes smaller sources.
This proves the last assertion.
\end{proof}

From now on the general-rank argument assumes \(n\geq8\); ranks \(4,5,6,7\) follow by finite computation.
After a global sign twist, take the row projection.
By Lemma~\ref{lem:Acolumns}, its \(q=n-2k<n\) term has
\(pH=\pair_k\times S_q\); the final \(R_n^A\) has image \(S_n\).

For \(k\geq2,q\geq2\), a full-group source has a same-oriented Pieri projection with repetition.
In \(D_{2k}\times S_q\), the signed projection \(L\) is one of
\[
 \pair_k,\qquad N_{2k}^+\rtimes\pair_k,\qquad C_k,\qquad E_k .
\]
Goursat's lemma retains the product and the binary couplings
\begin{equation}
 H=\Ker(\varphi\boxtimes\sgn),\qquad
 \varphi|_{L\cap N_{2k}^+}\ne1 .
 \label{eq:Dcouple}
\end{equation}
The character of this source is a positive sum
\begin{equation}
 \Ind_{L\times S_q}^{D_n}(\one\boxtimes\sgn)
 +\Ind_{L\times S_q}^{D_n}(\varphi\boxtimes\one).
 \label{eq:Dcouplechar}
\end{equation}
For \(k\geq3\), the sign kernels of the D and E forms have no invariant nontrivial linear character.
For the E form this is the even-vector-space argument with three pair indices used in type B.
For the D form, within-pair swaps first kill pair flips, reducing to the same argument.
Thus only \(C_k\) remains in \eqref{eq:Dcouple}.

Its nontrivial sign-kernel character is \(\eta_k\), possibly multiplied by the within-pair and pair-permutation signs.
Induction to \(B_{2k}\) gives one of
\begin{equation}
 \sum_{\lambda\vdash k}\chi^{(\lambda,\lambda)},\qquad
 \sum_{\lambda\vdash k}\chi^{(\lambda,\lambda^\transpose)}.
 \label{eq:DcoupleCauchy}
\end{equation}
The within-pair sign simultaneously transposes the summation variables.
For a horizontal tail \(q\geq3\), the first sum has two predecessors \((k)\), \((k-1,1)\) at
\[
 ((k,1),(k+q-2,1)),
\]
and the second has two at
\(((k+q-2,1),(2,1^{k-1}))\).
Both targets are nondegenerate.
For \(q=2\), the first additional term overlaps the ordinary \(\Phi_k\) at
\(\{(k+1),(k,1)\}\).
The second has a repeated \(\{(k,1),(2,1^{k-1})\}\) for \(k\geq3\).
Hence \eqref{eq:Dcouplechar} is not MF.

For \(k=2\), the complete local linear-graph test is used in place of the invariant-character assertion.
Enumerate the linear characters of the three explicit cores and test the graphs of those nontrivial on their pure sign kernel in the product with \(A_1\), using QP1 and QP2 at every possible basepoint.
The matrices, accepted heights, and rejected-basepoint witnesses are recorded in \cite{ZhangData}.
All such graphs of the D and E forms fail QP; for the C form, only \(\eta_2\) and its Coxeter-sign multiple survive.
They give the first sum in \eqref{eq:DcoupleCauchy}, already excluded.
The quotient \(S_q\to A_1\) by permutation sign makes this one local test valid for all \(q\geq2\).

The direct E-form product also fails MF when \(q>0\).
Its extra associate indexed by \(\lambda=(k)\), followed by vertical Pieri, contains
\[
 \{(k+1,1^{q-1}),(k)\}.
\]
Its total number of odd rows is \(q\), so the same constituent occurs in \(\Phi_k\).
The unsigned product has parabolic corank at least two.
A reverse source \(D_q\times S_{2k}\) contains an induced positive subcharacter from \(L\times\pair_k\).
The unsigned pairing core has the repeated bipartition \(((2k-2),(2))\).
Choose a one-component extension from the \(B_q\)-induction of its other factor.
The resulting repeated target has component sizes \(2k-2+q\) and \(2\), so is nondegenerate and gives the same repetition in \(D_n\).
For \(q=2\), the mixed fork sign is treated by its index-two extensions; the extension supplying the required A projection is the uniform-sign positive term just used.

We have proved that for \(k\geq2,q\geq2\), a nonzero row-projection MF term is an ordinary \(\Phi_k\), or a proper subcharacter arising from the D form.
The latter misses every nondegenerate target for which both row-even remainders are nonempty.

For \(q=1\), if \(H\) fixes the signed fixed coordinate, apply the same core list.
If it changes that sign, its even part contains a generating rotation doing so.
In type \(D\), such a rotation, while fixing the absolute coordinate, is a simultaneous sign change of that coordinate and a core coordinate.
Transitivity of \(\pair_k\) on the core then generates all \(N_n^+\).
Thus \(H=p^{-1}(\pair_k\times S_1)\), whose character is an inflation of \(R_1^A\), and is again a proper subcharacter missing the same mixed targets.
For \(q=0,n=2m\), the only MF top candidates are the D-form inflation, \(C_m\), and \(E_m\).
The unsigned pairing is not MF.
Lemma~\ref{lem:half} determines exactly when the E term is MF.

\subsection{Eliminating the zero-projection terms}

The following support constraints determine the possible zero-projection summands.
Put \(q=n-2\geq6\).
The \(R_q^A\)-projection can lift from \(D_2\times S_q\) to
\[
 \Phi_1,\qquad \Phi_1+Z_{\one},\qquad
 Z_\gamma=\Ind_{D_2\times S_q}^{D_n}(\alpha\boxtimes\gamma),
 \quad\alpha(t)\ne\alpha(u),\quad\gamma\in\{\one,\sgn\}.
\]
Here the second expression is the additional index-two-graph possibility; the notation \(Z_\gamma\) also describes the positive subcharacters of a zero-projection term.
In the reverse source, the only further \(R_q^A\)-lift is
\[
 P=\Ind_{D_q\times S_2}^{D_n}(\sgn\boxtimes\one).
\]
The single-image list with \(q\geq6\) and Goursat's lemma justify this enumeration: its \(D_q\) projection is unsigned \(S_q\) or \(D_q\), and neither has a linear character nontrivial on its pure sign kernel to support a coupling.
The unsigned alternative has corank at least two.
Pieri gives the five nondegenerate constituents of \(P\):
\begin{equation}
 \begin{gathered}
 \{(3,1^{q-1}),\emptypar\},\quad
 \{(2,1^q),\emptypar\},\quad
 \{(2,1^{q-1}),(1)\},\\
 \{(1^q),(2)\},\quad
 \{(1^{q+1}),(1)\}.
 \end{gathered}
 \label{eq:Dreversefive}
\end{equation}
Only the last is in row \(n\).

Each of \(\Phi_1,\Phi_1+Z_{\one},P\) contains
\[
 \chi^{\{(2,1^{n-3}),(1)\}} .
\]
So does \(Z_{\sgn}\).
Hence every term containing \(Z_{\sgn}\) is already excluded.

Consider any zero-projection term.
It contains \(Z_{\one}\) or \(Z_{\sgn}\) by positivity and Frobenius reciprocity.
Its \(S_q\)-projection \(L\) must be transitive, by the corank obstruction.
If \(L\) is even, the two tail signs have the same restriction, so the term contains the excluded \(Z_{\sgn}\).
This treats alternating, even-pairing, and exceptional transitive factors.
If \(L=\pair_{q/2}\), the two summands \(s_{(q)}\) and \(s_{(q-2,2)}\) in its permutation character give two contributions to
\(\{(2),(q-1,1)\}\) after mixed \(D_2\)-induction.
Thus that case is not MF.
The only remaining possibility is \(L=S_q\).
Zero projection requires \(tu\in H\).
Its kernel in \(D_2\) is therefore \(D_2\) or \(\langle tu\rangle\).
The first gives the bare \(Z_{\one}\); the second is a trivial or sign graph.
The trivial graph doubles the same nondegenerate constituents, and the sign graph contains \(Z_{\sgn}\).
Thus the only remaining zero-projection MF term is \(Z_{\one}\), whose nondegenerate component partitions have at most two rows.

Now
\[
 \lambda_0=\{(1^{n-2}),(1,1)\}
\]
lies in row \(n\) and is absent from \(Z_{\one}\), from \(P\), and from every other row candidate.
It forces the \(R_n^A\)-lift to be \(\Phi_0\), rather than the single sign character.
The last constituent of \eqref{eq:Dreversefive} now excludes \(P\).

Take \(q_*=2\) if \(n\) is even and \(q_*=3\) if \(n\) is odd, and set \(k_*=(n-q_*)/2\geq3\).
The target
\[
 \Lambda_*=\{(2k_*-2,1^{q_*}),(2)\}
\]
has two nonempty row-even remainders and a component of at least three rows.
It is absent from the proper D-form subcharacters and from \(Z_{\one}\).
Hence it forces the whole layer \(\Phi_{k_*}\).
For even \(n\), this layer overlaps \(Z_{\one}\) at
\(\{(n-3,1),(2)\}\); for odd \(n\), it overlaps at
\(\{(n-2),(1,1)\}\).
Thus \(Z_{\one}\) and \(\Phi_1+Z_{\one}\) are excluded, forcing \(\Phi_1\).
Every other positive-\(q\) layer is then forced by a target with two nonempty row-even remainders.
This also excludes the full-kernel inflation at \(q=1\).

\subsection{The associate constraint}

\begin{proof}[Completion of Theorem~\ref{thm:D}]
The preceding argument forces all ordinary positive-\(q\) layers.
For odd \(n\), these already cover every irreducible character, giving the row model and its sign dual.

Let \(n=2m\).
For every \(\lambda\vdash m\), an ordinary layer supplies one associate.
The ordinary top \(C_m\) supplies one only for even-row \(\lambda\), and the D-form inflation supplies none.
If \(m\) is even, an E top is not MF by Lemma~\ref{lem:half}.
For instance, \(\lambda=(1^m)\) is not even-row: one associate is supplied by its positive-\(q\) layer, while neither remaining top candidate can supply the other.
Thus an associate necessarily remains uncovered.

If \(m\) is odd, all partitions of \(m\) have positive odd-row count.
The positive-\(q\) terms have supplied one half of all the associate pairs.
Only the E top can supply the complementary complete half.
For each \(\lambda\vdash m\), the unique ordinary layer containing an associate is the layer with \(q=2o_r(\lambda)\).
Choosing the half supplied by the top term therefore forces the complementary embedding of every layer containing associates; independent choices of those embeddings cannot give a cover.
Layers without associates are diagram-invariant.
There are exactly two row families, exchanged by \(\diagauto\), and two column families.
The row and column families are distinct already under \(\pi_D\), by Theorem~\ref{thm:A}; the two row families differ on the top associate choices, as do the two column families.

For \(D_4,D_5,D_6,D_7\), Proposition~\ref{prop:finite} gives respectively \(37,38,74,65\) distinct QP MF columns and \(0,2,4,2\) exact covers.
The character-identification records in \cite{ZhangData} match every cover with the construction just described, including its associate choice.
These finite cases complete the proof.
\end{proof}

\begin{example}[The parity obstruction at the first even ranks]
For \(D_4\), \(m=2\) and \((2)\) is even-row.
Its plus associate occurs in both terms on the right of \eqref{eq:half}, so the E top is not MF.
For \(D_6\), \(m=3\) and no partition of \(3\) is even-row, so the same construction has no overlap.
For \(D_8\), the even-row partitions \((4)\) and \((2,2)\) again create repetitions.
These examples illustrate the associate obstruction proved for every \(m\).
\end{example}

\begin{corollary}[Gelfand pairs and the even-special obstruction]
\label{cor:DGelfand}
For \(n=2k+q\), every ordinary layer
\[
 \Phi_k=
 \Ind_{C_k\times S_q}^{D_n}(\one\boxtimes\sgn_{S_q})
\]
defines a linear Gelfand triple.
In particular, \((D_{2m},C_m)\) is an ordinary Gelfand pair for every
\(m\geq2\).  For the even-special subgroup,
\[
 \boxed{\ (D_{2m},E_m)\text{ is a Gelfand pair}
          \quad\Longleftrightarrow\quad m\text{ is odd}.\ }
\]

More precisely, set
\[
 a_m=\sum_{i=0}^m p(i)p(m-i),\qquad
 d_m=\begin{cases}0,&m\text{ odd},\\p(m/2),&m\text{ even},\end{cases}
 \qquad
 r_m=\frac{a_m+2p(m)-3d_m}{2}.
\]
Then
\begin{equation}
 \mathscr A(D_{2m},E_m,\one)
 \cong \C^{r_m}\times\operatorname{Mat}_2(\C)^{d_m},
 \qquad
 |E_m\backslash D_{2m}/E_m|
 =\frac{a_m+2p(m)+5d_m}{2}.
 \label{eq:Dcommutant}
\end{equation}
Thus the failure in type \(D_{4r}\) consists of exactly \(p(r)\)
matrix blocks of size two.
\end{corollary}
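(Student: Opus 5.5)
The plan is to read everything off the explicit decompositions already established and then apply Proposition~\ref{prop:Gelfandcriteria}. For the ordinary layers, formula~\eqref{eq:PhiSupport} writes \(\Phi_k\) as a sum of distinct irreducible characters. The unordered pairs \(\{\alpha,\beta\}\) with \(\alpha\ne\beta\) index distinct nondegenerate characters. Each \(\lambda\) contributes exactly one associate \(\chi^{[\lambda,\epsilon_{\lambda,k}]}\). Hence \(\Phi_k\) is MF, and Proposition~\ref{prop:Gelfandcriteria}(i) gives the linear Gelfand triple. Taking \(q=0\), or reading \eqref{eq:Ctop} directly, shows that \(\Ind_{C_m}^{D_{2m}}\one\) is MF, so \((D_{2m},C_m)\) is a Gelfand pair.

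For \(E_m\), I would start from \eqref{eq:half} in Lemma~\ref{lem:half}:
\[
 \Ind_{E_m}^{D_{2m}}\one=\Ind_{C_m}^{D_{2m}}\one+\sum_{\lambda\vdash m}\chi^{[\lambda,+]} .
\]
The equivalence ``Gelfand pair \(\iff\) \(m\) odd'' is then the MF statement of that lemma, combined with Proposition~\ref{prop:Gelfandcriteria}(i). The refined statement needs an exact count of multiplicities, which I organize as follows. Halving the rows identifies ordered pairs \((\alpha,\beta)\) of even-row partitions with \(|\alpha|+|\beta|=2m\) with ordered pairs of partitions of total size \(m\). So there are \(a_m\) such ordered pairs. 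Among them, the diagonal ones \(\alpha=\beta\vdash m\) correspond to \(\mu\vdash m/2\), so there are \(d_m\) of them, and there are none for odd \(m\). Consequently \eqref{eq:Ctop} contributes:
\begin{itemize}
\item \((a_m-d_m)/2\) nondegenerate constituents, and
\item \(d_m\) plus associates.
\end{itemize}
Each of these occurs once.

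The second summand adds \(p(m)\) plus associates. Exactly the \(d_m\) even-row ones coincide with associates already present, and the minus associates never occur. Therefore \(\Ind_{E_m}^{D_{2m}}\one\) has:
\begin{itemize}
\item \(d_m\) constituents of multiplicity two;
\item \(r_m=(a_m-d_m)/2+p(m)-d_m=(a_m+2p(m)-3d_m)/2\) constituents of multiplicity one.
\end{itemize}
Proposition~\ref{prop:Gelfandcriteria}(i) then gives the algebra isomorphism in \eqref{eq:Commutant}, that is \(\mathscr A(D_{2m},E_m,\one)\cong\C^{r_m}\times\operatorname{Mat}_2(\C)^{d_m}\).

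For the double-coset count, Proposition~\ref{prop:Gelfandcriteria}(ii) with trivial \(\lambda\) gives \(\sum_\rho m_\rho^2\). This equals \(r_m+4d_m=(a_m+2p(m)+5d_m)/2\). For \(m=2r\) the number of blocks is \(d_m=p(r)\), which gives the final sentence.

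The only delicate step is confirming that no further coincidences occur. Specifically, one must check that:
\begin{itemize}
\item the nondegenerate characters in \eqref{eq:Ctop} are distinct from all associates;
\item the signs in \eqref{eq:Ctop} and Lemma~\ref{lem:half} use the same convention \eqref{eq:associate-sign}, so that every even-row overlap is genuinely a plus–plus coincidence rather than a plus–minus pair.
\end{itemize}
Both points follow from the restriction rules of Subsection~\ref{subsec:charformulas}: restrictions of \(\chi^{(\alpha,\beta)}\) with \(\alpha\ne\beta\) are irreducible and distinct from the degenerate associates. Both sign assertions are stated in the fixed associate convention.
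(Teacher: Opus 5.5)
Your proposal is correct and is essentially the paper's own argument. You get multiplicity-freeness of \(\Phi_k\) and \(C_m\) from \eqref{eq:PhiSupport} and \eqref{eq:Ctop} and the parity criterion from Lemma~\ref{lem:half}, then use the same count of \((a_m-d_m)/2\) nondegenerate constituents and \(p(m)\) plus associates (exactly \(d_m\) of them doubled), fed into Proposition~\ref{prop:Gelfandcriteria}(i) and the dimension count; your remarks on distinctness and the common sign convention only make explicit what the paper uses tacitly.
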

\begin{proof}
The MF assertion for \(\Phi_k\) is formula~\eqref{eq:PhiSupport}; at
\(q=0\) it gives the assertion for \(C_m\).
Lemma~\ref{lem:half} proves the parity criterion for \(E_m\).

It remains to identify the full corner algebra.
In \eqref{eq:Ctop}, write every even-row partition as twice a
partition.  The number of ordered pairs of such partitions with total
size \(2m\) is \(a_m\).  Equal pairs occur only for even \(m\), and
there are then \(d_m=p(m/2)\) of them.  Hence the number of
nondegenerate constituents is \((a_m-d_m)/2\).
Formula~\eqref{eq:half} supplies all \(p(m)\) plus associates; the
\(d_m\) even-row associates occur twice and all others once.
There are therefore \(r_m\) multiplicity-one constituents and
\(d_m\) multiplicity-two constituents.
Proposition~\ref{prop:Gelfandcriteria}(i) gives the first isomorphism
in \eqref{eq:Dcommutant}; taking dimensions gives the double-coset
formula.
\end{proof}

\begin{example}
The algebras for the first three even ranks are
\[
 \mathscr A(D_4,E_2,\one)\cong\C^3\times\operatorname{Mat}_2(\C),
 \qquad
 \mathscr A(D_6,E_3,\one)\cong\C^8,
\]
\[
 \mathscr A(D_8,E_4,\one)
 \cong\C^{12}\times\operatorname{Mat}_2(\C)^2.
\]
Thus the 120-point action of \(D_6\) on \(D_6/E_3\) has a
commutative orbital algebra of rank eight.
\end{example}

\begin{remark}[Comparison with the literature]
The \(C_m\)-frame action and the characters \(\Phi_k\) occur in the
perfect-model formulas of \cite{MZgraphs,MZperfect}; the QP geometry of
maximal orthogonal-root sets is also developed in \cite{GXroots}.
Those sources treat the frame stabilizer \(C_m\), whereas the
index-two even-special subgroup \(E_m\) is the extra source in this
paper.  Neither the parity criterion nor the matrix-block formula
\eqref{eq:Dcommutant} is stated in those cited forms; here both are
strict consequences of Lemma~\ref{lem:half} and \eqref{eq:Ctop}.
This is not asserted to classify all Gelfand pairs of \(D_{2m}\).
\end{remark}
\section{Exceptional type Coxeter group}
\label{sec:exceptional}

\subsection{Dihedral groups}

Let
\(W=I_2(m)=\langle s,t:s^2=t^2=(st)^m=1\rangle\), \(m\geq3\), and put \(r=st\).
Its two-dimensional irreducible characters are \(\rho_h\), \(1\leq h<m/2\), characterized by
\(\rho_h(r^j)=2\cos(2\pi hj/m)\) and \(\rho_h(r^js)=0\).
For odd \(m\), the linear characters are \(\one,\sgn\).
For even \(m\), write \(\lambda_{ab}\), \(a,b\in\{\pm1\}\), for the linear character with values \(a,b\) on \(s,t\).
A pair of linear characters \(\{\lambda,\mu\}\) may be \emph{split}, as two terms, or \emph{joined}, as the one allowed term
\[
 \lambda+\mu=\Ind_{\Ker(\lambda\mu)}^W(\lambda|_{\Ker(\lambda\mu)}).
\]
The two-point quotient by the nontrivial character \(\lambda\mu\) is QP.

\begin{theorem}\label{thm:dihedral}
The complete dihedral models are the following.
\begin{enumerate}[label=(\roman*)]
\item If \(m\) is odd, choose \(\Ind_{\langle s\rangle}^W\one\) or
\(\Ind_{\langle s\rangle}^W\sgn|_{\langle s\rangle}\), and append the missing linear character. There are two models.
\item If \(m\) is even, choose one of the four distinct reflection-induced characters
\(\Ind_{\langle u\rangle}^W\xi\), \(u=s,t\), \(\xi(u)=\pm1\).
Append the two missing linear characters, either split or joined. There are eight models.
\item If \(m\equiv2\pmod4\), there are eight additional models.
Put \(z=r^{m/2}\).
Choose a character induced from \(\langle z,s\rangle\) that contains all odd-indexed \(\rho_h\) and one of \(\lambda_{+-},\lambda_{-+}\).
Choose a second such character containing all even-indexed \(\rho_h\) and one of \(\lambda_{++},\lambda_{--}\).
Append the remaining two linear characters, split or joined.
\end{enumerate}
Thus the counts are \(2,8,16\), according as \(m\) is odd, divisible by four, or congruent to two modulo four.
\end{theorem}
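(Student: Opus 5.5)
The proof runs through the finite reduction of Proposition~\ref{prop:finite}, carried out by hand because the rank is two. By Proposition~\ref{prop:corank}, an allowed term has \(J=S\) or \(J=\{u\}\) with \(u\in\{s,t\}\). When \(J=\{u\}\), the trivial subgroup lies in a parabolic subgroup of corank two and is excluded, so \(H=\langle u\rangle\) and \(\sigma=\xi\) with \(\xi(u)=\pm1\).

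A direct Frobenius computation uses two facts: \(\rho_h|_{\langle u\rangle}\) is the regular character of \(\langle u\rangle\), and the linear characters are detected by their value on \(u\). It shows that \(\Ind_{\langle u\rangle}^W\xi\) contains every \(\rho_h\) exactly once. It also contains the linear characters taking the value \(\xi(u)\) on \(u\): one of them for odd \(m\), and two for even \(m\). Its degree is \(m\). For even \(m\), the four choices of \(u\) and \(\xi\) give four distinct characters. For odd \(m\), \(s\) and \(t\) are conjugate, so there are only two.

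The remaining work is the case \(J=S\), where \(H\) is an arbitrary QP subgroup of \(W\). The subgroups of \(W\) are the rotation groups \(\langle r^d\rangle\) and the dihedral groups \(\langle r^d,r^is\rangle\), up to conjugacy, with \(d\mid m\). I will first compute the induced characters. For the dihedral subgroup \(D_d=\langle r^d,s\rangle\) of index \(d\), the multiplicity of \(\rho_h\) in \(\Ind_{D_d}^W\one\) is \(1\) exactly when \((m/d)\mid h\), and \(0\) otherwise. Its other constituents are the linear characters trivial on \(D_d\). Twisting by \(\sigma\in\Lin(W)\) shifts \(h\mapsto m/2-h\) through the action of \(\sgn\) when this is relevant.

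A rotation subgroup \(\langle r^d\rangle\) with \(d\geq2\) makes \(\rho_h\) appear twice whenever \(d\mid 2h\) fails suitably, or it has degree \(2d\), which is too large. So only \(d=1\) survives, giving the index-two terms \(\lambda+\mu\) of the statement.

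The main obstacle is the QP test for the dihedral subgroups. Here I would use Proposition~\ref{prop:QPfacts}(i): the height is the path distance from the minimum coset in the Schreier graph of \(s,t\) on \(W/H\). This graph is a path of \(d\) vertices with loops at its ends. I would then check QP1 directly. A reflection \(r^js\) sends a coset at height \(x\) to one at height determined by the reflection of the path, and QP1 requires that distinct cosets never share a height under such a move. My expectation is that this fails unless \(d\leq2\), or \(d=m\) (the parabolic case, already treated), or \(d=m/2\) with \(m/2\) odd. In the last case \(D_{m/2}=\langle z,s\rangle\) is generated by the two orthogonal reflections \(s\) and \(zs\), and its \(m/2\)-point quotient should be checked to satisfy QP1 and QP2, since it is the rank-two analogue of \(K\) in Table~\ref{tab:B3terms}. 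If the path picture turns out to be too coarse, I would fall back on the QP2 test at the endpoint of the path, which rules out intermediate \(d\) most cleanly.

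With the list in hand, \(\Ind_{\langle z,s\rangle}^W\sigma\) has support either all even-indexed \(\rho_h\) plus one of \(\lambda_{++},\lambda_{--}\), or, after twisting by a character nontrivial on \(z\), all odd-indexed \(\rho_h\) plus one of \(\lambda_{+-},\lambda_{-+}\). The exact covers are then enumerated as follows.
\begin{enumerate}[label=(\alph*)]
\item The \(\rho_h\) must be covered either by a single reflection-induced term, or, when \(m\equiv2\pmod4\), by one odd-index and one even-index \(\langle z,s\rangle\)-term.
\item The linear characters left over are then covered by the possible splittings and joinings.
\end{enumerate}
Counting gives \(2\), \(4\cdot2=8\), and \(8+2\cdot2\cdot2=16\).
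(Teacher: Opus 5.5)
\paragraph{A step in your proposal fails.}
Your character computations are right, but the QP test you use to cut the dihedral subgroups down to \(d\in\{1,2,m/2,m\}\) would not do that.

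For every \(d\mid m\), the subgroup \(\langle r^d,s\rangle\) is the preimage of the standard parabolic \(\langle s\rangle\) under the Coxeter quotient \(I_2(m)\to I_2(d)\). Your Schreier path with a loop at each end is exactly \(I_2(d)/\langle s\rangle\), with the same heights. Every reflection of \(I_2(m)\) acts through a reflection of \(I_2(d)\). Hence QP1 and QP2 hold for every \(d\), and your endpoint QP2 fallback finds nothing either.

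This is the same argument the paper uses for \(\langle z,s\rangle\). It shows that \(\langle z,s\rangle\) is QP for every even \(m\). The requirement that \(m/2\) be odd is a character condition, not a QP condition.

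So after your QP test the ``list in hand'' is still every dihedral subgroup, and step (a) has no support.

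Likewise, ``only \(d=1\) survives'' is false for rotation subgroups. For even \(m\), \(\Ind_{\langle r^2\rangle}^W\one\) is the sum of all four linear characters, which is MF. Moreover \(\langle r^2\rangle\) is QP, as the preimage of the trivial subgroup of \(I_2(2)\). The correct statement is that every \(\rho_h\)-multiplicity of a rotation-subgroup term is even, so an MF such term contains no \(\rho_h\).

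\paragraph{The missing idea.}
The exclusions must come from supports, and your multiplicity formula already gives them. Put \(L=m/d\).
\begin{itemize}
\item The untwisted term contains \(\rho_h\) if and only if \(L\mid h\).
\item A twist with \(\sigma(r)^d=-1\) is possible only for even \(m\) and odd \(d\), hence even \(L\). It contains \(\rho_h\) if and only if \(h\equiv L/2\pmod L\).
\end{itemize}

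Covering \(\rho_1\) therefore forces one of two cases:
\begin{itemize}
\item \(L=1\), a reflection term, which covers every \(\rho_h\); or
\item \(L=2\) twisted, which needs \(d=m/2\) odd, i.e.\ \(m\equiv2\pmod4\), and covers exactly the odd indices.
\end{itemize}
In the second case, covering \(\rho_2\) forces the untwisted \(L=2\) term. Indeed \(L=1\) overlaps, and a twisted \(L=4\) term would need \(4\mid m\).

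Every term containing some \(\rho_h\) also contains a linear character, so the four-linear term never fits into a cover. The leftover linear characters are therefore covered split or joined.

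This is the paper's proof. There the QP property enters only for existence: reflection subgroups are parabolic, \(\langle z,s\rangle\) is a Coxeter-quotient preimage, and each joined pair is a two-point QP quotient.
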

\begin{proof}
Every subgroup is cyclic in \(\langle r\rangle\), or has the form
\(H=\langle r^d,r^as\rangle\), \(d\mid m\).
Put \(L=m/d\).
For a linear character \(\sigma\) of \(W\), direct averaging gives
\[
 \langle\Ind_H^W(\sigma|_H),\rho_h\rangle=
 \begin{cases}
 1,&\sigma(r)^d=1,\ h\equiv0\pmod L,\\
 1,&\sigma(r)^d=-1,\ h\equiv L/2\pmod L,\\
 0,&\text{otherwise}.
 \end{cases}
\]
The second line requires even \(L\).
For a cyclic rotation subgroup the nonzero two-dimensional multiplicity is twice this value, so an MF cyclic source cannot cover a two-dimensional character.
The proper-parabolic sources are just reflection subgroups and are already included.

Covering \(\rho_1\) forces \(L=1\), or \(L=2\) with \(\sigma(r)^d=-1\).
For odd \(m\) only the reflection case is possible.
For \(4\mid m\), the \(L=2\) case has even \(d\) and is impossible.
This gives (i)--(ii).
For \(m\equiv2\pmod4\), the additional \(L=2\) case is possible and covers exactly the odd indices.
Covering \(\rho_2\) then requires the even \(L=2\) part: \(L=1\) overlaps, and \(L=4\) cannot divide \(m\).
Each parity part contains precisely one linear character of the indicated parity.
There are two choices for each part and two ways to finish, giving eight additional families.
No further term can be added because every irreducible has already been covered.

The reflection subgroups are standard parabolic.
The order-four subgroups in (iii) are preimages of reflection subgroups under the Coxeter quotient
\(I_2(m)\to I_2(m/2)\).
They are QP by Proposition~\ref{prop:QPfacts}.
Their inducing characters are restrictions of the indicated \(\lambda_{ab}\).
All linear pairs are two-point QP quotients, proving existence.
\end{proof}

\begin{corollary}[Dihedral Gelfand pairs]
\label{cor:dihedralGelfand}
For every reflection \(u\in I_2(m)\), the pair
\((I_2(m),\langle u\rangle)\) is a Gelfand pair, and
\[
 \dim\mathscr A(I_2(m),\langle u\rangle,\one)
 =\begin{cases}(m+1)/2,&m\text{ odd},\\m/2+1,&m\text{ even}.
 \end{cases}
\]
If \(m\equiv2\pmod4\), put \(z=(st)^{m/2}\).  Then
\((I_2(m),\langle z,u\rangle)\) is a Gelfand pair of rank
\((m+2)/4\).
Finally, every index-two subgroup \(K\) of \(I_2(m)\) gives a
rank-two Gelfand pair \((I_2(m),K)\).
\end{corollary}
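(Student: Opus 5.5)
The plan is to read off every permutation character from the multiplicity formula established in the proof of Theorem~\ref{thm:dihedral}, taking \(\sigma=\one\). Each character will turn out to be multiplicity-free. Proposition~\ref{prop:Gelfandcriteria}(i) then gives the Gelfand property, and it identifies \(\dim\mathscr A\) with the number of distinct constituents; equivalently, by Proposition~\ref{prop:Gelfandcriteria}(ii), with the number of double cosets. So the whole corollary reduces to counting constituents, and I will check each count against the index \([W:H]\) to confirm that no multiplicities exceed one.

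For \(H=\langle u\rangle\), write \(u=r^as\); this is the case \(d=m\), \(L=1\). With \(\sigma=\one\), the formula gives multiplicity one for every \(\rho_h\), \(1\leq h<m/2\). For linear characters, Frobenius reciprocity gives \(\langle\Ind_{\langle u\rangle}^W\one,\lambda\rangle=1\) exactly when \(\lambda(u)=1\), and \(0\) otherwise. If \(m\) is odd, only \(\one\) qualifies, which gives \((m-1)/2+1=(m+1)/2\) constituents. If \(m\) is even, exactly two of the four \(\lambda_{ab}\) are trivial on \(u\), which gives \((m/2-1)+2=m/2+1\) constituents. In both cases the degrees sum to \(m=[W:\langle u\rangle]\), which confirms multiplicity-freeness.

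For \(H=\langle z,u\rangle\) with \(m\equiv2\pmod4\), we have \(d=m/2\) and \(L=2\). With \(\sigma=\one\) we have \(\sigma(r)^d=1\), so the formula selects exactly the even indices \(h\) with \(1\leq h<m/2\). Since \(m/2\) is odd, there are \((m-2)/4\) of them. For a linear character \(\lambda_{ab}\) we have \(\lambda(r)=ab\) and \(\lambda(z)=(ab)^{m/2}=ab\). Triviality on \(H\) therefore forces \(ab=1\) and \(\lambda(u)=1\), and only \(\one\) survives. The rank is \((m-2)/4+1=(m+2)/4\), and the degree check \(2(m-2)/4+1=m/2=[W:H]\) holds.

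Finally, if \(K\) has index two, then \(\Ind_K^W\one=\one+\lambda\), where \(\lambda\) is the nontrivial linear character with kernel \(K\). This is multiplicity-free with two constituents, so \((I_2(m),K)\) is a rank-two Gelfand pair. The only point requiring care is the linear-character bookkeeping in the \(\langle z,u\rangle\) case, namely the parity of \(m/2\); there the degree count serves as the safeguard.
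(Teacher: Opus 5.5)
Your proposal is correct and takes essentially the paper's route. The paper likewise reads the constituents off the dihedral multiplicity computation, phrased there as twisting back the MF model terms whose inducing characters are restrictions of linear characters of \(W\); it counts the linear and two-dimensional constituents and concludes via Proposition~\ref{prop:Gelfandcriteria}(i) and \(\Ind_K^W\one=\one+\nu\). Your direct specialization \(\sigma=\one\), with Frobenius reciprocity for the linear constituents and the index checks, is a slightly more explicit version of the same count.
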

\begin{proof}
The inducing characters in Theorem~\ref{thm:dihedral} are restrictions
of linear characters of \(W\).  Twisting back therefore turns each MF
term into the corresponding MF permutation character.
A reflection-induced character has one linear constituent and all
\((m-1)/2\) two-dimensional constituents when \(m\) is odd, and two
linear constituents and all \(m/2-1\) two-dimensional constituents
when \(m\) is even.
For \(m\equiv2\pmod4\), an order-four source has one linear
constituent and \((m-2)/4\) two-dimensional constituents.
The index-two assertion follows from
\(\Ind_K^W\one=\one+\nu\), where \(\Ker\nu=K\).
Proposition~\ref{prop:Gelfandcriteria}(i) now gives all the ranks.
\end{proof}

\begin{remark}
These dihedral pairs are elementary instances of the finite
double-coset criterion and are also implicit in the dihedral
perfect-model analysis of \cite{MZperfect}.
The contribution of Theorem~\ref{thm:dihedral} is the complete
exact-cover assembly, including the extra order-four sources when
\(m\equiv2\pmod4\), rather than the discovery of the reflection
Gelfand pairs themselves.
\end{remark}

\begin{example}[An additional \(I_2(6)\) model]
With \(z=(st)^3\), one additional family has supports
\[
 \{\rho_1+\lambda_{+-},\ \rho_2+\lambda_{++},\
          \lambda_{-+}+\lambda_{--}\}.
\]
The first two terms are induced from \(\langle z,s\rangle\), using \(\lambda_{+-}\) and \(\lambda_{++}\).
The last term may instead be split into its two linear characters.
The degrees are \(3,3,2\), or \(3,3,1,1\).
\end{example}

\subsection{The icosahedral group}

Let \(W=H_3\), with simple generators \(s,t,u\) and
\(m_{st}=5,m_{tu}=3,m_{su}=2\).
Put \(\om=\sgn_W\) and define
\[
 A=\Ind_{\langle s,t\rangle}^W\one,\qquad
 B=\Ind_{\langle t,u\rangle}^W\one,\qquad
 C=\Ind_{\langle s,u\rangle}^W\alpha,\quad\alpha(s)=1,\ \alpha(u)=-1 .
\]
The two choices of the mixed character give the same \(C\).

\begin{theorem}\label{thm:H3}
The four models of \(H_3\) are
\[
 \{A,\om B\},\qquad \{\om A,B\},\qquad
 \{C,\one,\om\},\qquad \{C,\one+\om\}.
\]
The joined linear term is \(\Ind_{\Ker\om}^W\one\).
\end{theorem}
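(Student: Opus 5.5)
The proof will follow the pattern of the dihedral and \(B_3\) cases: first fix the character data of \(H_3\), then reduce the list of candidate terms, and finally solve the exact-cover problem. \(H_3\cong A_5\times C_2\) has ten irreducible characters, \(\chi\) and \(\om\chi\) for \(\chi\in\Irr(A_5)\), with degrees \(1,3,3',4,5\) each taken twice. Hence \(\Gamma_W(1)=2(1+3+3+4+5)=32\). By Proposition~\ref{prop:corank}, the only possible sources are \(J=S\) and the three rank-two parabolics. These are \(\langle s,t\rangle\cong I_2(5)\) of order \(10\), \(\langle t,u\rangle\cong S_3\) of order \(6\), and \(\langle s,u\rangle\cong C_2^2\).

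\textbf{Step 1: the parabolic terms.} I would compute the three parabolic terms directly, using Frobenius reciprocity and the known character table of \(H_3\). From this I expect three facts.
\begin{itemize}
\item \(A\) has degree \(12\) and equals \(1+\) one three-dimensional character \(+\) a four and a five with suitable signs.
\item \(B\) has degree \(20\) and covers the \(\om\)-complement of the support of \(A\).
\item \(C\) has degree \(30\) and contains every non-linear irreducible once. This is plausible because \(\langle s,u\rangle\) with the mixed character behaves like a ``Gelfand-model'' source. It is also why the two mixed characters give the same induced character: they are conjugate under the longest element \(w_0=-1\), which is central, or under the diagram symmetry of the Frobenius data.
\end{itemize}
The remaining twists of these inductions would be tabulated and the non-MF ones discarded. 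For example, \(\Ind_{\langle s,u\rangle}^W\one\) of degree \(30\) cannot be MF together with the required content. Smaller subgroups of these parabolics lie in corank-two parabolics or give degrees above \(32\) with repetitions, and are excluded in the same way.

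\textbf{Step 2: full-group sources.} Here the candidate \(H\leq W\) need not be parabolic. Proposition~\ref{prop:Gelfandcriteria}(iv) makes \((W,H)\) a Gelfand pair, and the degree bound \([W:H]\leq32\) forces \(|H|\geq4\). I would run through the subgroups of \(A_5\times C_2\) of order at least \(4\) up to conjugacy, which is a short list: \(A_5\), \(A_4\), \(D_{10}\), \(S_3\), \(C_5\), \(V_4\), and their products or graphs with \(\langle z\rangle\), where \(z=w_0\).
\begin{itemize}
\item For each such \(H\) and each \(\sigma\in\{\one,\om\}\), compute the multiplicity vector as in Proposition~\ref{prop:finite}.
\item Apply the QP test: Proposition~\ref{prop:QPfacts}(iii) requires odd QP subgroups to contain a simple reflection, and even ones to be generated by rotations, which kills most of the list.
\item Check QP1 and QP2 at every basepoint for the survivors, recording the data as in \cite{ZhangData}.
\end{itemize}
I expect the survivors to be the parabolic ones, the index-two \(\Ker\om\) giving \(1+\om\), the trivial term \(\one\), \(\om\), and possibly \(\langle z\rangle\times\)rank-two parabolics. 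The latter coincide in character with twists already listed, or fail MF.

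\textbf{Step 3: exact cover.} With the column list fixed, the linear characters \(\one,\om\) are covered only by \(A,\om A,B,\om B\), by the \(C\)-complement singletons, or by the joined term. Each of \(A\) and \(\om B\) has a degree-\(12\)/\(20\) support, and these must be complementary, which gives the first two models. If neither \(A\) nor \(B\) type term is used, then \(C\) must cover all \(30\) non-linear degrees. The two linear characters are then appended either split or joined.

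\textbf{Main obstacle.} The hard step is the completeness of Step 2, namely excluding nonparabolic QP subgroups such as \(\langle z\rangle\times\langle s,u\rangle\) or the \(A_4\)-type subgroups. For these I would rely on the finite basepoint enumeration of Proposition~\ref{prop:finite} rather than a conceptual argument.
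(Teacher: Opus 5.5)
Your overall route is the paper's. You reduce by Proposition~\ref{prop:corank} to \(J=S\) and the three rank-two parabolics, enumerate the MF QP columns as in Proposition~\ref{prop:finite}, and solve the exact cover. The paper writes \(A=1^++3^-+(3')^-+5^+\), \(B=A+4^++4^-\), \(C=\Gamma_W-1^+-1^-\) in \(C_2\times A_5\) notation, then gets thirteen columns and four covers. However, several of your concrete claims about the outcome are false, and Step~3 is built on them.

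\begin{itemize}
\item Your predicted \(A\) (a \(1\), one \(3\), a \(4\), a \(5\)) has degree \(13\), not \(12\). The correct support has both three-dimensional characters and no \(4\).
\item The two mixed characters on \(\langle s,u\rangle\) are not exchanged by conjugation by the central \(w_0\), since that conjugation is trivial, and \(H_3\) has no diagram symmetry. They differ by \(\om|_{\langle s,u\rangle}\), so the two inductions differ by the twist \(\om\); they agree because \(C\) is \(\om\)-stable.
\item In Step~1, not every proper subgroup of a parabolic drops out. \(C_5=\langle st\rangle=\langle s,t\rangle^\circ\) is QP by Proposition~\ref{prop:QPfacts}(iii), has index \(24\), and gives the MF column \(A+\om A\).
\end{itemize}

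Step~2 also retains more than you predict. The rotation criterion removes only reflection-free odd subgroups such as \(\langle z\rangle\times C_5\) (with \(z=w_0\)); every subgroup of \(\Ker\om\) passes it.
\begin{itemize}
\item \(\langle z\rangle\times\langle s,t\rangle\) has a six-point coset action with heights \(0,1,\dots,5\), all distinct, so QP1 and QP2 hold trivially. It gives the MF columns \(1^++5^+\) and \(1^-+5^-\), which are neither twists of parabolic terms nor non-MF.
\item Its even part gives \(1^++1^-+5^++5^-\).
\item A rotational \(A_4\), which you expected to exclude, is QP at a suitable basepoint and gives \(1^++1^-+4^++4^-\).
\end{itemize}
By contrast, \(\langle z\rangle\times A_4\) and \(\langle z\rangle\times\langle t,u\rangle\) fail QP1 at every basepoint, and \(\langle z\rangle\times\langle s,u\rangle\) is not MF. These extra columns, together with \(\one,\om,\one+\om,A,\om A,A+\om A,B,\om B,C\), are consistent with the paper's count of thirteen.

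So the premise of Step~3 is false: the linear characters do not occur only in \(A,\om A,B,\om B\), the singletons and the joined term. Your claim that \(A\) and \(\om B\) must be complementary is also asserted rather than derived. The cover argument has to be run on the correct list, and it is short if you key on \(4^+\), which lies only in \(B\), \(\om B\), \(C\) and \(1^++1^-+4^++4^-\).
\begin{itemize}
\item Choosing \(1^++1^-+4^++4^-\) leaves \(3^+\) uncoverable. The columns containing \(3^+\) are \(\om A\), \(A+\om A\) and \(\om B\), which contain \(1^-\), and \(C\), which contains \(4^+\).
\item Choosing \(C\) leaves exactly \(1^+,1^-\), split or joined.
\item Choosing \(B\) forces \(\om A\), the only column containing \(3^+\) that is disjoint from \(B\). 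Symmetrically, choosing \(\om B\) forces \(A\).
\end{itemize}
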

\begin{proof}
Use \(H_3\cong C_2\times A_5\), and label the five \(A_5\)-characters by
\(1,3,3',4,5\).
A superscript \(+\) or \(-\) records the action of the central involution.
Then
\[
 A=1^++3^-+(3')^-+5^+,\qquad
 B=A+4^++4^-,\qquad C=\Gamma_W-1^+-1^- .
\]
These identities follow by averaging on the three stated parabolic subgroups.
They prove the four asserted exact covers and give degrees
\(12+20=32\) and \(30+1+1=32\).
The two-point quotient proves the QP realization of the joined term.

For completeness, apply Proposition~\ref{prop:finite} to \(W\) and its three maximal standard parabolic subgroups.
There are thirteen distinct QP MF columns, with exactly four exact covers on the ten irreducible rows.
Their supports are the four families above.
The complete subgroup enumeration, character columns, and QP basepoint tests are recorded in \cite{ZhangData}.
\end{proof}

\begin{corollary}[Gelfand pairs and a genuinely twisted term in \(H_3\)]
\label{cor:H3Gelfand}
The parabolic pairs
\[
 (H_3,\langle s,t\rangle),\qquad
 (H_3,\langle t,u\rangle)
\]
are Gelfand pairs of ranks four and six, respectively, and
\((H_3,\Ker\om)\) is a rank-two Gelfand pair.
The triple
\[
 (H_3,\langle s,u\rangle,\alpha),
 \qquad \alpha(s)=1,\quad\alpha(u)=-1,
\]
is a linear Gelfand triple of rank eight, whereas the ordinary pair
\((H_3,\langle s,u\rangle)\) is not a Gelfand pair.
\end{corollary}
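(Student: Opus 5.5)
We read the four assertions of Corollary~\ref{cor:H3Gelfand} off the decompositions already established in the proof of Theorem~\ref{thm:H3}, and then apply Proposition~\ref{prop:Gelfandcriteria}(i). The identities
\[
 A=1^++3^-+(3')^-+5^+,\qquad B=A+4^++4^-,\qquad C=\Gamma_W-1^+-1^-
\]
are expressed in terms of the \(C_2\times A_5\) labelling, and each displays an MF character. The number of distinct constituents is then the rank, equivalently \(\dim\mathscr A\). This gives rank four for \((H_3,\langle s,t\rangle)\) and rank six for \((H_3,\langle t,u\rangle)\). Since \(|\Irr(H_3)|=10\), the character \(C\) has \(10-2=8\) constituents, so the triple \((H_3,\langle s,u\rangle,\alpha)\) is a linear Gelfand triple of rank eight. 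For \(\Ker\om\), the index-two formula gives \(\Ind_{\Ker\om}^W\one=\one+\om\), which is MF with two constituents.

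The remaining claim is that the ordinary pair \((H_3,\langle s,u\rangle)\) is not Gelfand. I would compute \(\Ind_{\langle s,u\rangle}^W\one\) and exhibit a repeated constituent. Note that \(\langle s,u\rangle\cong C_2\times C_2\) has index \(30\), so this permutation character has degree \(30\).

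The first approach is to use induction in stages together with the index-two formula. Let \(K=\Ker(\alpha)=\langle s\rangle\). Then
\[
 \Ind_{\langle s\rangle}^W\one=\Ind_{\langle s,u\rangle}^W\one+C .
\]
Since \(\langle s\rangle\) and \(\langle u\rangle\) are conjugate (all reflections of \(H_3\) are conjugate, because the odd bonds \(m_{st}=5\) and \(m_{tu}=3\) link them), the character \(\Ind_{\langle s\rangle}^W\one\) equals \(\Ind_{\langle u\rangle}^W\one\). For a reflection \(r\), the multiplicity of \(\rho\) in \(\Ind_{\langle r\rangle}^W\one\) is \((\rho(1)+\rho(r))/2\). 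I would evaluate this from the character table of \(C_2\times A_5\). In that group a reflection is \(-1\) times an involution of \(A_5\), and \(A_5\) involutions have character values \(1,-1,-1,0,1\) on \(1,3,3',4,5\).

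From these values the multiplicities are as follows: \(5^-\) appears \((5-1)/2=2\) times in \(\Ind_{\langle r\rangle}^W\one\), while \(C\) contains it once. Hence \(\Ind_{\langle s,u\rangle}^W\one\) contains \(5^-\) exactly once. One needs a different witness, so I would instead compute the permutation character directly: its multiplicity at \(\rho\) is the average of \(\rho\) over the four elements \(1,s,u,su\), where \(su\) is \(-1\) times an element of \(A_5\) of order \(2\) or \(1\)…

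The main obstacle is exactly this: pinning down the sign convention for which \(\pm\) labels correspond to which reflections, and the class of \(su\). Here \(su\) is a rotation of order two, so it is an involution lying in \(A_5\) itself, with sign \(+\). Averaging over \(1,s,u,su\) gives
\[
 m(\rho)=\frac{\rho(1)+2\rho(r)+\rho(su)}{4}.
\]
For \(\rho=4^+\) this is \((4+0+0)/4=1\). For \(5^+\) it is \((5-2+1)/4=1\), and for \(5^-\) it is \((5+2+1)/4=2\). Thus \(5^-\) has multiplicity two, which shows the pair is not Gelfand. As a consistency check, the multiplicities should sum in degree to \(30\) and satisfy the relation above with the reflection-induced character.
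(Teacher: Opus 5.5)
Your proof of the first three ranks and of the twisted rank eight matches the paper. For the last assertion you also use the paper's method, averaging over $\{1,s,u,su\}$. However, your final evaluation reverses the signs at the reflections, so the witness you name is wrong.

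In the labelling you quote, the superscript records the action of $z=-I$. This is forced, because $1^+$ must be the trivial constituent of the permutation character $A$. A reflection is $r=z\cdot(-r)$, where $-r$ is an involution of $A_5$, so $\rho^{\pm}(r)=\pm\rho(-r)$. Hence $5^+(r)=1$ and $5^-(r)=-1$, while $5^{\pm}(su)=1$. Your formula then gives $m(5^+)=(5+2+1)/4=2$ and $m(5^-)=(5-2+1)/4=1$. So $5^-$ occurs only once, exactly as your first route had correctly found before you abandoned it. The repeated constituent is $5^+$, which is the paper's witness. As written, your final claim that $5^-$ has multiplicity two is false and contradicts your own earlier computation.

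Your first route already finishes the proof if you test $5^+$ instead of $5^-$. By the index-two formula, $\Ind_{\langle s\rangle}^W\one=\Ind_{\langle s,u\rangle}^W\one+C$. The left side contains $5^+$ with multiplicity $(5+1)/2=3$, and $C=\Gamma_W-1^+-1^-$ contains it once. Therefore $\Ind_{\langle s,u\rangle}^W\one$ contains $5^+$ twice.

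The full decomposition is $1^++3^-+(3')^-+4^++4^-+2\cdot 5^++5^-$, which has degree $30$. Since $z$ is central and $z\notin\langle s,u\rangle$, the permutation character must vanish at $z$. This decomposition does vanish there. Doubling $5^-$ instead of $5^+$ would give the value $-10$ at $z$. The consistency check you proposed at the end, comparing with the reflection-induced character, would have caught the error.
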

\begin{proof}
The decompositions of \(A,B,C\) in the preceding proof give the first
three ranks and the twisted rank.
For the last assertion, identify \(H_3\cong C_2\times A_5\) as above.
The subgroup \(\langle s,u\rangle\) projects to a Klein four group in
\(A_5\), and direct averaging on its three involutions gives
\[
 \left\langle\Ind_{\langle s,u\rangle}^{H_3}\one,5^+\right\rangle=2.
\]
Thus its untwisted corner algebra has a noncommutative
\(\operatorname{Mat}_2(\C)\) block.
\end{proof}

\begin{remark}
The MF identities are already implicit in the perfect-model
classification \cite{MZperfect}.  The corollary isolates their
Gelfand-pair content and records why the mixed character in \(C\)
cannot be replaced by the trivial character; this is a genuine twisted
Gelfand phenomenon.
\end{remark}

\subsection{The remaining exceptional types}

\begin{theorem}[Exceptional obstruction]\label{thm:exceptional}
For \(W\) of type \(H_4,F_4,E_6,E_7,E_8\), there is no family
\[
 \sum_i\Ind_{H_i}^W(\sigma_i|_{H_i})=\Gamma_W,
 \qquad H_i\leq W_{J_i},\quad \sigma_i\in\Lin(W_{J_i}),
\]
even without a QP requirement on \(H_i\).
\end{theorem}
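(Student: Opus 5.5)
The plan is to follow the finite reduction of Proposition~\ref{prop:finite}, but with the QP filter removed. By Proposition~\ref{prop:corank}, every term has \(J=S\) or \(|S\setminus J|=1\). So for each of the five groups the candidate terms are \(\Ind_H^W(\sigma|_H)\) with \(H\) running over subgroups of \(W\) or of a maximal standard parabolic \(W_J\), up to \(W_J\)-conjugacy, and \(\sigma\in\Lin(W_J)\). For each candidate we compute the multiplicity vector \(m_\rho(J,H,\sigma)\) and keep it only if it is a \(\{0,1\}\)-vector. The statement then says that the exact-cover problem on the surviving columns has no solution. The two preliminary filters of Proposition~\ref{prop:finite} remain valid without QP: the degree bound \([W:H]\leq\Gamma_W(1)\), and the Gelfand-pair condition for \((W_J,H)\) from Proposition~\ref{prop:Gelfandcriteria}(iv). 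We will use both to prune the subgroup enumeration before any characters are computed.

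Whenever possible, we will replace the full exact-cover search by a single-row obstruction. The idea is to find, for each group, one irreducible character \(\rho_0\) that lies in the support of no MF column at all. One natural choice is a character of large degree, or a character that is ``deep'' in the Lusztig family sense. Another is a constituent that appears with multiplicity at least two in \(\Ind_{H}^{W}\sigma\) for every candidate \(H\) small enough to reach it. A \(\rho_0\) of this kind immediately rules out every model.

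To find \(\rho_0\), we will exploit the following positivity principle from the paper: if \(\Ind_H^L\tau\) already has a repeated constituent, then so does every further induction. We will therefore look for \(\rho_0\) among the constituents of induced characters from small subgroups, for instance \(\Ind_{\langle u\rangle}^W\), or inductions of the characters \(\chi\) of maximal parabolics in which \(\rho_0\) appears. Concretely, we compute \(\langle\Ind_{W_J}^W\psi,\rho_0\rangle\) for all \(\psi\in\Irr(W_J)\). If \(\rho_0\) can only occur in inductions where some \(\psi\) is hit at least twice, or where it occurs together with another fixed constituent with multiplicity at least two, no MF term contains it. Natural first guesses are the \(4480_y\) of \(E_8\), the \(7168_w\) of \(E_8\), the \(512_a\) of \(E_7\), the \(90_s\) of \(E_6\), \(12\) of \(F_4\), and the largest characters of \(H_4\).

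If no single-row obstruction exists for some group, we fall back on the archived exhaustive computation. That means running the subgroup enumeration by tables of marks \cite{GAP,TomLib}, listing the MF columns, and solving the exact cover, for example by a parity or degree argument on a few rows.

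We expect the main obstacle to be \(E_8\). Its maximal parabolics \(E_7\times A_1\), \(D_7\), \(A_7\) and so on have enormous subgroup lattices, and the degree bound \(\Gamma_{E_8}(1)\) is huge. The most efficient first attempt there is to restrict the enumeration to subgroups \(H\) for which \((W_J,H)\) is a Gelfand pair. These can be organized through the Gelfand pairs of \(W_J\), using \cite{APVM} and the classical-type analysis above, because \(W_J\) is mostly of type \(A\), \(D\), or \(E_7\times A_1\). After that, one shows that the candidate \(\rho_0\) is missed.
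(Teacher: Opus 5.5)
Your framework is the one the paper uses for \(H_4,F_4,E_6,E_7\): corank reduction, enumeration over \(W\) and its maximal parabolics, MF columns, then an absent row or an exact-cover argument. The \(E_8\) step, however, has a genuine gap.

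Neither tool you invoke gives a complete candidate list once QP is dropped.
\begin{itemize}
\item \cite{APVM} classifies commutative parabolic Hecke algebras, so it covers only standard parabolic \(H\).
\item The type \(A\), \(B\), \(D\) reductions in this paper all start from Lu's classification of QP subgroups, which is exactly the hypothesis this theorem removes. They say nothing about arbitrary Gelfand pairs \((W_J,H)\) for \(W_J\) of type \(E_7\times A_1\), \(D_7\), \(A_7\), \(D_5\times A_2\), and so on.
\item For the source \(J=S\) you give no method for finding the subgroups \(H\leq W(E_8)\) with \([W:H]\leq 199952\) and MF permutation character.
\end{itemize}
The single-row idea does not rescue this. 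Restricting \(\rho_0\) to a maximal parabolic excludes \(\rho_0\) only if \(\operatorname{Res}_{W_J}\rho_0\) has no constituent of multiplicity one, which one should not expect. It also says nothing about full-group sources. The paper identifies no absent row for \(E_8\).

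The missing idea is to evaluate the model equation at the central element \(z=-I\).
\begin{itemize}
\item A proper parabolic subgroup fixes a nonzero vector, so it cannot contain \(z\). Since \(z\) is central, every term from a proper parabolic vanishes at \(z\).
\item A full-group term \(\Ind_H^W(\sigma|_H)\) contains \(\sigma\) exactly once. Since \(W(E_8)\) has only two linear characters, an exact cover has at most two such terms, and both linear characters equal \(1\) at \(z\).
\item Hence \(\Gamma_W(z)=15120\) must be one index, or a sum of two indices, of subgroups containing \(z\) with MF permutation character.
\item Passing to \(W/\langle z\rangle\cong O_8^+(2){:}2\) and using the table of marks of \(O_8^+(2)\) leaves only \(15120\) and \(7560+7560\).
\item Exact scalar products then show that none of the relevant permutation characters is MF.
\end{itemize}
This disposes of every maximal-parabolic source of \(E_8\) without enumerating any of its subgroups.

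Smaller corrections:
\begin{itemize}
\item In \(E_6\), every row lies in some MF column, so your guess \(90_s\) fails. The paper needs an explicit exact-cover contradiction through rows \(20,5,12,7\).
\item For \(F_4\), the absent row is the degree-\(16\) character.
\item For \(E_7\), the enumeration is made feasible through \(W=\langle z\rangle\times\Sp_6(2)\). A subgroup not containing \(z\) is the graph of some \(\varphi:L\to C_2\), which forces \(\Ind_L^{\Sp_6(2)}\one\) to be MF.
\end{itemize}
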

\begin{proof}
We use Proposition~\ref{prop:finite}, with the QP test omitted.
This is essential for the stronger assertion in the statement: the input consists of arbitrary subgroups of the full group and every maximal standard parabolic subgroup.
The exact group and character calculations described below are recorded, together with ordinary GAP reconstruction sources, in \cite{ZhangData}.

For \(H_4,F_4,E_6\), the complete full-group enumerations have respectively \(181,246,350\) subgroup conjugacy classes.
All maximal parabolic subgroups are enumerated in the same way.
For \(E_7\), write
\[
 W=\langle z\rangle\times K,\qquad z=-I,\quad K\cong\Sp_6(2).
\]
If \(L\leq K\) is the projection of \(H\leq W\), then either
\[
 H=\langle z\rangle\times L,
 \quad\text{or}\quad
 H=\{(\varphi(l),l):l\in L\},\qquad\varphi\in\Hom(L,C_2).
\]
Indeed, the kernel of that projection is either \(\langle z\rangle\) or trivial.
In the graph case, the two central eigenspaces of the permutation representation have characters
\[
 \one\boxtimes\Ind_L^K\one,
 \qquad\sgn_{C_2}\boxtimes\Ind_L^K\varphi.
\]
Consequently \(\Ind_L^K\one\) must be MF.
The complete table of marks of \(\Sp_6(2)\) has \(1369\) subgroup classes, of which \(31\) have this property.
All binary homomorphisms from these subgroups give \(86\) full-group representatives by the displayed construction, \(84\) of which pass the full-group MF test.
This is an exhaustive relevant list, although it is not the list of all subgroup classes of \(E_7\).
The seven maximal standard parabolic subgroups are treated by complete subgroup enumeration.

After all ambient linear twists and parabolic sources are included and equal characters are identified, the results are
\[
\begin{array}{c|r|r|l}
 W&\text{all MF columns}&\text{QP MF columns}&
   \text{row absent from all MF columns}\\ \hline
 H_4&43&11&\text{the degree-48 character}\\
 F_4&168&67&\text{the degree-16 character}\\
 E_6&66&17&\text{none}\\
 E_7&140&27&\text{both degree-512 characters}.
\end{array}
\]
The degree-48 and degree-16 characters in their respective groups are unique.
The absent rows prove nonexistence for \(H_4,F_4,E_7\), even in the enlarged class.
For comparison, the QP columns of \(E_6\) omit its unique degree-10 character.

For the stronger \(E_6\) assertion, use the fixed ordering
\(\rho_1,\ldots,\rho_{25}\) of the full character table in \cite{ZhangData}, with degrees
\[
\begin{gathered}
1,1,6,6,10,15,15,15,15,20,20,20,24,24,30,30,\\
60,60,60,64,64,80,81,81,90.
\end{gathered}
\]
Among its \(66\) columns, the only two containing row \(20\) have supports
\[
\begin{aligned}
 A&=\{3,6,10,15,20,24\},\\
 B&=\{3,4,6,8,10,11,15,16,20,21,23,24\}.
\end{aligned}
\]
Every column containing row \(5\) intersects \(B\), so a cover cannot use \(B\).
After choosing \(A\), the only disjoint column containing row \(12\) is
\(C=\{2,8,12,13,19\}\).
Every column containing row \(7\) intersects \(A\cup C\).
Both choices for row \(20\) are therefore impossible.
These four support tests give an explicit exact-cover obstruction; the full character values in the archive, rather than the repeated degrees in the display, specify the row labels.

It remains to prove the \(E_8\) assertion.
Put \(z=-I\) and let \(\chi\) be an allowed term with the QP condition still omitted.
A proper parabolic subgroup fixes a nonzero vector, so it does not contain \(z\); such a term has \(\chi(z)=0\).
For a full-group source, both linear characters of \(W(E_8)\) have value one at \(z\), and centrality gives
\begin{equation}
 \chi(z)=
 \begin{cases}
 [W:H],&z\in H,\\
 0,&z\notin H.
 \end{cases}
 \label{eq:E8centralterm}
\end{equation}
A full-group term induced using \(\sigma\) contains \(\sigma\) once.
Since \(W(E_8)\) has just two linear characters, an exact cover contains at most two full-group terms.
The exact character table gives
\[
 \Gamma_W(1)=199952,\qquad\Gamma_W(z)=15120.
\]
Hence \(15120\) must be one index, or a sum of two indices, of subgroups containing \(z\) whose permutation characters are MF.

Write
\[
 G=W/\langle z\rangle\cong O_8^+(2):2,
 \qquad L=G'\cong O_8^+(2),\qquad |L|=174182400.
\]
The table of marks of \(L\) has \(11171\) subgroup classes.
For \(U\leq G\), either \(U\leq L\) and \([G:U]=2[L:U]\), or
\([G:U]=[L:U\cap L]\).
The subgroup orders in that complete table therefore give the following containing set of possible indices \([G:U]\leq15120\):
\begin{equation}
\begin{gathered}
1,2,120,135,240,270,960,1080,1120,1575,1920,2025,\\
2160,2240,3150,3360,3780,4050,4320,4480,4725,6300,\\
6720,7560,8640,9450,11200,12096,12600,13440,14175,14400,15120.
\end{gathered}
\label{eq:E8indices}
\end{equation}
The only possibilities for the required central sum are \(15120\) and \(7560+7560\).
It suffices to exclude MF permutation characters at these two indices.

If \(U\leq L\), its index in \(L\) is \(3780\) or \(7560\).
The table has respectively \(3\) and \(12\) such subgroup classes, and direct scalar products with its irreducible character table show that every corresponding permutation character has a repeated constituent.
Induction to \(G\) preserves that repetition.

Otherwise put \(K_0=U\cap L\).
Then \([L:K_0]\) is \(7560\) or \(15120\), and \([U:K_0]=2\).
In particular \(U\leq N_G(K_0)\), so every such \(U\) is the preimage of an order-two subgroup of \(N_G(K_0)/K_0\) whose preimage is not contained in \(L\).
The table gives \(12\) and \(6\) possibilities for \(K_0\), respectively.
Enumerating all those normalizer quotients and their order-two subgroups gives the complete relevant extension list:
\[
\begin{array}{c|r|r|r}
 [G:U]&\text{extension representatives}&
  \text{maximum multiplicity }2&\text{maximum multiplicity }3\\ \hline
 7560&7&6&1\\
 15120&6&4&2.
\end{array}
\]
Thus no extension has an MF permutation character.
The finite computations here use the full tables of marks and exact scalar products, and the archive records all subgroup orders, the normalizers, their extensions, and the multiplicity vectors \cite{GAP,CTblLib,TomLib,ZhangData}.
Both alternatives in \eqref{eq:E8centralterm} are impossible, completing the proof.
\end{proof}

\begin{corollary}[Exceptional QP Gelfand pairs]
\label{cor:exceptionalGelfand}
For \(W\) of type \(H_4,F_4,E_6,E_7\), the full-support QP subgroups
\(H\) for which \((W,H)\) is a Gelfand pair have the following indices,
up to subgroup conjugacy admitting a QP minimum point:
\[
\begin{array}{c|l}
 H_4&1,2,10,72,144\\
 F_4&1,2^{(3)},3^{(2)},4,6^{(5)},8,9,16,18,32^{(2)}\\
 E_6&1,2,45,90,135\\
 E_7&1,2,135,270,315,630^{(3)},1260.
\end{array}
\]
Here \(d^{(a)}\) denotes \(a\) conjugacy classes of index \(d\),
rather than a power.

There are also explicit root-frame Gelfand pairs in the two largest
exceptional Weyl groups.  Let \(X_7\) and \(X_8\) be the Green--Xu
sets of maximal orthogonal positive-root sets, and let
\(M_i\) be a point stabilizer in \(W(E_i)\).  Then
\[
 |X_7|=135,\qquad |X_8|=2025,
\]
\[
 |M_7|=21504,\qquad |M_7^\circ|=10752,
 \qquad |M_8|=344064.
\]
These are full-support QP subgroups, and the pairs
\((W(E_7),M_7)\), \((W(E_7),M_7^\circ)\), and
\((W(E_8),M_8)\) are Gelfand pairs.  With all displayed constituents
distinct, their permutation characters are as follows.  Let \(A_7\)
join two frames sharing three roots, and let \(A_8\) join two frames
sharing four roots.  Write \(\rho_{d,\theta}\) for the character of the
\(\theta\)-eigenspace of \(A_i\), where \(d\) is its dimension.  Then
\begin{align}
 \pi_7:=\Ind_{M_7}^{W(E_7)}\one
   &=\rho_{1,14}+\rho_{15,-7}+\rho_{35,5}+\rho_{84,-1},
   \label{eq:E7framecharacter}\\
 \Ind_{M_7^\circ}^{W(E_7)}\one
   &=\pi_7+\sgn_{W(E_7)}\pi_7,
   \label{eq:E7evenframecharacter}\\
 \Ind_{M_8}^{W(E_8)}\one
   &=\rho_{1,28}+\rho_{50,-14}+\rho_{50,10}+
     \rho_{84,13}+\rho_{168,-2}+\rho_{700,-5}+\rho_{972,3}.
   \label{eq:E8framecharacter}
\end{align}
In particular, the corresponding orbital algebras have dimensions
\(4,8,7\), respectively.
\end{corollary}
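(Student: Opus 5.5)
The proof has two halves: the list of QP Gelfand pairs for \(H_4,F_4,E_6,E_7\), and the three root-frame pairs in \(W(E_7)\) and \(W(E_8)\).

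For the index lists, I would reuse the finite enumeration behind Theorem~\ref{thm:exceptional}, keeping only the full-group source and only trivial inducing character. For \(H_4,F_4,E_6\) the complete subgroup class lists are already available: 181, 246 and 350 classes. For each class I would take the permutation character \(\Ind_H^W\one\), test multiplicity-freeness by scalar products, and run the basepoint QP test of Proposition~\ref{prop:finite} on every coset of a representative. Full support means \(V^H=0\). This can be read off the character, since \(\dim V^H=\langle\Ind_H^W\one,\chi_V\rangle\) by Frobenius reciprocity.

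For \(E_7\) I would use the reduction already proved: \(H\) is either \(\langle z\rangle\times L\) or a graph of some \(\varphi\in\Hom(L,C_2)\). An MF permutation character forces \(\Ind_L^K\one\) to be MF, which leaves the 31 listed classes of \(\Sp_6(2)\) and their 86 lifts. Each lift then gets the trivial-character MF test, the full-support test and the QP test. The index multisets displayed in the corollary are just the surviving records, grouped by index, with conjugacy classes counted separately. The indices \(1\) and \(2\), that is \(W\) and \(\Ker\sgn\), serve as a sanity check. So would the frame stabilizer \(M_7\) at index 135, and its even subgroup at 270.

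For the root frames, I would take \(X_7\) and \(X_8\) from Green--Xu~\cite{GXroots}. They show these are transitive QP sets, so the stabilizers \(M_i\) are QP. Full support follows because the roots of a frame span \(V\) and are only permuted up to sign. The orders come from \(|W|/|X_i|\): \(2903040/135=21504\) and \(696729600/2025=344064\).

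The \(E_7\) set \(X_7\) contains at least one odd element (inherited from a simple-reflection stabilizer). Hence \(M_7^\circ\) has index two in \(M_7\), and \(M_7^\circ=\Ker(\sgn|_{M_7})\). The index-two formula then gives \eqref{eq:E7evenframecharacter}.

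For the decompositions, the Gelfand property would follow from Proposition~\ref{prop:Gelfandcriteria}(ii): I would count the orbits of \(W\) on \(X_i\times X_i\). These are classified by how many roots two frames share (up to sign), and each intersection size should give one orbit. That yields 4 orbits for \(E_7\) (intersections 7, 3, 1, 0) and 7 for \(E_8\).

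Since the orbital algebra is then commutative, it is simultaneously diagonalized by the adjacency matrix \(A_i\), assuming \(A_i\) generates it. The eigenvalues of \(A_i\), with their multiplicities, then give the dimensions of the eigenspaces. For example, the eigenvalue 14 for \(E_7\) is the valency. The irreducible characters are identified by matching degrees and, where needed, values at a few classes. For \(\Ind_{M_7^\circ}\) one also has to check that \(\pi_7\) and \(\sgn\,\pi_7\) share no constituent.

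The main obstacle is \(E_8\). I cannot just compute a spectrum; I must show that the 7 orbits really are 7 and that \(A_8\) alone separates the seven eigenspaces, with two eigenspaces of equal dimension 50. I would first try a direct computation of \(A_8\) on the 2025 frames, using the archive data. Only if \(A_8\) fails to generate the algebra would I fall back on the orbital basis with several intersection matrices.
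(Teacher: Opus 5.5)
The index tables are handled as in the paper. You filter the subgroup records behind Theorem~\ref{thm:exceptional} by the QP basepoint test, multiplicity-freeness and full support. Your test $\dim V^H=\langle\Ind_H^W\one,\chi_V\rangle$ is a clean way to make the full-support filter explicit.

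\textbf{The $E_8$ orbital count.} The root-frame half has a genuine gap here, because intersection size cannot classify the orbitals. Modulo $2E_8$, positive roots correspond bijectively to the $120$ nonsingular vectors of the $O_8^+(2)$-space. A frame corresponds to a $4$-dimensional totally isotropic subspace $U$ on which $Q$ is a nonzero linear form; there are $2295-270=2025$ of these. Two frames then share $2^{j-1}$ or $0$ roots, where $j=\dim(U_1\cap U_2)$. So only the values $8,4,2,1,0$ occur, and intersection size separates at most five of the seven orbitals. Your claim ``one orbit per intersection size, giving $7$'' is therefore false. The symmetric-orbital route to commutativity that it implicitly supplies fails with it.

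Moreover, an orbital count alone gives only $\sum_\rho m_\rho^2$, not commutativity. For $E_7$ the value $4$ with $m_{\one}=1$ happens to force multiplicity one, but for $E_8$ the decomposition $7=1+4+1+1$ is not excluded. The paper obtains the counts $4$ and $7$ by exact computation and then argues as follows, which you should adopt. The $r$ distinct eigenvalues of the $W$-equivariant matrix $A_i$ give $r$ nonzero invariant subspaces, so $r\le\sum m_\rho\le\sum m_\rho^2$, the number of orbitals. Then $r=4,7$ forces every $m_\rho=1$ and every eigenspace to be irreducible. Your hypothesis that $A_i$ generates the orbital algebra is equivalent to this equality, since $\dim\C[A_i]=r$ for the symmetric matrix $A_i$. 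So your fallback computation does close, provided the orbital count comes from direct computation rather than intersection sizes.

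\textbf{The even subgroup $M_7^\circ$.} You flag that $\pi_7$ and $\sgn\pi_7$ must be disjoint, which is needed for the Gelfand property of $M_7^\circ$ and for the dimension $8$, but you give no argument. Degrees cannot decide it, since both sides have constituent degrees $1,15,35,84$. The missing idea is the central element $-I\in M_7$. It has odd Coxeter sign in $E_7$ and fixes every coset $wM_7$. Hence it acts by $+1$ on every constituent of $\pi_7$ and by $-1$ on every constituent of $\sgn\pi_7$. The same element gives $[M_7:M_7^\circ]=2$ directly, replacing your unclear appeal to a simple-reflection stabilizer.

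\textbf{Full support and QP.} Full support is not justified as written: permuting a spanning set up to sign is also true of the trivial subgroup. What is needed is $A_R\le M_i$, where $A_R$ is generated by the reflections in the frame roots, together with $V^{A_R}=0$. You omit $M_7^\circ$ entirely, and there $-I$ is unavailable. Use the products $s_\alpha s_\beta\in A_R\cap M_7^\circ$ for full support, and Proposition~\ref{prop:QPfacts}(iii) for the QP property. Finally, take $M_i$ to be the stabilizer of the minimum frame, since only that stabilizer is guaranteed QP with the minimum-length coset height.
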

\begin{proof}
The first table is exactly the sublist of the finite enumeration in
the proof of Theorem~\ref{thm:exceptional} for which the QP test
succeeds and the permutation character is MF.  The archive records
the generators, accepted basepoints, and exact scalar products for
every class.

The QP structures on \(X_7,X_8\) and their transitivity are proved in
\cite{GXroots}.  Orbit--stabilizer gives the three displayed subgroup
orders; for \(M_7^\circ\), use \([M_7:M_7^\circ]=2\), proved below.
If \(R\) is the frame and \(A_R\) is generated by its commuting root
reflections, then \(M_i=N_{W(E_i)}(A_R)\).  For each \(i\), the subgroup
\(A_R\leq M_i\) has zero fixed space; the even subgroup
\(A_R\cap M_7^\circ\) also has zero fixed space.  Hence all three
stabilizers have full support.  The minimum point in the transitive QP action gives the
QP assertion, and Proposition~\ref{prop:QPfacts}(iii) gives it for
\(M_7^\circ\).
The exact orbital calculations in \cite{ZhangData} give four orbitals
for \(X_7\) and seven for \(X_8\).  A generating orbital adjacency
matrix has spectra
\[
 14^{(1)},\ 5^{(35)},\ (-1)^{(84)},\ (-7)^{(15)}
\]
and
\[
 28^{(1)},\ 10^{(50)},\ (-14)^{(50)},\ 13^{(84)},\
 (-2)^{(168)},\ (-5)^{(700)},\ 3^{(972)},
\]
respectively.  Thus the adjacency matrix supplies, respectively, four
and seven nonzero invariant eigenspaces.  On the other hand, the
dimension of the permutation commutant is the number of orbitals,
namely four and seven.  If the multiplicities of the irreducible
constituents are \(m_\rho\), Proposition~\ref{prop:Gelfandcriteria}(i)
therefore gives
\(\sum_\rho m_\rho^2=4\), respectively \(7\).  The displayed number of
nonzero invariant eigenspaces forces equality term by term: every
eigenspace is one irreducible constituent and all multiplicities are
one.  This proves \eqref{eq:E7framecharacter} and
\eqref{eq:E8framecharacter}; the two degree-\(50\) eigenspaces in the
second spectrum are distinct constituents.

For type \(E_7\), the central negative element lies in \(M_7\) and has
odd Coxeter sign.  Induction from the index-two subgroup
\(M_7^\circ\) therefore gives \(\pi_7+\sgn_{W(E_7)}\pi_7\).
The two supports have opposite central eigenvalues and are disjoint,
which proves \eqref{eq:E7evenframecharacter}.
\end{proof}

\begin{remark}[Comparison with the literature]
Green and Xu establish the QP root-frame actions and their level
statistics in \cite{GXroots,GXFano}; their statements do not include
the permutation-character decompositions
\eqref{eq:E7framecharacter}--\eqref{eq:E8framecharacter}.
After quotienting by the central negative element, the \(135\)-point
\(E_7\) action is the classical rank-four dual polar action of
\(\Sp_6(2)\), so its Gelfand-pair and spectral content also occurs in
the literature on dual polar association schemes \cite{BCN}.
Winter and van Luijk had already obtained the transitive \(2025\)-point
\(E_8\) action and the stabilizer order \(344064\) in their study of
the \(E_8\) root graph \cite{WinterVanLuijk}.  The commutativity and
the spectrally labelled decomposition in
\eqref{eq:E8framecharacter} are the additional conclusions supplied
by the exact orbital computation here.

The classification of \cite{APVM} concerns standard parabolic corner
algebras and does not imply the full-support nonparabolic table above.
We are not aware of an earlier published classification in precisely
the class of full-support QP Gelfand pairs.
The table makes no assertion about QP subgroups with non-MF
permutation characters or about all Gelfand pairs of the exceptional
Coxeter groups.  Likewise, the existence of the frame pairs is fully
compatible with Theorem~\ref{thm:exceptional}: a commutative orbital
algebra does not provide the disjoint exact cover required for a
Gelfand model.
\end{remark}
\section{Hecke realization and canonical bases}
\label{sec:hecke}

We now construct the representations corresponding to the classified characters.
The distinction between field-level realizations and natural induction lattices will be maintained throughout.
All canonical bases use \eqref{eq:canonical-def}.

\subsection{General constructions and classical models}

\begin{lemma}\label{lem:canonicalalgorithm}
Suppose a free \(\A\)-module has a triangular bar structure on a finite ordered standard basis.
Then its canonical basis exists and is obtained by the following finite recursion.
Having constructed \(C_y\) for \(y<x\), write
\[
 \overline{m_x}-m_x=\sum_{y<x}b_{y,x}C_y .
\]
For a Laurent polynomial \(b\), let \(b_{<0}\) denote its strictly negative-degree part.
Then
\begin{equation}
 C_x=m_x+\sum_{y<x}(b_{y,x})_{<0}C_y .
 \label{eq:canonicalalgorithm}
\end{equation}
\end{lemma}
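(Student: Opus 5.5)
The plan is to prove existence and uniqueness together by induction along the order, using only the triangularity of the bar structure and the fact that \(\psi\) is an involution.

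Fix a linear extension of the order on \(X\) and induct along it. The inductive hypothesis at \(x\) is that for every \(y<x\) we have already constructed \(C_y\) with \(\overline{C_y}=C_y\) and \(C_y\in m_y+\sum_{z<y}v^{-1}\Z[v^{-1}]m_z\). By unitriangularity, the \(C_y\) with \(y<x\) then span the same \(\A\)-module as the \(m_y\) with \(y<x\). Hence the expansion \(\overline{m_x}-m_x=\sum_{y<x}b_{y,x}C_y\) exists and is unique, with \(b_{y,x}\in\A\).

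The main step is a symmetry property of the coefficients \(b_{y,x}\). Apply the bar operator to the defining identity. Since \(\overline{\overline{m_x}}=m_x\) and each \(C_y\) is bar-invariant, we get
\[
 m_x-\overline{m_x}=\sum_{y<x}\overline{b_{y,x}}\,C_y .
\]
Comparing with the original identity and using uniqueness of the expansion gives \(\overline{b_{y,x}}=-b_{y,x}\). An antisymmetric Laurent polynomial has zero constant term, so
\[
 b_{y,x}=(b_{y,x})_{<0}-\overline{(b_{y,x})_{<0}} .
\]

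Now define \(C_x\) by \eqref{eq:canonicalalgorithm}. Its bar image is
\[
 \overline{C_x}=\overline{m_x}+\sum_{y<x}\overline{(b_{y,x})_{<0}}\,C_y
 =m_x+\sum_{y<x}\Bigl(b_{y,x}+\overline{(b_{y,x})_{<0}}\Bigr)C_y ,
\]
and by the decomposition of \(b_{y,x}\) just obtained, the coefficient of each \(C_y\) is \((b_{y,x})_{<0}\). Thus \(\overline{C_x}=C_x\). For the degree condition, each \((b_{y,x})_{<0}\) lies in \(v^{-1}\Z[v^{-1}]\). By the inductive hypothesis each \(C_y\) lies in \(m_y+\sum_{z<y}v^{-1}\Z[v^{-1}]m_z\), and \(v^{-1}\Z[v^{-1}]\) is closed under multiplication and addition. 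Therefore \(C_x\in m_x+\sum_{y<x}v^{-1}\Z[v^{-1}]m_y\), which completes the induction.

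Uniqueness is already asserted after \eqref{eq:canonical-def}, but I will also give the short direct argument. Suppose two elements satisfy \eqref{eq:canonical-def} for the same \(x\). Their difference \(D\) is bar-invariant and lies in \(\sum_{y<x}v^{-1}\Z[v^{-1}]m_y\). Take a maximal \(y\) with nonzero coefficient \(c\) in \(D\). By triangularity, the coefficient of \(m_y\) in \(\overline{D}\) is \(\bar c\). Bar-invariance then gives \(c=\bar c\), which is impossible for a nonzero element of \(v^{-1}\Z[v^{-1}]\). Hence \(D=0\).

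The only real obstacle is the antisymmetry of \(b_{y,x}\). It depends on the already constructed \(C_y\) being exactly bar-invariant, so that the bar of the expansion can be compared term by term with the original, and on the uniqueness of the expansion in the basis \(\{C_y\}_{y<x}\).
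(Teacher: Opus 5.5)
Your proof is correct and follows the paper's own argument: antisymmetry \(\overline{b_{y,x}}=-b_{y,x}\) from bar-invariance of the earlier \(C_y\), the splitting \(b_{y,x}=(b_{y,x})_{<0}-\overline{(b_{y,x})_{<0}}\) giving bar-invariance of \(C_x\), and uniqueness via the highest nonzero coefficient of a difference. You also make explicit two steps the paper leaves implicit, namely that the \(C_y\) for \(y<x\) span the same module as the \(m_y\) (so the expansion exists and is unique) and that \(v^{-1}\Z[v^{-1}]\) is closed under products.
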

\begin{proof}
Applying bar to the left side negates it, so
\(b_{y,x}+\overline{b_{y,x}}=0\).
It follows that
\((b_{y,x})_{<0}-\overline{(b_{y,x})_{<0}}=b_{y,x}\),
which proves bar invariance of \eqref{eq:canonicalalgorithm}.
Its lower coefficients contain only negative powers.
For uniqueness, take the highest nonzero standard coefficient in the difference of two proposed canonical vectors.
It is both bar invariant and a strictly negative-degree polynomial, a contradiction.
\end{proof}

For an ordinary QP module in Proposition~\ref{prop:barinput}, take a strictly increasing simple-reflection path
\(x=s_d\cdots s_1x_0\).
Its bar is explicitly
\begin{equation}
 \overline{m_x}=T_{s_d}^{-1}\cdots T_{s_1}^{-1}m_{x_0}.
 \label{eq:pathbar}
\end{equation}
The bar theorem guarantees path independence.
Combining \eqref{eq:ordinaryM}, \eqref{eq:pathbar}, and Lemma~\ref{lem:canonicalalgorithm} computes all matrices over integer Laurent polynomials.

Let
\[
 \mu(y,x)=[v^{-1}]\,[m_y]C_x.
\]
For \(M(X)\), put \(\tau(x)=\{s:h(sx)>h(x)\}\).
For \(N(X)\), put \(\tau(x)=\{s:h(sx)\geq h(x)\}\).
Marberg's canonical multiplication theorem gives \eqref{eq:Wgraph-def}, with
\begin{equation}
 \omega(x,y)=
 \begin{cases}
 \mu(y,x)+\mu(x,y),&\tau(x)\not\subseteq\tau(y),\\
 0,&\tau(x)\subseteq\tau(y).
 \end{cases}
 \label{eq:canonicalweights}
\end{equation}
For parabolically induced perfect modules, the label consists of strict ascents together with weak fixed points having negative scalar; the same construction is supplied by \cite{MZgraphs,HY1,HY2}.
These formulas specify the convention of every graph in this paper.

\begin{example}[A two-point QP module]
Suppose each simple reflection exchanges \(x_0,x_1\), with heights \(0,1\).
On the ordered standard basis \(m_0,m_1\), every generator has matrix
\[
 A=\begin{pmatrix}0&1\\1&a\end{pmatrix}.
\]
We have
\[
 \bar m_0=m_0,\qquad\bar m_1=m_1-am_0,\qquad
 C_0=m_0,\quad C_1=m_1+v^{-1}m_0.
\]
The labels are \(\tau(0)=S,\tau(1)=\emptypar\), and the graph has one edge \(0\to1\) of weight one.
Indeed \(T_sC_0=-v^{-1}C_0+C_1\) and \(T_sC_1=vC_1\).
\end{example}

\begin{theorem}\label{thm:naturalclassical}
Every family in Theorems~\ref{thm:A}, \ref{thm:B}, and \ref{thm:D} has a realization on natural induction lattices with a compatible bar, canonical basis, and integral canonical \(W\)-graph.
\end{theorem}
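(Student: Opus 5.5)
The plan is to treat each inducing term of each classified family separately and realize it on its natural lattice \(\Z W\otimes_{\Z H}\Z_{\sigma|_H}\). A family is then realized by the direct sum of the term modules, with the bar operator, canonical basis and \(W\)-graph taken as the disjoint union. So it suffices to show that every individual term in Theorems~\ref{thm:A}, \ref{thm:B}, and \ref{thm:D} is the specialization of a QP Hecke module with a triangular bar structure, or of a parabolically induced such module. Given that, Lemma~\ref{lem:canonicalalgorithm} produces the canonical basis. Marberg's theorem, in the form \eqref{eq:canonicalweights}, together with the induction results of \cite{HY1,HY2}, produces the integral canonical \(W\)-graph.

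\textbf{Step 1: perfect terms.} Most terms are covered directly by the perfect-model constructions: the layers \(R_q,C_q\) in type \(A\), the layers \(\Theta_k\) and the split terms \(U_n,\ee U_n\) in type \(B\) (existence and perfect realization from \cite[Theorem 4.5]{MZperfect}), and the ordinary layers \(\Phi_k\) in type \(D\). Each has the form \(\Ind_{W_J}^W\) of a Hecke module of the centralizer type \(M(X)\) or \(N(X)\) on \(W_J/H\) with \(H=C_k\times S_q\) or \(\pair_k\times S_q\). The inducing character \(\one\boxtimes\sgn\) is produced by taking \(M\) on the pairing factor and \(N\) on the symmetric factor; its specialization is \(\Ind_H^{W_J}\Z_{\sgn_W|_H}\) after the height-sign change. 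The product of QP sets on commuting factors is QP, and Proposition~\ref{prop:barinput} supplies the bar operator on each factor. The bar on the induced module is \(\overline{h\otimes m}=\bar h\otimes\bar m\), which is triangular for the order \((w,x)\) by \(\ell(w)+h(x)\) by Proposition~\ref{prop:QPfacts}(ii). The twisted families \(\om\mathcal M\), \(\sgn\mathcal M\) and \(\mathcal M^\diagauto\) come from the interchange \(M\leftrightarrow N\) and from the diagram automorphism. These preserve triangularity and integrality, and specialization commutes with them.

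\textbf{Step 2: the \(E_m\) term and the four exceptional \(S_4\) terms.} These are not perfect terms. The \(S_4\) terms \(\Ind_{S_2\times S_2}^{S_4}\one\) and \(\Ind_{S_3}^{S_4}\sgn\) are parabolic, so they are the modules \(M(W/W_J)\) and \(N(W/W_J)\) with Deodhar's canonical bases and Kazhdan--Lusztig \(W\)-graphs. For \(\Ind_{E_m}^{D_{2m}}\one\) with \(m\) odd, \(E_m\) is Lu's even-special QP subgroup of \(D_{2m}\) (Proposition~\ref{prop:Lu}(iii)). Since \(D_{2m}\) is classical, Proposition~\ref{prop:barinput} applies to \(M(D_{2m}/E_m)\) directly. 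This gives the bar operator, the canonical basis and integral weights, and the specialization is \(\Z[D_{2m}/E_m]\), the natural lattice. The same argument handles the split \(k=1\) terms in type \(B\) if they are not already perfect, using the QP subgroup \(\Ker\ee\cap(B_2\times S_q)\)-type graphs from the proof of Theorem~\ref{thm:B}.

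\textbf{Main obstacle.} The hardest point is keeping the inducing character inside the natural lattice when that character is a mixed twist rather than \(\one\) or \(\sgn_W\). I would handle it by exhibiting each such term as \(\Ind_{W_J}^W\) of an external product of \(M\)- and \(N\)-modules on consecutive factors. By Lemma~\ref{lem:Acolumns}, and by the forms of \(\Theta_k,\Phi_k\), every classified term has exactly this shape, so no genuinely mixed character on a single QP factor is needed. I would then check that the sign change in the \(N\)-factor basis matches \(\sigma|_H\) on the product.
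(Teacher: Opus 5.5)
Your proposal is correct and follows essentially the same route as the paper. The ordinary layers and the split type \(B\) family come from the perfect realizations of \cite{MZgraphs,MZperfect} with parabolic \(W\)-graph induction, the even-special top term is the ordinary QP module \(M(D_{2m}/E_m)\) from Proposition~\ref{prop:barinput}, and the dual families come from the \(M\leftrightarrow N\) interchange and the diagram automorphism; treating the extra \(S_4\) terms as Deodhar parabolic modules is a harmless variant of the paper's appeal to perfect realizations. There are two small slips, neither affecting the argument: the split terms \(U_n,\ee U_n\) are parabolic inductions of the one-dimensional characters \(\one\boxtimes\sgn\) and \(\ee|_{B_2}\boxtimes\sgn\) of \(B_2\times S_{n-2}\), not modules attached to \(\Ker\ee\cap(B_2\times S_q)=C_1\times S_q\) (the source of \(\Theta_1\)), so \(\ee|_{B_2}\) is in fact a mixed character on a single factor, contrary to your last paragraph; and the specialization of \(M\boxtimes\sgn\) is \(\Ind_H^{W_J}(\one\boxtimes\sgn)\), not \(\Ind_H^{W_J}\Z_{\sgn_W|_H}\).
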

\begin{proof}
For type A, for type B in rank at least four, and for odd type D, choose the perfect realizations of the classified character families.
Their Hecke, bar, and canonical \(W\)-graph structures are the constructions of \cite{MZgraphs}.
This includes the small additional \(S_4\) families and the split B family.

For \(n=2m\) with \(m\) odd, let \(J_k\) be the standard parabolic of type \(D_{2k}\times S_{n-2k}\), interpreting \(J_0\) as the unsigned \(S_n\).
Take
\begin{equation}
\begin{split}
 \mathcal G_{D_n}
 ={}&\bigoplus_{k=0}^{m-1}
 \HH(D_n)\otimes_{\HH(W_{J_k})}
 \bigl(M(D_{2k}/C_k)\boxtimes\sgn_{\HH(S_{n-2k})}\bigr)\\
 &\hspace{25mm}\oplus M(D_n/E_m).
\end{split}
\label{eq:DHecke}
\end{equation}
The \(k=0\) tensor factor is simply the Hecke sign module of \(S_n\).
The preceding summands are perfect parabolic constructions.
The last is an ordinary QP module, since \(E_m\) is QP by Lu's even-special classification.
Proposition~\ref{prop:barinput}, parabolic induction, and
\eqref{eq:pathbar}--\eqref{eq:canonicalweights} give all the required structures.
At \(v=1\), \eqref{eq:DHecke} is the direct sum of the specified natural induction lattices.
Its character is \(\Gamma_{D_n}\) by Theorem~\ref{thm:D}.
The Tits deformation correspondence gives the generic Hecke Gelfand property.

Apply the diagram automorphism to obtain the diagram-dual family.
For sign-dual families use the corresponding \(N\) and dual perfect constructions, with their own canonical bases.
They again specialize to the sign-twisted natural lattices.
\end{proof}

The theorem asserts the existence of a natural realization of each individual-character family.
Canonical-basis uniqueness applies after the chosen standard basis, order, and bar are fixed.
It is not a uniqueness assertion about all triples representing the same character.

\begin{example}[The \(D_6\) realization]
For \(n=6\), the four summands of \eqref{eq:DHecke} have ranks \(32,240,360,120\), respectively.
The final summand is the ordinary QP module \(M(D_6/E_3)\).
Their direct sum is a \(752\)-dimensional Gelfand module, with bar and canonical basis supplied by the preceding construction.
\end{example}

\begin{corollary}[Generic realization from Gyoja's theorem]\label{prop:allHecke}
Every individual-character model in Theorem~\ref{thm:main} has a termwise generic Hecke realization over a splitting field, with a compatible bar and a \(W\)-graph.
\end{corollary}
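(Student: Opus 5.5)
The plan is to work one inducing term at a time and then add up. Fix a model in Theorem~\ref{thm:main} and a term \(\chi=\Ind_H^W(\sigma|_H)\) in it. The field is \(K\), a splitting field for \(\HH(W)\) that contains \(\Q(v)\) and the character values. Because \(\chi\) is MF, it is a sum of distinct irreducibles \(\rho\in\Supp(\chi)\). Proposition~\ref{prop:Gyoja} attaches to each \(\rho\) an irreducible generic Hecke module \(V_\rho\) over \(K\), together with a \(W\)-graph basis whose weights lie in \(K\), and the Tits deformation correspondence matches \(V_\rho\) with \(\rho\). The termwise realization of \(\chi\) is then
\[
 V_\chi=\bigoplus_{\rho\in\Supp(\chi)}V_\rho .
\]
On this module the vertex set is the disjoint union of the vertex sets, the labels and weights are the individual ones, and the weights between different summands are zero. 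With these choices \eqref{eq:Wgraph-def} holds summand by summand, so the disjoint union of the graphs is a \(W\)-graph for \(V_\chi\).

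Next comes the bar operator. Take the remark after Proposition~\ref{prop:Gyoja}: on a \(W\)-graph basis, the antilinear map that fixes every vertex and applies \(v\mapsto v^{-1}\) to the coefficients is compatible with the bar involution of \(\HH(W)\). To check this I would apply \(\overline{\phantom{x}}\) to both cases of \eqref{eq:Wgraph-def}. In the first case \(\overline{T_s}=T_s-a\) sends \(vC_x\) to \(v^{-1}C_x\), and that is what \(T_s^{-1}\) does. In the second case the weights have to be fixed under the extended conjugation.

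This is the one real difficulty. The weights lie in a splitting field, not in \(\Z\), so \(v\mapsto v^{-1}\) has to be extended to a field automorphism of \(K\) that fixes the weights. My first attempt is to choose \(K=k(v)\), where \(k\) is a finite extension of \(\Q\) that contains the \(W\)-graph weights of Gyoja--Hahn. I take these weights to be constants, independent of \(v\), since Hahn's formulation produces \(W\)-graphs with scalar edge weights. Conjugation \(v\mapsto v^{-1}\) over \(k\) then fixes every weight, and the compatibility follows directly from \eqref{eq:Wgraph-def}. If the cited weights are not constant, the fallback is to define the conjugation on \(K\) as a field automorphism extending \(\overline{\phantom{x}}\) on \(\Q(v)\) and then check that the Hecke relations are preserved, which is automatic because the relations are bar-stable.

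The last step is to assemble the whole model. Take the direct sum of the \(V_\chi\) over the terms of the family. By the Tits correspondence its character is \(\sum_\chi\chi=\Gamma_W\), so it is a generic Gelfand module, and every summand carries its own bar operator and \(W\)-graph. No lattice statement is claimed here, because that is the content of Theorem~\ref{thm:naturalclassical} and of the \(B_3\) discussion that follows.
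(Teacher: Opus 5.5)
Your argument is correct and is essentially the paper's proof: a Gyoja--Hahn \(W\)-graph for each constituent, the disjoint union of these graphs for each term, the exact cover for the Gelfand property, and vertex-fixing coefficient conjugation as the bar. Your choice \(K=k(v)\) with scalar weights in \(k\) makes explicit the weight-invariance point that the paper leaves implicit.

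Drop the fallback, though. Bar-stability of the Hecke relations only shows that \(\bar\omega\) again gives a \(W\)-graph, not that \(\bar\omega=\omega\), and the latter is what compatibility requires. The scalar-weight route is the one that works.
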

\begin{proof}
Choose, by Proposition~\ref{prop:Gyoja}, a \(W\)-graph for every irreducible constituent.
For each inducing term, take the disjoint union of the graphs in its support.
The exact-cover property makes the direct sum a Gelfand module.
Coefficient conjugation fixing the graph vertices gives the bar.
With the graph basis itself as standard basis and the discrete order, it is a canonical basis in \eqref{eq:canonical-def}.
This construction is over a splitting field and does not identify an induction lattice.
\end{proof}

\subsection{Gelfand-pair commutants and spherical functions}

Let \(K\) be a splitting field for the generic Hecke algebra.
If a natural module attached to a term of the classification has
irreducible support of size \(r\), then Tits deformation and
Proposition~\ref{prop:Gelfandcriteria} give
\[
 \operatorname{End}_{\HH_K(W)}(M_K)\cong K^r.
\]
Thus the generic commutant and the finite corner are split commutative
algebras of the same dimension \(r\).  This is only a semisimple
comparison: it neither constructs a flat integral deformation of the
finite corner nor provides an integral basis or structure constants for
the generic commutant.

For an ordinary parabolic pair \((W,W_J)\), there is a more concrete
construction.  Let \(e_J\) be the normalized idempotent of
\(\HH_K(W_J)\) affording its trivial representation.  The algebra
\(e_J\HH_K(W)e_J\) is commutative exactly when the parabolically
induced module is MF; all such commutative parabolic Hecke algebras are
classified in \cite{APVM}.  Yanagida's Theorem~3.3 constructs normalized
Hecke-spherical functions that form a basis of this corner algebra;
their specialization at \(v=1\) gives the ordinary zonal spherical
functions \cite{Yanagida}.  Consequently, whenever a pair recorded in the
preceding type-by-type corollaries has a standard parabolic stabilizer,
it admits this additional spherical description.

For a general QP subgroup \(H\), there need not be an Iwahori--Hecke
subalgebra \(\HH(H)\subseteq\HH(W)\).  The finite corner
\(e_\lambda\C W e_\lambda\) therefore cannot simply be replaced by
the parabolic formula.  The QP module and bar operator provide the
appropriate deformation in this paper, while constructing a
distinguished integral basis of its commutant remains a separate
problem.  The type \(B_3\) obstruction below shows concretely that
commutativity of the complex commutant does not force an integral
\(W\)-graph on the natural induction lattice.

In the dihedral case the irreducible graphs can be written without an existence argument.
For \(\rho_h\), put \(c_h=2\cos(\pi h/m)\).
Take vertices \(x,y\), labels \(\tau(x)=\{s\},\tau(y)=\{t\}\), and both edge weights \(c_h\).
Then
\[
 T_s=\begin{pmatrix}-v^{-1}&0\\c_h&v\end{pmatrix},
 \qquad
 T_t=\begin{pmatrix}v&c_h\\0&-v^{-1}\end{pmatrix}.
\]
The product has determinant \(1\) and trace
\(c_h^2-2=2\cos(2\pi h/m)\); the matrices satisfy the length-\(m\) braid relation and specialize to \(\rho_h\).
Together with the scalar graphs for \(\lambda_{ab}\), their disjoint unions explicitly realize every family of Theorem~\ref{thm:dihedral}.
The bar fixes \(x,y\).

For \(H_3\), the first three families of Theorem~\ref{thm:H3} have perfect realizations.
Replacing the two linear summands by \(M(W/\Ker\om)\) gives the fourth family's natural bar and canonical structure.
The two-point graph joins the corresponding two-dimensional term.

\begin{remark}
Preserving a natural lattice is a stronger requirement than realizing its complex character.
The next subsection gives explicit modules in which the natural lattice admits a compatible bar and a unique triangular canonical basis, but cannot underlie any integral \(W\)-graph.
This distinction is part of the realization theorem.
\end{remark}

\subsection{Natural canonical structures for the additional \texorpdfstring{\(B_3\)}{B3} models}
\label{sec:B3canonical}

This subsection completes the natural realization problem for all the characters in Table~\ref{tab:B3terms}.
Recall that the natural lattice of a term is
\[
 L(H,\sigma)=\Z W\otimes_{\Z H}\Z_{\sigma|_H}.
\]
Its basis consists of signed coset vectors; equality of complex characters alone does not identify this lattice.

\begin{theorem}\label{thm:B3natural}
All twenty-four type \(B_3\) models have termwise Hecke deformations of their natural lattices, with compatible bar operators and explicit canonical bases.
Among the sixteen additional models, precisely
\[
 \mathscr N_3,\ \mathscr N_4,\ \mathscr N_5,\qquad
 \om\mathscr N_3,\ \om\mathscr N_4,\ \om\mathscr N_5
\]
admit termwise natural integral canonical \(W\)-graphs.
The other ten contain a natural summand lattice which cannot be the specialization of any integral \(W\)-graph module.
Including the eight perfect models, the total number admitting termwise natural canonical \(W\)-graphs is fourteen.
\end{theorem}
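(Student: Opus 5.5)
The plan is to treat the statement term by term: there are only the finitely many characters of Table~\ref{tab:B3terms}, each with a fixed natural lattice \(L(H,\sigma)\). It therefore suffices to (a) build, for every term, a Hecke deformation of \(L(H,\sigma)\) with a triangular bar structure, (b) run Lemma~\ref{lem:canonicalalgorithm}, and (c) decide which terms admit an integral \(W\)-graph specializing to \(L(H,\sigma)\).

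\textbf{Step (a), parabolic terms.} The terms \(B_\tau,E_\tau,C\) have standard parabolic sources \(\langle s_0,s_1\rangle,\langle s_0,s_2\rangle,P\). The ambient twist restricts to a linear character of the parabolic, so I would take the parabolically induced Hecke module \(\HH(W)\otimes_{\HH(W_J)}\) of the corresponding one-dimensional module, in which each \(T_s\) acts by \(v\) or \(-v^{-1}\). This is Deodhar's setting together with the induction bar \(\overline{h\otimes m}=\bar h\otimes\bar m\), and it specializes to the natural lattice.

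\textbf{Step (a), QP terms.} For \(D\) (source \(C_1\)), \(A_\tau\) (source the QP preimage \(N_3\rtimes\langle(12)\rangle\)), \(F_\tau\) (source \(K\)) and \(L_\pm\) (source \(W^+\)), the untwisted and \(\om\)-twisted versions are \(M(X)\) and \(N(X)\) of Proposition~\ref{prop:barinput}. For a mixed twist \(\tau\in\{\ee,\dd\}\) on a small QP set \(X\), I would define the module on the signed coset basis by \eqref{eq:ordinaryM}, with two modifications. First, the fixed-point scalar at \(x\) is \(v\) or \(-v^{-1}\) according to \(\tau\) on \(\Stab(x)\). Second, the moving generators carry the coset sign change. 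I would then verify the quadratic and braid relations directly; \(X\) has at most \(12\) points. The bar is then defined by the path formula \eqref{eq:pathbar} from the minimum, and its well-definedness and involutivity are checked on the finite set. Triangularity with respect to the QP Bruhat order follows from the shape of \(T_s^{-1}\). Lemma~\ref{lem:canonicalalgorithm} then produces the canonical basis, and \eqref{eq:canonicalweights} produces the candidate weights. For \(\mathscr N_3,\mathscr N_4,\mathscr N_5\), their \(\om\)-twists and the perfect families, I would check that these weights satisfy \eqref{eq:Wgraph-def}. Counting, \(8+6=14\) families are then positive.

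\textbf{Step (c), the obstruction.} This is the main step. Reading off the ten remaining families, every one contains \(F_\ee\), \(F_\dd=\om F_\ee\), or \(L_-\), while the six good ones use only \(F_1,F_\om,L_+\). So it suffices to prove that these three natural lattices are not specializations of integral \(W\)-graph modules.

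The invariant I would use is this: for \(s\in S\), whether a \(\Z W\)-lattice restricted to \(\Z\langle s\rangle\) is a sum of trivial and sign lattices, or contains a free summand \(\Z[C_2]\). This is detected by the rank of \(s-1\) modulo \(2\). In an integral \(W\)-graph at \(v=1\), \(s\) fixes each \(C_y\) with \(s\notin\tau(y)\) and sends \(C_x\) to \(-C_x+\sum\omega(x,y)C_y\) otherwise. Hence the number of free summands for \(s\) is controlled by the parity of the weights.

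For \(L_-=\dd+\ee\) the graph has two vertices, with \(\tau=\{s_1,s_2\}\) and \(\tau=\{s_0\}\). The \(I_2(4)\)-relation for \(\langle s_0,s_1\rangle\) says that a reducible two-dimensional rank-two representation forces \(\omega(x,y)\omega(y,x)=0\), since otherwise the representation is \(\rho\) with \(c^2=2\). Thus one of \(s_0,s_1\) acts split-semisimply on the specialization. In the natural lattice, however, both \(s_0\) and \(s_1\) lie outside \(W^+\) and act as signed swaps of the two coset vectors, giving \(\Z[C_2]\) for each. This is a contradiction.

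For \(F_\ee\) (four points, constituents \(\ee\) and \(R\)) I would run the same scheme. First classify the possible integral \(W\)-graphs affording \(\ee+R\): their labels are forced by the eigenvalue multiplicities of each \(s\), and the weights are constrained by the rank-two braid relations. Then compare the resulting mod-\(2\) ranks of \(s-1\) with those of the signed permutation action on \(W/K\). I expect this to be the hardest part, because the integral graphs for \(\ee+R\) are not unique. If the simple \(\langle s\rangle\)-invariant does not separate them, I would fall back on a finer invariant, namely the isomorphism class of the restriction to a rank-two parabolic \(\Z\langle s_i,s_j\rangle\), computed modulo \(2\) as an \(\F_2\)-module.
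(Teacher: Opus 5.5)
\textbf{The gap is in Step (a), at exactly the three terms that matter.} For \(F_\ee\), \(F_\dd\) and \(L_-\), your signed-coset version of \eqref{eq:ordinaryM} does not satisfy the Hecke relations, so there is no module to put a bar on. The culprit is the commuting pair \(s_0,s_2\). In each of these lattices, both reflections interchange the same two points: the points of heights \(2,3\) in \(W/K\), respectively the two points of \(W/W^+\). They do so with opposite coset signs, since \(\ee(s_0)\ee(s_2)=\dd(s_0)\dd(s_2)=-1\), and no rescaling of the basis removes this relative sign. On that invariant two-dimensional block your recipe gives \(T_0=\begin{pmatrix}0&-1\\-1&a\end{pmatrix}\) and \(T_2=\begin{pmatrix}0&1\\1&a\end{pmatrix}\), or the reverse. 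These satisfy \(T_0T_2-T_2T_0=\pm\begin{pmatrix}0&-2a\\2a&0\end{pmatrix}\neq0\); for \(L_-\) the length-four braid with \(T_1\) also fails. Your recipe is fine for \(A_\tau\) and for the twists of \(D\), which reduce to \(M/N\), but any deformation of the three natural lattices above must have \(T_s\)-images leaving the naive block. That is what the correction terms in \eqref{eq:B3T2} and the choice \(T_1=T_2=aI-A=-A^{-1}\) in \eqref{eq:B3two} achieve. The missing idea is how to obtain such a deformation together with a bar. The paper realizes \(L(K,\ee)\) and \(L(K,\dd)\) as the kernel \(Q(u)\) of the bar-compatible surjection \(\HH(W)\otimes_{\HH(P)}\A_u\to V_4(u)\), \(m_P\mapsto m_K\). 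This kernel is automatically a free Hecke submodule stable under the restricted bar, and only then do Lemma~\ref{lem:canonicalalgorithm} and the explicit canonical bases apply. Without this, or the explicit model for \(L_-\), the first sentence of the theorem is unproved for all ten families you later obstruct.

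\textbf{Step (c) is right for \(L_-\) but not carried out for \(F_\ee,F_\dd\).} For \(L_-\), commutation of \(T_0,T_2\) alone already kills both edge weights. For \(F_\ee\), however, your primary single-reflection invariant cannot work. On \(L(K,\ee)\) one has \(\rank_{\F_2}(s_0-I)=2\), which equals the bound \(\min(n_+(s_0),n_-(s_0))=2\) valid for any lattice affording \(\ee+R\). Moreover \(s_1,s_2\) have rank one, also within their bounds. What closes the argument is the two-reflection statement of Lemma~\ref{lem:rankobstruction}. For commuting \(s,t\), the commutation relation forces edges between vertices labelled only \(s\) and only \(t\) to vanish. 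This gives \(\rank_{\F_2}(s-I)\leq n_{++}\) when \(n_{--}=0\), and \(\leq n_{--}\) when \(n_{++}=0\). With joint multiplicities \((1,1,2,0)\) for \(L(K,\ee)\) and \((0,2,1,1)\) for \(L(K,\dd)\), both cases give \(2\leq1\), a contradiction. Finally, the word ``precisely'' also needs Proposition~\ref{prop:intrinsicB3}: every allowed realization of \(F_\ee,F_\dd,L_-\) has the same natural lattice. Otherwise another choice of \((H,\sigma)\) realizing the same character could evade the obstruction.
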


\subsubsection{The ordinary and parabolic summands}

The characters \(F_1,F_\om,L_+\) are respectively the ordinary four-point \(M\), four-point \(N\), and two-point QP modules.
The terms \(C,\dd C,D,\dd D\) have ordinary \(M/N\) or parabolic \(M/N\) constructions.
Each \(B_\tau\) or \(E_\tau\) is induced from a one-dimensional Hecke character of the displayed standard parabolic subgroup; apply parabolic bar and \(W\)-graph induction.
For \(A_\tau\), inflate the three-point symmetric-group parabolic module: \(T_1,T_2\) have the ordinary \(S_3\) action with its sign choice, while \(T_0\) is the scalar \(v\) or \(-v^{-1}\) determined by \(\tau(s_0)\).
The length-four braid is automatic for this scalar action.
These modules have their usual bars and canonical graphs.
Finally the four linear characters have one-vertex graphs.
Thus only
\[
 F_\ee=\ee+R,\qquad F_\dd=\dd+\om R,\qquad L_-=\dd+\ee
\]
require a new natural construction.

\begin{example}[The four-point canonical graph]
For \(F_1\), the four standard heights are \(0,1,2,3\).
Its canonical labels and nonzero weights are
\[
 \tau(0)=\{s_0\},\quad\tau(1)=\{s_1\},\quad
 \tau(2)=\{s_0,s_2\},\quad\tau(3)=\emptypar,
\]
with weight one on \(0\leftrightarrow1\), \(1\leftrightarrow2\), and \(2\to3\).
The graph is shown in Figure~\ref{fig:B3graph}.
\end{example}

\begin{figure}[htbp]
\centering
\begin{tikzpicture}[>=Stealth,every node/.style={font=\small}]
\node[circle,draw] (a) at (0,0) {\(0\)};
\node[circle,draw] (b) at (3,0) {\(1\)};
\node[circle,draw] (c) at (6,0) {\(2\)};
\node[circle,draw] (d) at (9,0) {\(3\)};
\draw[<->] (a)--node[above]{\(1\)}(b);
\draw[<->] (b)--node[above]{\(1\)}(c);
\draw[->] (c)--node[above]{\(1\)}(d);
\node[below=7mm of a] {\(\{s_0\}\)};
\node[below=7mm of b] {\(\{s_1\}\)};
\node[below=7mm of c] {\(\{s_0,s_2\}\)};
\node[below=7mm of d] {\(\emptypar\)};
\end{tikzpicture}
\caption{The canonical \(W\)-graph for the natural four-point term \(F_1\), in convention \eqref{eq:Wgraph-def}.}
\label{fig:B3graph}
\end{figure}

\subsubsection{Two four-dimensional kernels}

Let \(u\) be either \(v\) or \(-v^{-1}\); thus \(u^2=1+au\) and \(\bar u=u^{-1}\).
Define
\[
 V_8(u)=\HH(W)\otimes_{\HH(P)}\A_u,\qquad
 V_4(u)=
 \begin{cases}M(W/K),&u=v,\\N(W/K),&u=-v^{-1},\end{cases}
\]
where \(T_1,T_2\) act on \(\A_u\) by \(u\).
The minimum vector of \(V_4(u)\) satisfies these two scalar relations.
Consequently there is a surjective Hecke homomorphism
\begin{equation}
 \pi_u:V_8(u)\longrightarrow V_4(u),\qquad m_P\longmapsto m_K .
 \label{eq:B3surjection}
\end{equation}
Both bars fix these minimum vectors, so the map commutes with bar.
Four standard vectors of \(V_8(u)\) map to the four standard vectors of \(V_4(u)\).
They give an \(\A\)-linear section, and
\[
 Q(u)=\Ker\pi_u
\]
is free of rank four.
The section is only asserted to be \(\A\)-linear.

At \(v=1\), the map for \(u=v\) identifies the eight sign vertices in antipodal pairs.
Its kernel, generated by the antipodal differences, is \(L(K,\ee)\).
The corresponding signed map for \(u=-v^{-1}\) has kernel \(L(K,\dd)\).
More explicitly, number the eight cosets in minimum-length order with the first four mapping to the four standard quotient vectors.
An ordered kernel basis \(\kappa_0,\ldots,\kappa_3\) specializes as
\[
 \kappa_i(1)=m_{4+i}(1)-m_{3-i}(1),\qquad 0\leq i\leq3 .
\]
These four vectors are a signed coset basis of the stated induced lattice.
Their absolute quotient action has QP heights \(0,1,2,3\).

The full Hecke action on this basis is
\begin{align}
 T_0\kappa_0&=\kappa_1,&
 T_0\kappa_1&=\kappa_0+a\kappa_1,\notag\\
 T_0\kappa_2&=\kappa_3,&
 T_0\kappa_3&=\kappa_2+a\kappa_3,\label{eq:B3T0}\\
 T_1\kappa_0&=u\kappa_0,&
 T_1\kappa_1&=\kappa_2,\notag\\
 T_1\kappa_2&=\kappa_1+a\kappa_2,&
 T_1\kappa_3&=u\kappa_3,\label{eq:B3T1}
\end{align}
and
\begin{align}
 T_2\kappa_0&=a\kappa_0-\kappa_1,&
 T_2\kappa_1&=-\kappa_0,\notag\\
 T_2\kappa_2&=u\kappa_2-a\kappa_0-au\kappa_1,&
 T_2\kappa_3&=u\kappa_3-au\kappa_0-au^2\kappa_1 .
 \label{eq:B3T2}
\end{align}
These formulas may alternatively be taken as explicit definitions of the two modules.
Reduction using \(u^2=1+au\) verifies each quadratic relation and the braids of lengths \(4,3,2\).
The kernel construction proves the natural-lattice identification, and accounts for the correction terms in \eqref{eq:B3T2}.

\begin{proposition}\label{prop:B3kernelcanonical}
The bars on \(Q(u)\) are
\begin{equation}
 \bar\kappa_0=\kappa_0,\qquad
 \bar\kappa_i=\kappa_i-a\sum_{j=0}^{i-1}u^{j+1-i}\kappa_j
 \quad(1\leq i\leq3).
 \label{eq:B3kernelbar}
\end{equation}
For \(u=v\), the canonical basis is
\begin{equation}
 C_i=\sum_{j=0}^i v^{j-i}\kappa_j .
 \label{eq:B3kernelCplus}
\end{equation}
For \(u=-v^{-1}\), it is
\begin{equation}
 C_0=\kappa_0,\qquad C_i=\kappa_i+v^{-1}\kappa_{i-1}\quad(1\leq i\leq3).
 \label{eq:B3kernelCminus}
\end{equation}
\end{proposition}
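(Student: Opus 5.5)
The plan is to verify the bar formula \eqref{eq:B3kernelbar} from the defining properties of a bar operator, and then read off the canonical bases with Lemma~\ref{lem:canonicalalgorithm}.

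\emph{Step 1: the bar.} The surjection \eqref{eq:B3surjection} commutes with the bars, so the bar of \(V_8(u)\) preserves \(Q(u)=\Ker\pi_u\). It therefore restricts to an antilinear involution of \(Q(u)\) compatible with the Hecke action. A compatible bar is determined by its value on a cyclic generator. Since \(\kappa_i=T_0\kappa_{i-1}\) or \(T_1\kappa_{i-1}\) along the chain \(\kappa_0\to\kappa_1\to\kappa_2\to\kappa_3\) (by \eqref{eq:B3T0}--\eqref{eq:B3T1}), it suffices to show \(\bar\kappa_0=\kappa_0\). Then we compute \(\bar\kappa_1=T_0^{-1}\kappa_0\), \(\bar\kappa_2=T_1^{-1}\bar\kappa_1\), and \(\bar\kappa_3=T_0^{-1}\bar\kappa_2\), using \(T_s^{-1}=T_s-a\).

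To get \(\bar\kappa_0=\kappa_0\), I would use the height-zero element of the kernel. The vector \(\kappa_0\) lies in the \(u\)-eigenspace of \(T_1\) and satisfies the \(T_2\) relation in \eqref{eq:B3T2}. Its bar lies in \(\kappa_0+\sum_{j>0}\A\kappa_j\) by triangularity inherited from \(V_8(u)\). Comparing with the relations \(T_1\bar\kappa_0=u^{-1}\)-twisted \(\bar\kappa_0\), i.e.\ applying \(\bar{T_1}\) to \(\bar\kappa_0\), should force the higher coefficients to vanish. Alternatively, one can check directly that \(\kappa_0\) is a \(v\)-free combination of two minimum-type standard vectors whose bars are computable by \eqref{eq:pathbar}.

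Once the bar is known on \(\kappa_0\), the computation is short. We get
\[
\bar\kappa_1=(T_0-a)\kappa_0=\kappa_1-a\kappa_0 .
\]
Next,
\[
\bar\kappa_2=(T_1-a)(\kappa_1-a\kappa_0)=\kappa_2-a\kappa_1-a(u-a)\kappa_0 ,
\]
and \(u-a=u^{-1}\), matching \eqref{eq:B3kernelbar}. The same reduction with \(u^2=1+au\) gives the \(\kappa_3\) formula. As a consistency check, I would verify that \eqref{eq:B3kernelbar} is an involution and commutes with \(T_2\), using \eqref{eq:B3T2}.

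\emph{Step 2: the canonical bases.} Apply \eqref{eq:canonicalalgorithm} inductively. For \(u=v\), the coefficient of \(\kappa_j\) in \(\bar\kappa_i-\kappa_i\) is \(-(v-v^{-1})v^{j+1-i}\). Its negative-degree part, expressed in the already constructed \(C_j\), telescopes to \(\sum_{j<i}v^{j-i}\kappa_j\). The cleaner route is to simply verify that the displayed \(C_i\) is bar-invariant and has lower coefficients in \(v^{-1}\Z[v^{-1}]\); uniqueness in Lemma~\ref{lem:canonicalalgorithm} then finishes. Bar invariance reduces to the identity
\[
\sum_{j\le i}v^{i-j}\bar\kappa_j=\sum_{j\le i}v^{j-i}\kappa_j ,
\]
which is a finite geometric-sum check.

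For \(u=-v^{-1}\), we have \(u^{j+1-i}=(-v)^{i-j-1}\). The check is that \(\kappa_i+v^{-1}\kappa_{i-1}\) is fixed, i.e.\ \(\bar\kappa_i+v\bar\kappa_{i-1}=\kappa_i+v^{-1}\kappa_{i-1}\). Comparing coefficients of \(\kappa_j\) gives
\[
-a(-v)^{i-j-1}-va(-v)^{i-j-2}=0
\]
for \(j<i-1\), and \(-a+v=v^{-1}\) for \(j=i-1\).

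The main obstacle is Step 1's base case, establishing \(\bar\kappa_0=\kappa_0\) (equivalently, that the restricted bar is triangular in the stated order). The coefficient verifications are routine.
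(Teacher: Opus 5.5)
Your propagation along the chain \(\kappa_1=T_0\kappa_0\), \(\kappa_2=T_1\kappa_1\), \(\kappa_3=T_0\kappa_2\) is correct, and so are the canonical-basis checks in Step~2. The gap is the base case \(\bar\kappa_0=\kappa_0\) for the restricted bar. You flag it but do not prove it, and the first argument you sketch for it fails.

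Triangularity in \(V_8(u)\) refers to the standard basis \(m_0,\ldots,m_7\). It puts \(\bar\kappa_0-\kappa_0\) among strictly \emph{lower} standard vectors; it does not give \(\bar\kappa_0\in\kappa_0+\sum_{j>0}\A\kappa_j\). Even if higher \(\kappa_j\) were allowed, the \(T_1\)-relation could not remove them. It only says \(T_1\bar\kappa_0=u\bar\kappa_0\), and by \eqref{eq:B3T1} the vectors \(\kappa_3\) and \(\kappa_1+u\kappa_2\) are also \(u\)-eigenvectors of \(T_1\).

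Your second suggestion is not carried out and misdescribes \(\kappa_0\). It is not built from minimum vectors: \(\kappa_0=(T_2-T_0)m_2\) is the difference of the two height-three standard vectors of \(V_8(u)\). Both are sent by \(\pi_u\) to the top vector of \(V_4(u)\), and one checks that this choice reproduces \eqref{eq:B3T0}--\eqref{eq:B3T2}. Their bars carry lower terms, and showing that these cancel is exactly the missing step.

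With this identification the step is short. Used in the correct direction, triangularity gives \(\bar\kappa_0-\kappa_0\in Q(u)\cap\operatorname{span}_\A(m_0,m_1,m_2)\). This intersection is zero because \(\pi_u\) sends \(m_0,m_1,m_2\) to distinct standard vectors of \(V_4(u)\).

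Alternatively, compute directly. First, \(\bar m_2=(T_1-a)(T_0-a)m_0=m_2-am_1-au^{-1}m_0\). Next, \(T_2-T_0\) is bar invariant and annihilates \(am_1+au^{-1}m_0\), because \(T_2\) acts by \(u\) on \(m_0,m_1\). Hence \(\bar\kappa_0=(T_2-T_0)\bar m_2=\kappa_0\).

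The paper also allows a route with no base case: take \eqref{eq:B3kernelbar} as the definition and substitute it into the twelve generator formulas to check involutivity and compatibility. Your ``consistency check'' is most of that. Besides \(T_2\), it needs only the \(T_1\)-compatibility on \(\kappa_0\) and \(\kappa_3\); the remaining \(T_0,T_1\) cases follow from your chain and the quadratic relations.
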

\begin{proof}
Restrict the bar in \eqref{eq:B3surjection}, or substitute \eqref{eq:B3kernelbar} into the twelve generator formulas.
One obtains \(\bar{\bar\kappa_i}=\kappa_i\) and
\(\overline{T_s\kappa_i}=(T_s-a)\bar\kappa_i\).
Substitution in \eqref{eq:B3kernelCplus} and \eqref{eq:B3kernelCminus} gives bar-fixed vectors.
Their leading terms and negative-power triangularity are immediate.
Lemma~\ref{lem:canonicalalgorithm} gives uniqueness.
\end{proof}

\begin{example}[The last canonical vector]
For \(F_\ee\), the last vector is
\[
 C_3=\kappa_3+v^{-1}\kappa_2+v^{-2}\kappa_1+v^{-3}\kappa_0.
\]
For \(F_\dd\), it is \(C_3=\kappa_3+v^{-1}\kappa_2\).
Both are canonical relative to their stated bars, despite the different lower supports.
\end{example}

\subsubsection{The two-dimensional signed term}

For \(L_-\), on \(m_0,m_1\), take
\begin{equation}
 A=\begin{pmatrix}0&1\\1&a\end{pmatrix},\qquad
 T_0=A,\qquad T_1=T_2=aI-A .
 \label{eq:B3two}
\end{equation}
The matrices satisfy their quadratic relations.
They commute and \(A(aI-A)=-I\), proving the length-four and length-two braids; the remaining braid is immediate since \(T_1=T_2\).
At \(v=1\), \(s_0\) exchanges the two basis vectors and \(s_1=s_2=-s_0\).
This is exactly \(L(W^+,\dd)\) in the coset basis \(1,s_0\).
The bar and canonical basis are
\begin{equation}
 \bar m_0=m_0,\qquad \bar m_1=m_1-am_0,\qquad
 C_0=m_0,\qquad C_1=m_1+v^{-1}m_0 .
 \label{eq:B3twobar}
\end{equation}
Direct substitution proves all the required identities.
For \(L_+\), all three generators equal \(A\), giving the ordinary two-point graph.

\subsubsection{An intrinsic integral \texorpdfstring{\(W\)}{W}-graph obstruction}

\begin{lemma}[Commuting-reflection rank bound]\label{lem:rankobstruction}
Let \(s,t\) be commuting simple reflections.
Suppose an integral \(W\)-graph module specializes to a lattice \(L\).
Let \(n_{++},n_{+-},n_{-+},n_{--}\) be the dimensions over \(\Q\) of the joint eigenspaces of \(s,t\) on \(\Q\otimes L\).
If \(n_{--}=0\), then
\[
 \rank_{\F_2}(s-I)\leq n_{++}.
\]
If \(n_{++}=0\), then
\[
 \rank_{\F_2}(s-I)\leq n_{--}.
\]
\end{lemma}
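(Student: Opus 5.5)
The plan is to compute everything in the vertex basis \(\{C_x\}\) of the \(W\)-graph module and specialize at \(v=1\). Setting \(v=1\) in \eqref{eq:Wgraph-def} gives, on \(L\),
\[
 (s-I)C_x=0\ \ (s\notin\tau(x)),\qquad
 (s-I)C_x=-2C_x+\sum_{s\notin\tau(y)}\omega(x,y)C_y\ \ (s\in\tau(x)),
\]
and similarly for \(t\). Since all \(\omega(x,y)\) are integers, \(\{C_x\}\) is a \(\Z\)-basis of \(L\), and the reduction of \(s-I\) modulo \(2\) is a well-defined matrix over \(\F_2\). The proof then has two steps: identify the \(n_{\pm\pm}\) with counts of vertex types, and bound the \(\F_2\)-image of \(s-I\) by a commutation argument.

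\textbf{Step 1: counting vertex types.} Sort the vertices into four classes according to \(\tau(x)\cap\{s,t\}\). I will show that \(n_{++}\) is the number of vertices with \(s,t\notin\tau(x)\), and \(n_{--}\) is the number with \(s,t\in\tau(x)\).
\begin{itemize}
\item Restricting to \(\HH(W_{\{s,t\}})\) gives a module over a Hecke algebra of type \(A_1\times A_1\). By Tits deformation, it is a sum of the four linear characters \(T_sT_t\mapsto v^2,-1,-1,v^{-2}\), with multiplicities \(n_{++},n_{+-},n_{-+},n_{--}\).
\item Compute the trace of \(T_sT_t\) from the graph.
 \begin{itemize}
 \item A vertex with neither label has diagonal entry \(v^2\).
 \item A vertex with both labels has diagonal entry \(v^{-2}\). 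The edge terms cannot return to \(x\): they pass through a vertex \(y\) with \(t\notin\tau(y)\), and \(T_s\) applied to \(C_y\) produces \(C_x\) only when \(s\notin\tau(x)\), which fails here.
 \item A mixed vertex contributes \(-1\) plus an integer constant coming from \(\omega(x,y)\omega(y,x)\).
 \end{itemize}
\item Comparing the coefficients of \(v^{2}\) and \(v^{-2}\) with the character expansion gives the two claimed counts.
\end{itemize}

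\textbf{Step 2: the commutation argument.} Since \(st=ts\), every vector \((s-I)C_x\) with \(t\notin\tau(x)\) is fixed by \(t\). Take such an \(x\) with \(s\in\tau(x)\) and apply \(t-I\) to \((s-I)C_x\).
\begin{itemize}
\item For a vertex \(y\) with \(t\in\tau(y)\), the only source of a \(C_y\)-coefficient in \((t-I)C_z\) is the diagonal term \(-2C_z\) with \(z=y\). Vertices \(z\) with \(t\notin\tau(z)\) give zero.
\item The coefficient of \(C_y\) in \((t-I)(s-I)C_x\) is therefore \(-2\omega(x,y)\). It must vanish, so \(\omega(x,y)=0\) whenever \(t\in\tau(y)\).
\item Hence for such \(x\), the vector \((s-I)C_x\) is \(-2C_x\) plus a combination of vertices \(y\) with \(s\notin\tau(y)\) and \(t\notin\tau(y)\).
\end{itemize}

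\textbf{The two cases.}
\begin{itemize}
\item If \(n_{--}=0\), every \(x\) with \(s\in\tau(x)\) has \(t\notin\tau(x)\). Modulo \(2\), all nonzero columns of \(s-I\) therefore lie in the span of the \(n_{++}\) vertices with neither label. This gives \(\rank_{\F_2}(s-I)\leq n_{++}\).
\item If \(n_{++}=0\), the vertices with neither label do not exist. For a mixed \(x\) (with \(s\in\tau(x)\), \(t\notin\tau(x)\)), Step 2 forces \((s-I)C_x=-2C_x\) integrally, which is \(0\) modulo \(2\). Only the columns indexed by vertices with both labels survive, so \(\rank_{\F_2}(s-I)\leq n_{--}\).
\end{itemize}

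The main obstacle is Step 1, specifically the bookkeeping of the diagonal of \(T_sT_t\). The extremal powers \(v^{\pm2}\) must be attained only by the pure vertex types, and the edge contributions must produce only constants. This has to be checked carefully against the convention \eqref{eq:Wgraph-def}, because negative weights are allowed.
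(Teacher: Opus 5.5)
Your proof is correct and has the same architecture as the paper's. You sort vertices by their $\{s,t\}$-labels, use $st=ts$ to kill the weights $\omega(x,y)$ from $s$-only to $t$-only vertices, identify the numbers of vertices with neither label and with both labels as $n_{++}$ and $n_{--}$, and read off the mod-$2$ block structure of $s-I$. The differences are local. You kill these weights at $v=1$ via the coefficient $-2\omega(x,y)=0$ rather than via $-v^{-1}b=vb$ in the Hecke algebra. You also obtain the two vertex counts from the $v^{\pm2}$ coefficients of $\tr(T_sT_t)$ rather than from simultaneous triangularity. As a result, your counting step does not depend on the edge analysis, and you need only the one direction of edge vanishing that actually enters $s-I$.
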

\begin{proof}
Use the transposed \(W\)-graph convention in which a vertex carrying \(s\) has scalar \(-v^{-1}\), while a vertex not carrying \(s\) has scalar \(v\) plus edges to vertices carrying \(s\).
This transposes the matrices from \eqref{eq:Wgraph-def} and leaves the ranks and joint traces unchanged.
Group the vertices by their \(s,t\) labels \(00,10,01,11\).

An edge from \(01\) to \(10\) has weight \(b\).
At this matrix entry the commutation relation \(T_sT_t=T_tT_s\) gives
\(-v^{-1}b=vb\), so \(b=0\).
There is no extra contribution through a third vertex at this entry: after an \(s\)-edge leaves \(01\), it reaches \(10\) or \(11\), and a subsequent \(t\)-edge cannot end at \(10\); the reversed product has the analogous restriction.
The reverse edge is excluded identically.
After these entries vanish, the two generator matrices are simultaneously triangular with respect to inclusion of the two-bit labels.
Their diagonal entries at \(v=1\) give the four joint eigenvalue multiplicities.
Thus the vertex counts are the stated \(n_{\pm\pm}\).

Modulo two, every diagonal entry of \(s-I\) vanishes.
If no \(11\) vertex exists, its only possible nonzero block is from \(00\) to \(10\), whose rank is at most \(n_{++}\).
If no \(00\) vertex exists, its only possible nonzero block is from \(01\) to \(11\), of rank at most \(n_{--}\).
This proves both assertions.
\end{proof}

Apply the lemma to \(s_0,s_2\) in \(B_3\).
The natural signed-coset matrices give
\[
\begin{array}{c|c|c|c}
\text{lattice}&(n_{++},n_{+-},n_{-+},n_{--})&
 \rank_{\F_2}(s_0-I)&\text{bound}\\ \hline
L(W^+,\dd)&(0,1,1,0)&1&0\\
L(K,\ee)&(1,1,2,0)&2&1\\
L(K,\dd)&(0,2,1,1)&2&1 .
\end{array}
\]
Each row violates the bound.
For example, on \(L(K,\ee)\), \(s_0\) permutes the four cosets in two pairs, so its rank modulo two is two.
The joint minus-minus space is zero and the plus-plus space is one-dimensional.
The joint dimensions can be recovered without choosing rational eigenvectors:
\[
 n_{\epsilon\eta}=\tfrac14\bigl(\rank_\Z L+
 \epsilon\,\tr(s)+\eta\,\tr(t)+\epsilon\eta\,\tr(st)\bigr),
 \qquad\epsilon,\eta\in\{1,-1\}.
\]
The signed-coset matrices in \cite{ZhangData} verify these four traces and each rank by exact arithmetic.
These traces and the ranks modulo two are invariant under integral changes of basis.
Thus no choice of Hecke deformation or integral \(W\)-graph basis can remove the obstruction.

\begin{proposition}\label{prop:intrinsicB3}
Any allowed realization of a character \(F_\ee,F_\dd,L_-\) has the respective natural lattice used above, up to integral \(W\)-isomorphism.
In particular the obstruction depends only on these inducing terms, not on the selected subgroup representatives.
\end{proposition}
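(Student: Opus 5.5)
The plan is to identify, for each of the three characters, every allowed realization \((J,H,\sigma)\) that produces it, and then to show that all of them give the same natural lattice up to integral \(W\)-isomorphism. First, Proposition~\ref{prop:finite} and the exhaustive list of forty QP MF columns used in the proof of Theorem~\ref{thm:B3list} give a finite list of realizations of the columns \(\ee+R\), \(\dd+\om R\) and \(\dd+\ee\). Working up to conjugacy of \(H\) inside \(W_J\), I expect only a few triples per character. For \(L_-=\dd+\ee\), Frobenius reciprocity forces \([W:H]=2\), and the induced character must contain \(\dd\) and \(\ee\). So \(H=\Ker(\dd\ee)=\Ker\om=W^+\) and \(\sigma|_H=\dd|_{W^+}=\ee|_{W^+}\), which gives a unique lattice. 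For \(F_\ee\) and \(F_\dd\) the degree is \(4\), so \(|H|=12\). Since \(\ee+R\) contains no linear character other than \(\ee\), the subgroup \(H\) is an index-four subgroup on which \(\sigma\) agrees with \(\ee\). I will list the order-\(12\) subgroups of \(B_3\) that are QP, contained in a parabolic of corank at most one, and have multiplicity-free induced character. The candidates are \(K=S_3\times\langle z\rangle\) and its conjugates, and possibly subgroups of \(\langle s_1,s_2\rangle\times\langle z\rangle\)-type or other order-\(12\) groups. The archive records of provenance for these columns will settle this.

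Second, the isomorphism. Two realizations \(\Ind_H\sigma\) and \(\Ind_{H'}\sigma'\) with \(H'=gHg^{-1}\) and \(\sigma'=\sigma^g\) are integrally isomorphic via \(w\otimes1\mapsto wg^{-1}\otimes1\). Conjugacy in \(W\) therefore suffices, even when the conjugating element lies outside \(W_J\), and so the QP basepoint choice is irrelevant. The remaining case is a genuinely non-conjugate pair \((H,\sigma|_H)\ne(H',\sigma'|_{H'})\) with the same character. For such a pair I would compare the two lattices directly through integral invariants that any isomorphism must preserve. These are the \(\F_2\)-ranks of \(s-I\) for each simple \(s\), the joint trace data of Lemma~\ref{lem:rankobstruction}, and, more decisively, the \(\F_2W\)-module structure of \(L/2L\), for example its fixed points under \(s_0\) and under \(z\). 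If the invariants agree, I would write down an explicit \(\Z W\)-isomorphism between the signed coset bases, by matching the unique \(\sigma\)-eigenline for a common index-two subgroup.

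The weaker conclusion is all that the obstruction actually needs, and I will record it in case lattices differ: every realization violates the bound of Lemma~\ref{lem:rankobstruction}. Since ranks and traces are computed directly from the signed coset matrices, I can check this for each listed realization.

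I expect the main obstacle to be the exhaustiveness of the realization list for \(F_\ee\) and \(F_\dd\): proving that no order-\(12\) QP subgroup outside the conjugacy class of \(K\) gives the same induced character. I would first reduce by character theory. Since \(\langle\Ind_H\sigma,\one\rangle=0\) while \(\ee\) and \(\dd\) are linear, \(\sigma|_H\) must equal \(\ee|_H\), respectively \(\dd|_H\), and these restrictions must differ from every other linear character on \(H\). In particular \(H\) contains elements detecting \(\om\) and \(\dd\). That pins \(H\) to a complement-type subgroup containing \(z\), which is central with \(\ee(z)=-1\) and \(\dd(z)=1\), and a Sylow-\(3\) subgroup. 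A short enumeration of order-\(12\) subgroups of \(B_3\) containing \(z\), namely \(\langle z\rangle\times\) an order-\(6\) subgroup, then finishes the argument.
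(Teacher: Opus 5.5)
Your treatment of \(L_-\) is right, and so is your observation that conjugation by any \(g\in W\), not only by elements of \(W_J\), identifies the natural lattices. The gap is in the \(F_\ee,F_\dd\) case. You correctly derive that \(\sigma|_H=\ee|_H\) (respectively \(\dd|_H\)) and that \(\ee,\dd,\om\) are all nontrivial on \(H\). But the sentence ``that pins \(H\) to a complement-type subgroup containing \(z\)'' is not an argument. Your closing enumeration runs only over order-\(12\) subgroups that already contain \(z\).

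This unproved step is exactly the exhaustiveness you flag as the main obstacle. As written, the proposal therefore still rests on the archive or on your contingency plans. Those plans do not replace the missing step. The fallback that every realization violates Lemma~\ref{lem:rankobstruction} would not prove the proposition, since the proposition asserts an integral isomorphism of lattices.

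The step is short in your own framework. Suppose \(z\notin H\). Then \(H\) maps isomorphically onto an order-\(12\) subgroup of \(W/\langle z\rangle\cong S_4\), that is, onto \(A_4\). Since \(A_4\) has no subgroup of index two, every \(\{\pm1\}\)-valued character of \(W\) is trivial on \(H\). This contradicts your observation that \(\ee|_H\) (respectively \(\dd|_H\)) is nontrivial.

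The paper reaches \(z\in H\) differently and more economically, without the QP condition, the archive, or any list of order-\(12\) subgroups. Multiplying by the unique linear constituent turns either \(F\) character into the permutation character \(\Ind_H^W\one=1+\ee R\). Its value at \(z\) is \(1+(-1)(-3)=4\), which equals the degree. So \(z\) acts trivially on \(W/H\) and lies in every point stabilizer.

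From there the two routes coincide. \(H/\langle z\rangle\) is an order-six subgroup of \(S_4\), hence a point stabilizer \(S_3\), so \(H\) is conjugate to \(K\). In particular there are no non-conjugate realizations, and the invariant comparison you planned for that case is never needed.
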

\begin{proof}
Each degree is two or four, below the smallest index six of a proper parabolic subgroup of \(B_3\).
Thus the source must be the full group.
If an induced linear character contains a linear constituent \(\tau\), Frobenius reciprocity forces its inducing character on \(H\) to be \(\tau|_H\).
For \(L_-=\dd+\ee\), the index-two subgroup is therefore \(\Ker(\dd\ee)=W^+\).
For either \(F\) character, divide by its unique linear constituent.
The resulting permutation character is \(1+\ee R\), whose kernel is \(\langle z\rangle\).
The kernel of a transitive permutation character is the intersection of its point stabilizers, so \(\langle z\rangle\leq H\).
Since \(W/\langle z\rangle\cong S_4\), the image of \(H\) has order six.
Every order-six subgroup of \(S_4\) has a normal subgroup of order three and is contained in its order-six normalizer, a point stabilizer \(S_3\).
Thus the image of \(H\) is conjugate to such a stabilizer.
Hence \(H\) is conjugate to \(K\).
Conjugation identifies the corresponding induced lattices over \(\Z W\).
\end{proof}

\begin{proof}[Proof of Theorem~\ref{thm:B3natural}]
The ordinary and parabolic constructions, the two kernels, and \eqref{eq:B3two} supply natural Hecke, bar, and canonical structures for all twenty-six characters used in the twenty-four families.

The three families \(\mathscr N_3,\mathscr N_4,\mathscr N_5\) use only the twenty-three terms with ordinary or parabolic canonical graphs.
Their graphs are the disjoint unions of the summand graphs.
The same is true of their Coxeter-sign duals, using the appropriate dual canonical constructions.
Each of the other five representatives in Table~\ref{tab:B3extra}, and each of its duals, contains \(F_\ee,F_\dd\), or \(L_-\).
Lemma~\ref{lem:rankobstruction} and Proposition~\ref{prop:intrinsicB3} rule out an integral graph on that summand lattice.
This gives exactly six positive and ten negative additional families.
The eight perfect families have their natural canonical graphs, so the total is fourteen.
\end{proof}

\begin{example}[Canonical vectors without an integral graph]
In either four-dimensional kernel,
\[
 T_2C_0=vC_0-C_1.
\]
Thus the canonical vectors do not satisfy the scalar-positive case of
\eqref{eq:Wgraph-def}.
The rank argument above is stronger: it excludes every integral \(W\)-graph basis on this natural lattice, independently of this particular canonical choice.
Nonetheless \eqref{eq:B3kernelbar}--\eqref{eq:B3kernelCminus} give its complete bar and canonical basis.
For the two-dimensional term \(L_-\), Corollary~\ref{cor:B3Gelfand}
gives the commutative finite corner algebra \(\C^2\), while the first
row of the rank-obstruction table excludes an integral \(W\)-graph on
its natural signed-coset lattice.  Thus even the smallest genuinely
twisted term separates the finite Gelfand property from the integral
canonical-graph property.
\end{example}

The negative assertion is termwise: it preserves the direct sum into the specified natural inducing lattices.
The theorem does not identify this requirement with a change of lattice after decomposing the complex character.
Over a splitting field, Corollary~\ref{prop:allHecke} still gives a \(W\)-graph for every one of the twenty-four characters and their prescribed character summands.
\medskip

\noindent\textbf{Declaration of generative AI use.}
During the preparation of this work, OpenAI ChatGPT and Codex were used to assist with literature searches, proof drafting and revision, the development of computational checks, and manuscript preparation.
The computational methods and their verification records are identified in the text and the accompanying archive.
The author is responsible for the content of the submitted manuscript.

\end{document}